\newcommand{\plminul}{0,\pm}
\newcommand{\plmin}{\pm}
\newcommand{\mitdE}{\; \biggm| \;}
\newcommand{\opin}[2]{\mathopen] #1, #2 \mathclose[}
\newcommand{\RO}[3]{\operatorname{Osc}(#1{:}#2{:}#3)}
\newcommand{\quotient}[1]{{\! / \!}_{\textstyle #1}}
\newcommand{\X}[1]{X_{\textstyle \! \mbox{\small $#1$}}}
\newcommand{\Frac}[2]{\mbox{\footnotesize
                            ${\displaystyle \frac{#1}{#2} }$}}
\newcommand{\R}{\mathbb{R}}
\newcommand{\Z}{\mathbb{Z}}
\newcommand{\C}{\mathbb{C}}
\newcommand{\T}{\mathbb{T}}
\newcommand{\dd}{\mathrm{d}}
\newcommand{\ee}{\mathrm{e}}
\newcommand{\ii}{\mathrm{i}}
\newcommand{\setS}{\mathbb{S}}
\newcommand{\setR}{\mathbb{R}}
\newcommand{\setZ}{\mathbb{Z}}
\newcommand{\setT}{\mathbb{T}}
\newcommand{\SL}{\mathrm{SL}}
\newcommand{\BV}{\mathcal{B}}
\newcommand{\CV}{\mathcal{C}}
\newcommand{\DV}{\mathcal{D}}
\newcommand{\FV}{\mathcal{F}}
\newcommand{\TV}{\mathcal{T}}
\newcommand{\Reg}{\mathcal{R}}
\newcommand{\cP}{{\mathcal{P}}}
\newcommand{\cH}{{\mathcal{H}}}
\newcommand{\EM}{{\cal E \!\:\!\! M}}
\newcommand{\re}{\ensuremath{\operatorname{Re}}}
\newcommand{\im}{\ensuremath{\operatorname{Im}}}
\newcommand{\HHsup}[1]{\ensuremath{\mathbf{HH}^{+}_{#1}}}
\newcommand{\HHsub}[1]{\ensuremath{\mathbf{HH}^{-}_{#1}}}
\newcommand{\HHdeg}[1]{\ensuremath{\mathbf{HH}^{0}_{#1}}}
\newcommand{\Cusp}[1]{\ensuremath{\mathbf{CB}_{#1}}}
\newcommand{\CS}[1]{\ensuremath{\mathbf{CS}_{#1}}}
\newcommand{\fref}[1]{figure~\ref{#1}}
\newcommand{\sref}[1]{section~\ref{#1}}
\newcommand{\Sref}[1]{Section~\ref{#1}}
\newtheorem{theorem}{Theorem}
\newtheorem{lemma}[theorem]{Lemma}
\newtheorem{proposition}[theorem]{Proposition}
\newtheorem{remark}[theorem]{Remark}
\newcommand{\qed}{\nolinebreak\hfill {$\Box$} \par\medbreak}
\newenvironment{proof}{\removelastskip\medskip\noindent\textbf{Proof}\quad}
                      {\qed}
\begin{document}

\title{\protect\Large Bifurcations and Monodromy \\
                      of the Axially Symmetric
                      $1{:}1{:}{-}2$~Resonance}

\author{{\protect\normalsize Konstantinos Efstathiou} \protect\\[-1mm]
   {\protect\footnotesize\protect\it Bernoulli Institute,
                             University of Groningen} \protect\\[-2mm]
   {\protect\footnotesize\protect\it PO Box 407,
                    9700~AK~Groningen, The Netherlands}     \protect\\
   {\protect\normalsize Heinz Han{\ss}mann}           \protect\\[-1mm]
   {\protect\footnotesize\protect\it Mathematisch Instituut,
                                Universiteit Utrecht} \protect\\[-2mm]
   {\protect\footnotesize\protect\it Postbus 80010,
                     3508~TA Utrecht, The Netherlands}      \protect\\
   {\protect\normalsize Antonella Marchesiello}       \protect\\[-1mm]
   {\protect\footnotesize\protect\it Department of Applied Mathematics,
                   Faculty of Information Technology} \protect\\[-2mm]
   {\protect\footnotesize\protect\it Czech Technical University
          in Prague, Th\'{a}kurova 9, 160 00 Prague 6, Czech Republic}}

\date{\protect\normalsize 21 July 2019} 

\maketitle

\begin{abstract}
\noindent
We consider integrable Hamiltonian systems in three degrees of
freedom near an elliptic equilibrium in $1{:}1{:}{-}2$~resonance.
The integrability originates from averaging along the periodic
motion of the quadratic part and an imposed rotational symmetry
about the vertical axis.
Introducing a detuning parameter we find a rich bifurcation diagram,
containing three parabolas of Hamiltonian Hopf bifurcations that
join at the origin.
We describe the monodromy of the resulting ramified $3$--torus
bundle as variation of the detuning parameter lets the system
pass through $1{:}1{:}{-}2$~resonance.
\end{abstract}

\section{Introduction}
\label{Introduction}

\noindent
Let $H^{\gamma}$ be a family of Hamiltonian systems in three degrees of
freedom depending on parameters $\gamma \in \mathbb{R}^k$ and defined
on~$\R^6$ with canonical co-ordinates $x_i, y_i$, $i = 1, 2, 3$.
We are interested in the dynamics near the elliptic equilibria, which
are isolated for fixed~$\gamma$.
Moving the equilibrium to the origin we expand
\begin{equation}
\label{originalhamiltonian}
   H^{\gamma}(x, y) \;\; = \;\; \alpha_1(\gamma) I_1 \; + \;
   \alpha_2(\gamma) I_2 \; + \; \alpha_3(\gamma) I_3 \; + \; h.o.t.
\end{equation}
where $I_i = \frac{1}{2}(y_i^2 + x_i^2)$, $i=1,2,3$.
The dynamical behaviour near the origin now depends on number-theoretic
properties of the frequencies $\alpha_i = \alpha_i(\gamma)$.
In the non-resonant case, where there are no integer relations
\begin{equation}
\label{resonance}
   k_1 \alpha_1 \; + \; k_2 \alpha_2 \; + \; k_3 \alpha_3 \;\; = \;\; 0
\end{equation}
among the frequencies, the normal form truncated at order~$4$ reads as
\begin{equation}
\label{normalizedhamiltonian}
   H(I) \;\; = \;\; \sum_{i=1}^3 \alpha_i I_i
   \; + \; \sum_{i,j=1}^3 \alpha_{ij} I_i I_j \enspace ,
\end{equation}
see~\cite{AKN85, SVM07} and references therein,
and generically satisfies Kolmogorov's non-degeneracy condition
\begin{displaymath}
   \det (\alpha_{ij})_{ij} \;\; \neq \;\; 0  \enspace.
\end{displaymath}
The integrable Hamiltonian function~\eqref{normalizedhamiltonian}
defines a ramified torus bundle with regular fibres~$\T^3$, singular
fibres~$\T^2$ parametrised by the planes $I_i = 0$, $i=1,2,3$,
periodic orbits (the normal modes, also singular fibres of the
ramified torus bundle) parametrised by the $I_i$--axes and the
equilibrium (giving the most singular fibre of the ramified torus
bundle) at the origin $I_1 = I_2 = I_3 = 0$; all this is also valid
in the indefinite case (where the frequencies~$\alpha_i$ do not all
have the same sign).
Kolmogorov's non-degeneracy condition allows to apply
{\sc kam}~theory~\cite{AKN85} whence the original
Hamiltonian~\eqref{originalhamiltonian} defines a Cantorised ramified
torus bundle, with fibres $\T^n$, $n=2,3$ parametrised by Cantor sets
--- obtained from their smooth counterparts above by strong
non-resonance conditions (e.g.\ Diophantine conditions) on the internal
frequencies of the tori.

This description of the local dynamics of~\eqref{originalhamiltonian}
remains correct in case of resonances if these are of order
$|k| := |k_1| + |k_2| + |k_3| \geq 5$ since then the normal form
truncated at order~$4$ is still given by~\eqref{normalizedhamiltonian}.
Thus, for an open and dense subset of parameters~$\gamma$ a
substantial part of the dynamics near the origin is rather transparent.
The elliptic equilibrium has three normal modes and the majority of
bounded trajectories is quasi-periodic, with three Cantor sets of
Hausdorff-dimension~$2$ organizing the distribution of invariant tori.
Note that in the positive definite case $\alpha_i > 0$, $i=1,2,3$ (as
well as in the negative definite case) the equilibrium is stable in the
sense of Lyapunov, while indefinite elliptic equilibria are expected to
be unstable due to Arnol'd diffusion.

In case of a single resonance~\eqref{resonance} of order $|k| \leq 4$
the normal form truncated at order~$4$ is still integrable but contains
extra `resonant terms' of order~$|k|$.
The resulting ramified torus bundle and its Cantorised counterpart thus
depend on the resonance at hand.
For instance, an indefinite elliptic equilibrium with resonance
$2 \alpha_1 + \alpha_2 = 0$ may have (in three degrees of freedom) only
two normal modes, see~\cite{AO84, mlb87}.
Single resonances~\eqref{resonance} among the normal frequencies
$\alpha_i = \alpha_i(\gamma)$ define hypersurfaces in the parameter
space and detuning the frequencies shows how to pass from one open
region to a neighbouring one.

In case of two independent resonances~\eqref{resonance} the frequencies
are integer multiples $\alpha_i = n_i \alpha$, $i=1,2,3$ (with
$\gcd(n_1, n_2, n_3) = 1$) of a basic frequency $\alpha \in \R$ and one
speaks of the $n_1{:}n_2{:}n_3$~resonance
\begin{displaymath}
   \RO{n_1}{n_2}{n_3}: \qquad K \;\; = \;\;
   n_1 I_1 \; + \; n_2 I_2 \; + \; n_3 I_3
\end{displaymath}
(scaling time allows to achieve $\alpha = 1$).
In this paper we study the indefinite $1{:}1{:}{-}2$~resonance
where~\eqref{originalhamiltonian} reads as
\begin{equation}
\label{originalresonanthamiltonian}
   H^{\gamma}(x, y) \;\; = \;\; (\alpha + \delta_1(\gamma)) I_1 \; + \;
   (\alpha + \delta_2(\gamma)) I_2 \; - \;
   (2 \alpha + \delta_3(\gamma)) I_3 \; + \; h.o.t.
\end{equation}
with detuning $\delta = \delta(\gamma)$; for the moment we refrain from
scaling time to achieve $\alpha = 1$.
Smooth changes of parameters
$\gamma \mapsto \delta(\gamma)$ allow to skip the
$\gamma$--dependence in~\eqref{originalresonanthamiltonian} altogether
and study~$H^{\delta}$ instead.
The Hamiltonians with a $1{:}1{:}{-}2$~resonant equilibrium at the
origin are thus given by $2 \delta_1 = 2 \delta_2 = \delta_3$.

\begin{remark}
  We expect that the three `resonant terms' of order~$3$ make the
  normal form truncated at order~$3$ non-integrable, similar to the
  (definite) $1{:}1{:}2$~resonance for which non-integrability has
  been proven in the absence of extra symmetries~\cite{jjd84}; see
  also~\cite{oc12} where the same result could be achieved for the
  $1{:}2{:}3$ and $1{:}2{:}4$~resonances.
\end{remark}

\noindent
To enforce integrability we impose an axial $\setS^1$--symmetry of
rotations about the $x_3$--axis.
From Noether's theorem it follows that the third component
\begin{displaymath}
   N \;\; = \;\; x_1 y_2 \; - \; x_2 y_1
\end{displaymath}
of the angular momentum is an integral of motion.
For an axially symmetric detuning we have
$\delta_1 = \delta_2 =: \delta$ and subsume $\delta_3$ into~$2\alpha$.
Adding the axially symmetric detuning $\beta N$ of the
$1{:}1$~subresonance
the Hamiltonian~\eqref{originalresonanthamiltonian} becomes
\begin{equation}
\label{originaldetunedhamiltonian}
   H^{\delta} \;\; = \;\;
   \alpha L \; + \; \beta N \; + \; \delta R \; + \; h.o.t.
\end{equation}
with
\begin{displaymath}
\end{displaymath}
\begin{align*}
   \RO{1}{1}{{-}2}: \qquad & L \;\; = \;\;
   I_1 \; + \; I_2 \; - \; 2 I_3
\intertext{and}
   \RO{1}{1}{0} : \qquad & R \;\; = \;\; I_1 \; + \; I_2
   \enspace.
\end{align*}
Let $H$ denote the normal form of~$H^{\delta}$ with respect to~$L$
truncated at order~$4$.
The conserved quantity $N$ is inherited by~$H$ and the normalizing
procedure makes~$L$ an integral of motion as well.
Since $\{ N, L \} = 0$ the Hamiltonian~$H$ admits a $\T^2$--symmetry
and the energy-momentum mapping
\begin{equation}
\label{energymomentummapping}
   \EM  \, := \, (N, L, H) \; : \;\; \R^6 \;\; \longrightarrow \;\; \R^3
\end{equation}
turns~$\R^6$ into a ramified torus bundle.

\begin{remark}
  In the literature on integrable Hamiltonian systems, the diagram
  showing the set of regular and critical values of the
  energy-momentum mapping~$\EM$ and the type of the corresponding
  fibres of~$\EM$ is sometimes called the `bifurcation diagram
  of~$\EM$'.
  In this work, to avoid confusion we use the term \emph{bifurcation
  diagram} only to refer to the set of (internal and external)
  parameter values for which the system undergoes a bifurcation and
  we use the term \emph{set of critical values of~$\EM$} to refer
  to that `bifurcation diagram of~$\EM$'.
\end{remark}

\noindent
After reduction of the $\T^2$--symmetry the values $\mu$ of~$N$ and
$\ell$ of~$L$ serve as (internal) parameters.
The set of critical values of the energy-momentum
mapping~\eqref{energymomentummapping} provides for a concise
description of the dynamics defined by~$H$.
This set of critical values is in turn determined by the bifurcation
diagram (in the $(\mu, \ell)$--plane) of the reduced system.
In \fref{fig:introductorybifurcationdiagrams} we give two of the
bifurcation diagrams we derive in \sref{Dynamics} of the present
paper.
During the process it is instructive to include the
detuning~$\delta$ as (external) parameter, even if one were only
interested in the case $\delta = 0$ of the $1{:}1{:}{-}2$~resonance
itself.

\begin{figure}[t!]
\begin{center}
\includegraphics[width=6cm]{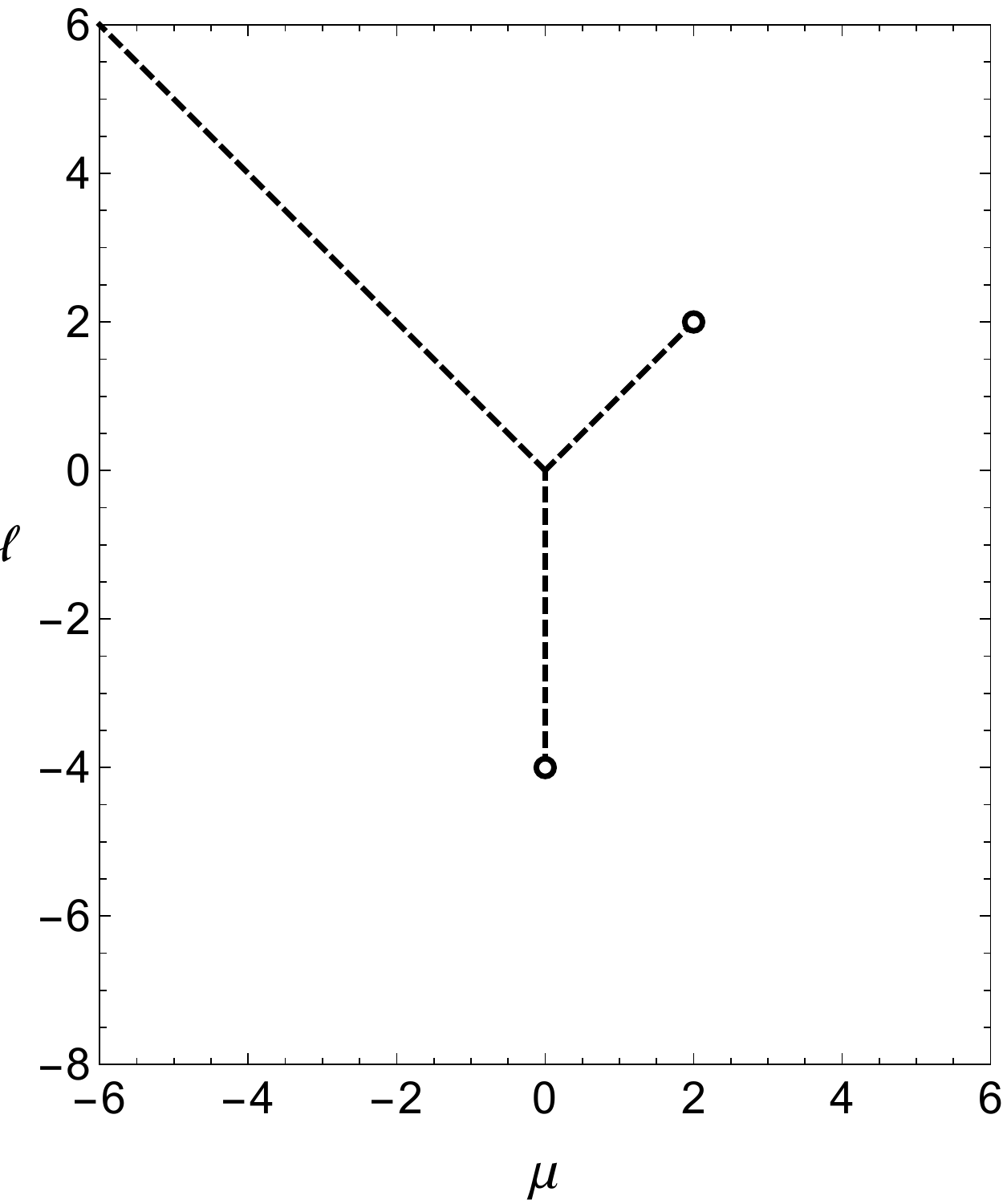}
\hspace{1cm}
\includegraphics[width=6cm]{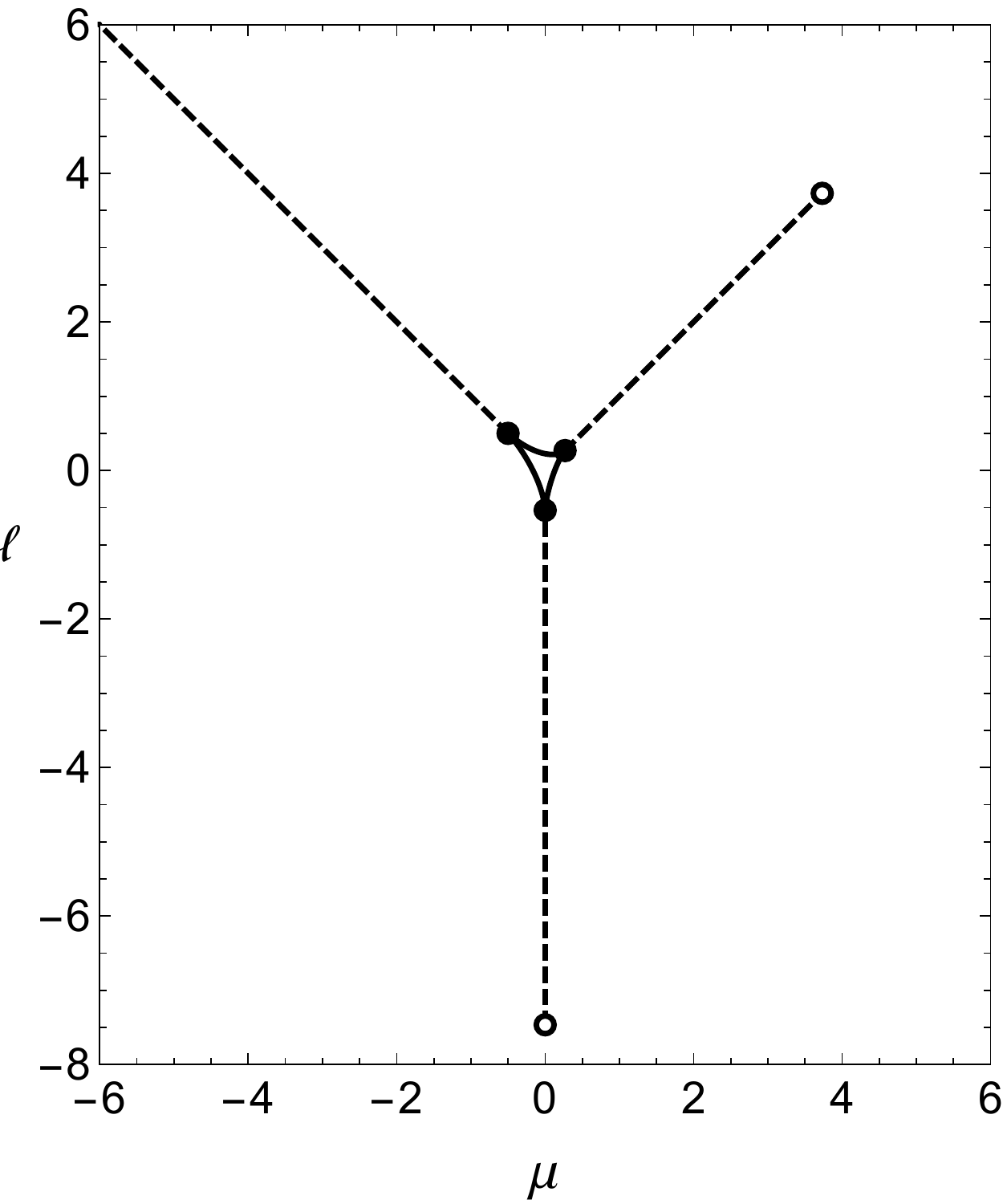}
\end{center}
\caption{
   The bifurcation diagram for a typical choice of normalized
   axially symmetric higher order terms
   in~\eqref{originaldetunedhamiltonian} showing two relevant
   phenomena:
   the effect of detuning near the origin and the
   termination of unstable normal modes at supercritical
   Hamiltonian Hopf bifurcations.
   Solid lines stand for quasi-periodic centre-saddle bifurcations
   while dashed lines stand for unstable periodic orbits which undergo
   Hamiltonian Hopf bifurcations at the bullets --- subcritical at a
   solid bullet~$\bullet$ and supercritical at an open bullet~$\circ$.
   The left figure is without detuning ($\delta = 0$); here, the origin
   $(\mu, \ell) = (0, 0)$ stands for the $1{:}1{:}{-}2$~resonant
   equilibrium and has three unstable normal modes.
   In the right figure $\delta \neq 0$ and the equilibrium has three
   stable normal modes that then undergo subcritical Hamiltonian
   Hopf bifurcations to become unstable.
\label{fig:introductorybifurcationdiagrams} }
\end{figure}

This paper is organized as follows. Before determining in
\sref{Dynamics} the general form of $\T^2$--symmetric higher order
terms of~\eqref{originaldetunedhamiltonian} we pass in
\sref{Kinematics} to rotated co-ordinates that better reveal that
not only $L$ and~$R$ but also~$N$ is a resonant oscillator.
\Sref{Kinematics} details the reduction of the $\T^2$--symmetry,
i.e.\ the kinematics, while in \sref{Dynamics} the
one-degree-of-freedom dynamics is used to construct the bifurcation
diagram.
The set of critical values and the resulting monodromy (and their
dependence on external parameters like~$\delta$) are discussed in
\sref{Critical values} and \sref{Monodromy}.
The final \sref{Conclusions} concludes the paper, coming back to the
relation between~\eqref{originaldetunedhamiltonian} and its normal
form truncated at order~$4$.

\section{Kinematics}
\label{Kinematics}

\noindent
The axial symmetry ensures that the
Hamiltonian~\eqref{originaldetunedhamiltonian} admits the three
normal modes
\begin{eqnarray*}
   (x_1(t), y_1(t)) \enspace , & &
   \mbox{$x_2 = -y_1$, $y_2 = x_1$, $x_3 = 0$, $y_3 = 0$} \\
   (x_2(t), y_2(t)) \enspace , & &
   \mbox{$x_1 = -y_2$, $y_1 = x_2$, $x_3 = 0$, $y_3 = 0$} \\
   (x_3(t), y_3(t)) \enspace , & &
   \mbox{$x_1 = 0$, $y_1 = 0$, $x_2 = 0$, $y_2 = 0$.}
\end{eqnarray*}
The term normal mode is often restricted to periodic orbits where
the remaining co-ordinates rest at~$0$ as in the normal $3$--mode,
instead of performing an `enslaved' oscillation as in the normal
$1$-- and $2$--modes.
To achieve the former for all normal modes we apply the orthogonal
change of variables defined by
\begin{equation}\label{cushmantransformation}
  \begin{pmatrix} x_1 \\ x_2 \\ y_1 \\ y_2  \end{pmatrix}
  \;\; = \;\; \frac{1}{\sqrt{2}}
  \begin{pmatrix}
    1 & 0 & 0 & -1 \\
    0 & 1 & -1 & 0 \\
    0 & 1 & 1 & 0 \\
    1 & 0 & 0 & 1
  \end{pmatrix}
  \begin{pmatrix} q_1 \\ q_2 \\ p_1 \\ p_2 \end{pmatrix}
  \enspace, \quad \mbox{$x_3 = q_3$, $y_3 = p_3$,}
\end{equation}
which turns the symplectic structure $\dd x \wedge \dd y$ into
$\dd q \wedge \dd p$.
The transformation~\eqref{cushmantransformation} leaves the form
of~$L$ and~$R$ invariant while $N = x_1 y_2 - x_2 y_1$ is revealed
to be the Hamiltonian
\begin{equation}
   \RO{1}{{-}1}{0} : \qquad N \;\; = \;\;
   \frac{p_1^2 + q_1^2}{2} \; - \; \frac{p_2^2 + q_2^2}{2}
\end{equation}
of three coupled oscillators in $1{:}{-}1{:}0$~resonance.
An advantage of this point of view is that adding oscillators
$\RO{m_1}{m_2}{m_3}$ and $\RO{n_1}{n_2}{n_3}$ yields again an
oscillator $\RO{m_1+n_1}{m_2+n_2}{m_3+n_3}$.

\begin{remark}
\label{addingamultiple}
Adding the multiple~$\beta$ of~$N$
in~\eqref{originaldetunedhamiltonian} yielded a detuning of the
$1{:}1$~resonant oscillators in the subspace
$(x_3, y_3) = (q_3, p_3) = 0$ to frequencies
$\alpha + \beta + \delta$ and $\alpha - \beta + \delta$ (next
to~$-2 \alpha$) without breaking the symmetry generated by~$N$.
Note that adding a multiple~$\beta$ of
\begin{subequations}
\begin{align*}
   \frac{x_1^2 + y_1^2}{2} \; - \; \frac{x_2^2 + y_2^2}{2}
   \;\; & = \;\; - (q_1 p_2 - q_2 p_1)
\intertext{or of}
   x_1 x_2 \, + \, y_1 y_2 \;\; & = \;\; q_1 q_2 \, + \, p_1 p_2
\end{align*}
\end{subequations}
would yield the same detuning, but at the price of breaking the
symmetry generated by~$N$.
A general detuning of the $1{:}1$~subresonance has indeed co-dimension
three and would lead to all the phenomena detailed in~\cite{HH17}
concerning higher order terms in the normal form.
\end{remark}

\subsection{Isotropy}
\label{Isotropy}

\noindent
Let $\X{G} : \dot{F} = \{ F, G \}$ denote the Hamiltonian vector field
defined by a function~$G$ and $\varphi^G_t$ the corresponding flow.
Identify $\R^6 \cong \C^3$ by introducing complex co-ordinates
$z_j = p_j + \ii q_j$, $j=1,2,3$.
The flow $\varphi^N_t$ of~$\X{N}$ induces an $\setS^1$--action
on~$\C^3$.
For treating monodromy later on we prefer to have integer
periodicities, so let us define
\begin{displaymath}
   \T^1 \;\; := \;\; \R\quotient{\Z}
\end{displaymath}
(an $\setS^1$ with radius~$\Frac{1}{2 \pi}$) and use
$z = (z_1, z_2, z_3)$ to write the $\T^1$--action induced
by~$\varphi^N_t$ as
\begin{equation}
\label{axialsymmetry}
\begin{array}{rccl}
   \varphi^N : & \T^1 \times \C^3 & \longrightarrow & \C^3 \\
   & (t, z) &  \mapsto & \varphi^N_{2 \pi t}(z) \; = \;
   (\ee^{2 \pi \ii t} z_1 ,\, \ee^{-2 \pi \ii t} z_2,\, z_3)
\end{array}
\enspace.
\end{equation}
This action has trivial isotropy, except at $z_1 = z_2 = 0$ where the
isotropy subgroup is~$\T^1$.
Similarly, the flow $\varphi^L_t$ of~$\X{L}$ induces a $\T^1$--action
on~$\C^3$ given by
\begin{equation}
\label{oscillatorsymmetry}
\begin{array}{rccl}
   \varphi^L : & \T^1 \times \C^3 & \longrightarrow & \C^3 \\
   & (t, z) &  \mapsto & \varphi^L_{2 \pi t}(z) \; = \;
   (\ee^{2 \pi \ii t} z_1,\, \ee^{2 \pi \ii t} z_2,\,
   \ee^{-4 \pi \ii t} z_3)
\end{array}
\enspace.
\end{equation}
This action has non-trivial isotropies $\Z_2$ when $z_1 = z_2 = 0$
and $\T^1$ when $z_1 = z_2 = z_3 = 0$.
Combining the two commuting $\T^1$--actions $\varphi^N$ and~$\varphi^L$
one can define an action of $\T^2 = \T^1 \times \T^1$ on $\C^3$ given by
\begin{equation}
\label{T2-action-1}
\begin{array}{ccl}
   \T^2 \times \C^3 & \longrightarrow & \C^3 \\
   (s, t, z) &  \mapsto &
   \varphi^L_{2 \pi t} \circ \varphi^N_{2 \pi s}(z) \; = \;
   (\ee^{2 \pi \ii (s+t)} z_1,\, \ee^{2 \pi \ii (t-s)} z_2,\,
   \ee^{-4 \pi \ii t} z_3)
\end{array}
\enspace.
\end{equation}
A direct computation shows that the
$\T^2$--action~\eqref{T2-action-1} is not \emph{effective} as the
element $(s, t) = (\frac{1}{2}, \frac{1}{2}) \in \T^2$ acts as the
identity on all of~$\C^3$.

\begin{table}[tbp!]
\begin{center}
\begin{small}
\begin{tabular}{c|cccc}
set & elements & isotropy subgroup & $\Phi$--orbit & dynamics \\ \hline
$C_{123}$ & $z_1 = z_2 = z_3 = 0$ & $\T^2$ & point & equilibrium \\
$C_{12}$ & $z_1 = z_2 = 0 \ne z_3$ & $\{ (s, t) \mid t=0 \} \cong \T^1$
 & $\T^1$ & normal $3$--mode \\
$C_{13}$ & $z_1 = z_3 = 0 \ne z_2$ & $\{ (s, t) \mid s=0 \} \cong \T^1$
 & $\T^1$ & normal $2$--mode \\
$C_{23}$ & $z_2 = z_3 = 0 \ne z_1$ & $\{ (s, t) \mid s+t=0 \} \cong \T^1$
 & $\T^1$ & normal $1$--mode
\end{tabular}
\caption{
   Non-trivial isotropy groups of the
   $\T^2$--action~\eqref{T2actionPhi} and types of $\Phi$--orbits.
\label{tab:isotropy} }
\end{small}
\end{center}
\end{table}

We need a pair of generators of $\T^1$--actions for which the
combined $\T^2$--action is effective and coincides
with~\eqref{T2-action-1} projected to
$\T^2/(\frac{1}{2}, \frac{1}{2})$.
Such a pair is given by $N$ and~$J$ where the latter is defined as
\begin{equation}
   \RO{1}{0}{{-}1} : \qquad J \;\; = \;\; \frac{1}{2}(N + L)
   \enspace.
\end{equation}
The flows $\varphi^N_s$ and $\varphi^J_t$ on~$\R^6$ combine to the
effective $\T^2$--action
\begin{equation}
\label{T2actionPhi}
\begin{array}{rccc}
   \Phi : & \T^2 \times \C^3 & \longrightarrow & \C^3 \\
   & (s, t, z) &  \mapsto &
   (\ee^{2 \pi \ii (s+t)} z_1,\, \ee^{- 2 \pi \ii s} z_2,\,
   \ee^{-2 \pi \ii t} z_3)
\end{array}
\end{equation}
with momentum mapping $(N, J) : \R^6 \longrightarrow \R^2$.
Table~\ref{tab:isotropy} summarizes the isotropies of~$\Phi$ and the
topological types of the corresponding
$\Phi$--orbits.

\subsection{Reduction}
\label{Reduction}

\noindent
The reduction of the $\T^2$--symmetry is best performed by using a
Hilbert basis of the algebra of $\T^2$--invariant functions as
variables on the reduced phase space, see~\cite{CB97}.
As proven in~\cite{gws75, jnm77} this algebra is generated by
$\T^2$--invariant polynomials.

\begin{proposition}
\label{prop:T2inv}
The algebra $\mathcal{A}$ of invariant polynomials of the action $\Phi$
in~\eqref{T2actionPhi} is generated by the real polynomials $N$, $J$
and $R$ together with $X$ and $Y$ defined by
\begin{eqnarray*}
  X & = & \re (z_1 z_2 z_3)
          \;\; = \;\; p_1 p_2 p_3 \; - \; q_1 q_2 p_3
          \; - \; q_1 p_2 q_3 \; - \; p_1 q_2 q_3
    \enspace,
  \\
  Y & = & \im (z_1 z_2 z_3)
          \;\; = \;\; q_1 p_2 p_3 \; + \; p_1 q_2 p_3
          \; + \; p_1 p_2 q_3 \; - \; q_1 q_2 q_3
   \enspace.
\end{eqnarray*}
These satisfy the syzygy
\begin{equation}
\label{T2syz-J}
   S(N, J, R, X, Y) \;\; := \;\;
   X^2 \, + \, Y^2 \; - \; (R^2 - N^2) (R + N - 2 J) \;\; = \;\; 0
   \enspace,
\end{equation}
together with the inequality
$R \geq R_{\min} := \max(|N|, 2 J - N) \geq 0$.
These relations define a semi-algebraic variety in~$\R^5$.
\end{proposition}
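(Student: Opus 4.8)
The plan is to pass to the complex coordinates $z_j = p_j + \ii q_j$ introduced in \sref{Isotropy}, in which $\Phi$ acts diagonally, and to reduce the computation of $\mathcal A$ to a combinatorial description of the invariant monomials. From~\eqref{T2actionPhi}, under $\Phi(s,t,\cdot)$ the monomial $z_1^{a_1}\bar z_1^{b_1}\,z_2^{a_2}\bar z_2^{b_2}\,z_3^{a_3}\bar z_3^{b_3}$ acquires the factor $\ee^{2\pi\ii\left[(a_1-b_1)(s+t)-(a_2-b_2)s-(a_3-b_3)t\right]}$, so it is $\Phi$--invariant exactly when $a_1-b_1 = a_2-b_2 = a_3-b_3 =: k$ for some $k\in\Z$. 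Writing an arbitrary $P\in\mathcal A$ as a complex polynomial in $z_1,\bar z_1,\dots,z_3,\bar z_3$, diagonality of the action forces every monomial occurring in $P$ to be $\Phi$--invariant, and reality of $P$ forces these monomials to occur in conjugate pairs; hence $\mathcal A$ is spanned over $\R$ by the real and imaginary parts of the $\Phi$--invariant monomials. It therefore suffices to show $\re(m),\im(m)\in\R[N,J,R,X,Y]$ for each such $m$. (That $N,J,R,X,Y$ are themselves $\Phi$--invariant is the immediate check that $z_1z_2z_3\mapsto z_1z_2z_3$ and $z_j\bar z_j\mapsto z_j\bar z_j$.)

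Next I would split on the sign of $k$. The relations $N=\frac12(|z_1|^2-|z_2|^2)$, $R=\frac12(|z_1|^2+|z_2|^2)$, $L=R-|z_3|^2$ and $J=\frac12(N+L)$ invert to $|z_1|^2=R+N$, $|z_2|^2=R-N$, $|z_3|^2=R+N-2J$, so that $\R[|z_1|^2,|z_2|^2,|z_3|^2]=\R[N,J,R]$. If $k=0$ the invariant monomial $m$ is a product of the real quantities $z_j\bar z_j=|z_j|^2$ and hence lies in $\R[N,J,R]$, with $\im(m)=0$. If $k>0$ every invariant monomial is divisible by $z_1z_2z_3$, so $m=(z_1z_2z_3)^k\,\mu=(X+\ii Y)^k\,\mu$ for a monomial $\mu$ in the $|z_j|^2$; since the real and imaginary parts of $(X+\ii Y)^k$ are polynomials in $X,Y$ by the binomial theorem, both $\re(m)=\mu\,\re\bigl((X+\ii Y)^k\bigr)$ and $\im(m)=\mu\,\im\bigl((X+\ii Y)^k\bigr)$ lie in $\R[N,J,R,X,Y]$. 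The case $k<0$ reduces to $k>0$ by conjugation, $\bar m$ having parameter $-k$. This gives $\mathcal A=\R[N,J,R,X,Y]$.

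The syzygy~\eqref{T2syz-J} and the inequality then follow by inspection: $X^2+Y^2=|z_1z_2z_3|^2=|z_1|^2|z_2|^2|z_3|^2=(R+N)(R-N)(R+N-2J)$, which rearranges to $S(N,J,R,X,Y)=0$, while $|z_j|^2\geq 0$ forces $R\geq-N$, $R\geq N$ and $R\geq 2J-N$, i.e.\ $R\geq\max(|N|,2J-N)=R_{\min}$ with $R_{\min}\geq|N|\geq 0$. A subset of $\R^5$ defined by one polynomial equation and finitely many polynomial inequalities is by definition semi-algebraic. To see this variety is precisely the image of the orbit map (hence the reduced phase space), note conversely that a tuple satisfying these constraints is realised by taking $|z_1|^2=R+N$, $|z_2|^2=R-N$, $|z_3|^2=R+N-2J$ (all nonnegative) and arguments of the $z_j$ with $z_1z_2z_3=X+\ii Y$ --- possible exactly because the syzygy makes $|z_1z_2z_3|=\sqrt{X^2+Y^2}$ compatible with those moduli, the degenerate subcases where some $|z_j|^2$ vanishes (forcing $X=Y=0$) being trivial. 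The only place needing care is the case $k\neq 0$, where one must see that the two real generators $X,Y$ suffice for all powers of $z_1z_2z_3$ at once; once the invariant monomials are listed this is just the binomial theorem, and everything else is linear algebra together with the single identity $X^2+Y^2=|z_1|^2|z_2|^2|z_3|^2$. (That~\eqref{T2syz-J} in fact generates the whole ideal of relations among the five generators can be added by a Krull-dimension count, the reduced space being four-dimensional, but this is not part of the statement.)
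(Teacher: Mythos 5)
Your proof is correct. The paper itself gives no proof of Proposition~\ref{prop:T2inv} --- it only invokes the general results of \cite{gws75, jnm77} that the invariant algebra is generated by invariant polynomials --- so your argument fills that gap with the standard torus-reduction computation: classifying the $\Phi$--invariant monomials by the common exponent defect $k$, factoring out $(z_1z_2z_3)^k=(X+\ii Y)^k$, and expressing the moduli $|z_1|^2=R+N$, $|z_2|^2=R-N$, $|z_3|^2=R+N-2J$ in terms of the Casimirs, from which the syzygy $X^2+Y^2=|z_1|^2|z_2|^2|z_3|^2$ and the inequalities are immediate. The additional verification that the semi-algebraic set is exactly the image of the orbit map is a welcome extra not demanded by the statement.
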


\noindent
Note that $(N, L) \mapsto (N, J)$ has the inverse $L = 2 J - N$ whence
it is also possible to use the generators $N, L, R, X, Y$
of~$\mathcal{A}$ which satisfy
\begin{equation}
\label{T2syz-L}
   S(N, L, R, X, Y) \;\; = \;\;
   X^2 \, + \, Y^2 \; - \; (R^2 - N^2) (R - L) \;\; = \;\; 0
\end{equation}
and $R \geq R_{\min} = \max(|N|, L) \geq 0$.
In what follows we switch between these two sets of generators
of~$\mathcal{A}$ depending on which description is the most convenient.

\begin{table}[tb!]
\begin{center}
\begin{small}
\begin{tabular}{l|cccccc}
$\{ \downarrow , \rightarrow \}$ & $N$ & $L$ & $J$& $R$ & $X$ & $Y$
\\ \hline
$N$ & 0 & 0 & 0 & 0 & 0 & 0 \\
$L$ & 0 & 0 & 0 & 0 & 0 & 0 \\
$J$ & 0 & 0 & 0 & 0 & 0 & 0 \\
$R$ & 0 & 0 & 0 & 0 & $2Y$ & $-2X$ \\
$X$ & 0 & 0 & 0 & $-2Y$ & 0 & $N^2 + 2LR - 3R^2$ \\
$Y$ & 0 & 0 & 0 & $2X$ & $3R^2 - 2LR - N^2$ & 0
\end{tabular}
\caption{
   The Poisson bracket relations among the basic invariants.
\label{tab:brackets} }
\end{small}
\end{center}
\end{table}

For the Poisson structure we only need the Poisson bracket relations
given in table~\ref{tab:brackets}.
As expected, the symmetry generators $N$, $L$ and~$J$ are Casimir
functions and fixing their values to $2 \iota = \mu + \ell$, where
$\iota$~denotes the value of~$J$, yields the semi-algebraic surface
\begin{equation}\label{semialgvar}
   \cP_{\mu \ell} \;\; = \;\;
   \left\{ \; (R, X, Y) \in \R^3 \mitdE R \geq R_{\min},\,
   S_{\mu \ell}(R, X, Y) = 0 \; \right\} \enspace ,
\end{equation}
with Poisson structure
\begin{displaymath}
   \{ f, g \} \;\; = \;\; \langle \nabla f \times \nabla g \mid
   \nabla S_{\mu \ell} \rangle
   \enspace,
\end{displaymath}
where
\begin{equation*}
   S_{\mu \ell}(R, X, Y) \;\; = \;\;
   X^2 \, + \, Y^2 \; - \; (R^2 - \mu^2) (R - \ell)
   \enspace .
\end{equation*}
Any smooth (resp.\ polynomial) Hamiltonian function $H$ on~$\R^6$ that
is invariant under the $\T^2$--action~$\Phi$ can be expressed as a
smooth (resp.\ polynomial) function of the generators of~$\mathcal{A}$,
see~\cite{gws75, jnm77}.
Alternatively, $H$ gives rise to a function~$H_{\mu \ell}$ on the
reduced phase space~$P_{\mu \ell}$.

The semi-algebraic variety~$P_{\mu \ell}$ is a surface of revolution
about the $R$--axis.
The type of singularities of~$P_{\mu \ell}$ is given by the following
result and the different possibilities are depicted in \fref{fig:tip-3}.

\begin{proposition}
\label{prop:singtype}
The semi-algebraic variety~$\cP_{\mu \ell}$ given
by~\eqref{semialgvar} is smooth everywhere except possibly at its
`tip' point $(R, X, Y) = (R_{\min}, 0, 0)$.
The latter point is a cusp (or cuspidal singularity of order~$3$)
when $\mu = \ell = 0$; a cone (or conical singularity) when
$\ell = |\mu| > 0$ or $\mu = 0$, $\ell < 0$; and smooth in all other
cases.
\end{proposition}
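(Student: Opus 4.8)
The plan is to work throughout with the surface-of-revolution description of the reduced space: since $S_{\mu\ell}$ depends on $(X,Y)$ only through $X^2+Y^2$, the variety $\cP_{\mu\ell}$ is $\{\,X^2+Y^2 = f(R),\ R\ge R_{\min}\,\}$ revolved about the $R$--axis, where $f(R):=(R^2-\mu^2)(R-\ell)$ and $R_{\min}=\max(|\mu|,\ell)$. First I would record the elementary observation that the three roots $\mu,-\mu,\ell$ of the cubic $f$ are all $\le R_{\min}$ and that $R_{\min}$ is itself one of them (it equals $|\mu|$ when $|\mu|\ge\ell$, and $\ell$ otherwise). Hence $f(R_{\min})=0$ while $f(R)>0$ for every $R>R_{\min}$. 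Two things follow at once: the `tip' $T:=(R_{\min},0,0)$ lies on $\cP_{\mu\ell}$, and it is the only point of $\cP_{\mu\ell}$ with $X=Y=0$. At any other point we have $R>R_{\min}$ (so the semi-algebraic inequality is strictly inactive) and $(X,Y)\ne(0,0)$, where $\nabla S_{\mu\ell}=(-f'(R),\,2X,\,2Y)\ne0$; by the implicit function theorem $\cP_{\mu\ell}\setminus\{T\}$ is a smooth embedded surface in $\R^3$. This already proves the first assertion of the proposition.

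It remains to analyse a neighbourhood of $T$, and everything hinges on the multiplicity $m\in\{1,2,3\}$ of $R_{\min}$ as a root of $f$. Write $f(R)=(R-R_{\min})^m\,g(R)$ with $g$ polynomial; comparing with $f>0$ just above $R_{\min}$ forces $g(R_{\min})>0$ (otherwise the multiplicity would exceed $m$). The change of variable $u:=(R-R_{\min})\,g(R)^{1/m}$, well defined and a local diffeomorphism near $R_{\min}$ because $g>0$ there, turns the defining relation into the normal form $X^2+Y^2=u^m$. For $m=1$ this is the graph $u=X^2+Y^2$ over the $(X,Y)$--plane (and $R\ge R_{\min}$, i.e.\ $u\ge0$, is then automatic): $T$ is a smooth point. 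For $m=2$ the inequality $u\ge0$ is active and cuts out one nappe $u=\Sqrt{X^2+Y^2}$ of the quadratic cone; its tangent cone at the origin is that nappe itself, not a (half-)plane, so $T$ is a genuine conical singularity. For $m=3$ --- which, as one checks, forces $\mu=\ell=0$ and $f(R)=R^3$ identically --- the normal form is $X^2+Y^2=u^3$, the cuspidal surface of revolution, i.e.\ a cuspidal singularity of order~$3$; these are the three shapes illustrated in \fref{fig:tip-3}.

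Finally I would close with the short case distinction determining $m$ from $(\mu,\ell)$, i.e.\ the multiplicity of $R_{\min}=\max(|\mu|,\ell)$ as a root of $(R-\mu)(R+\mu)(R-\ell)$. A triple root at $R_{\min}$ demands $\mu=-\mu=\ell$, that is $\mu=\ell=0$. A double-but-not-triple root at $R_{\min}$ demands exactly two of $\mu,-\mu,\ell$ to coincide and to realise the maximum; running through $\mu=0$ with $\ell\ne0$ (here the repeated root $0$ realises the maximum precisely when $\ell<0$), $\ell=\mu$ (realises the maximum only when $\mu>0$), and $\ell=-\mu$ (only when $\mu<0$) yields exactly the conditions $\mu=0,\ \ell<0$ or $\ell=|\mu|>0$. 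In all remaining $(\mu,\ell)$ the root $R_{\min}$ is simple, giving a smooth tip. This reproduces the stated trichotomy.

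The only genuinely non-formal point --- and thus the main obstacle to guard against --- is the passage from $m\ge2$ to non-smoothness: the vanishing of $\nabla S_{\mu\ell}$ at $T$ does not by itself make the zero set singular there (witness $\{X^2=0\}$ in the plane). This is exactly what the reduction to $X^2+Y^2=u^m$ is for: once in that normal form, a tangent-cone computation settles the conical case $m=2$ and the classical cuspidal normal form settles $m=3$. A secondary point to verify carefully is that the constraint $R\ge R_{\min}$ does not create a spurious boundary near $T$; it does not, because on $\{X^2+Y^2=u^m\}$ with $m$ odd the relation $u^m\ge0$ already forces $u\ge0$, while for $m=2$ its effect is precisely to select the single nappe described above.
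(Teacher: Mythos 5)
Your proof is correct and follows essentially the same route as the paper's: both reduce everything to the multiplicity $m$ of $R_{\min}$ as a root of the cubic $(R^2-\mu^2)(R-\ell)$ and then read off smooth/cone/cusp for $m=1,2,3$, with the same final case distinction on $(\mu,\ell)$. The only difference is cosmetic — the paper works with the planar section $X^2=f(R)$ and the leading Taylor coefficient of $f$ at $a_1$, whereas you normalize the full surface to $X^2+Y^2=u^m$ and justify the cone/cusp classification (and the harmlessness of the constraint $R\ge R_{\min}$) somewhat more explicitly.
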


\begin{figure}[t!]
\begin{center}
\begin{picture}(400,120)
   \put(0,0){\epsfig{file=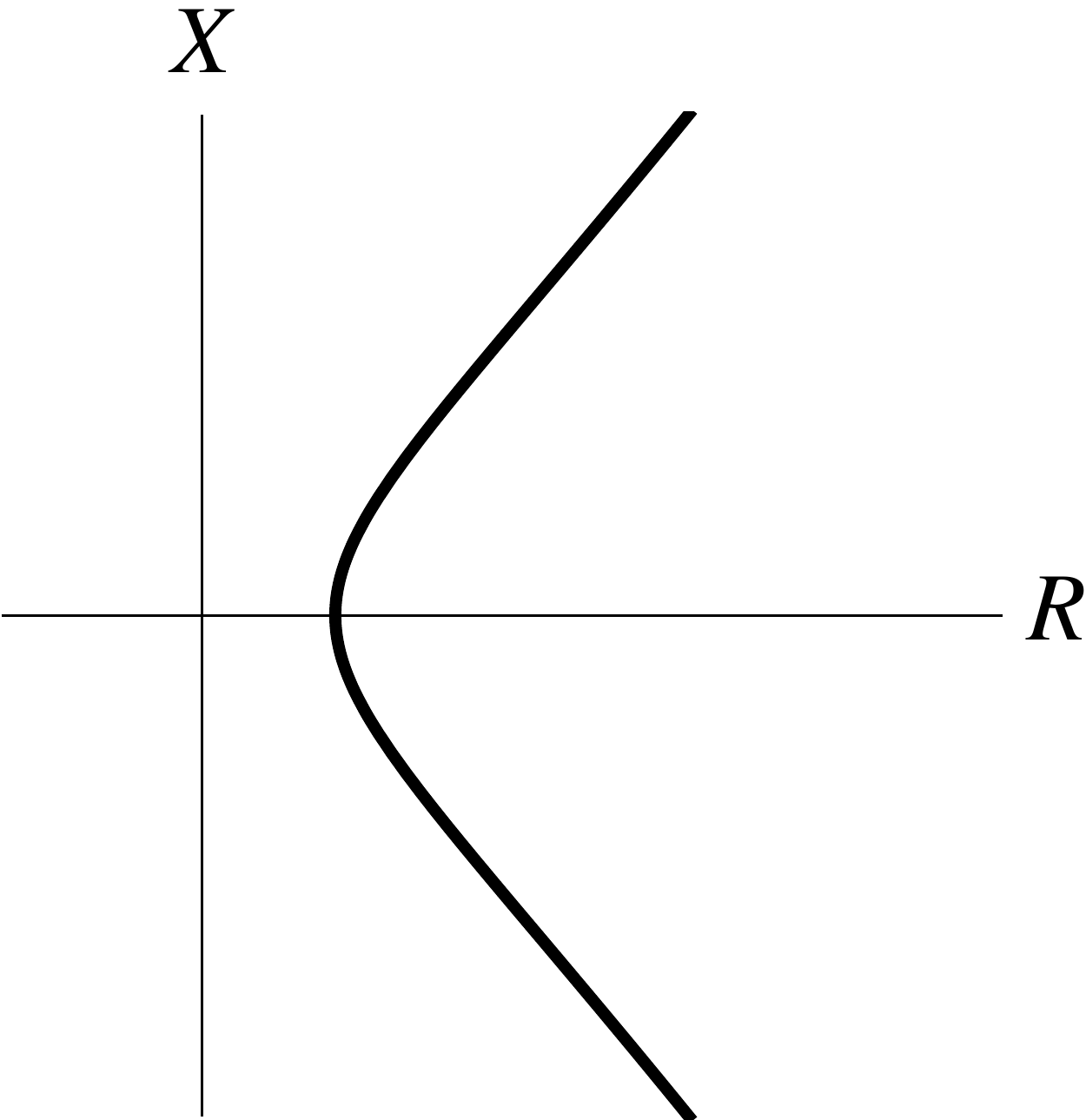, width=120pt}}
   \put(140,0){\epsfig{file=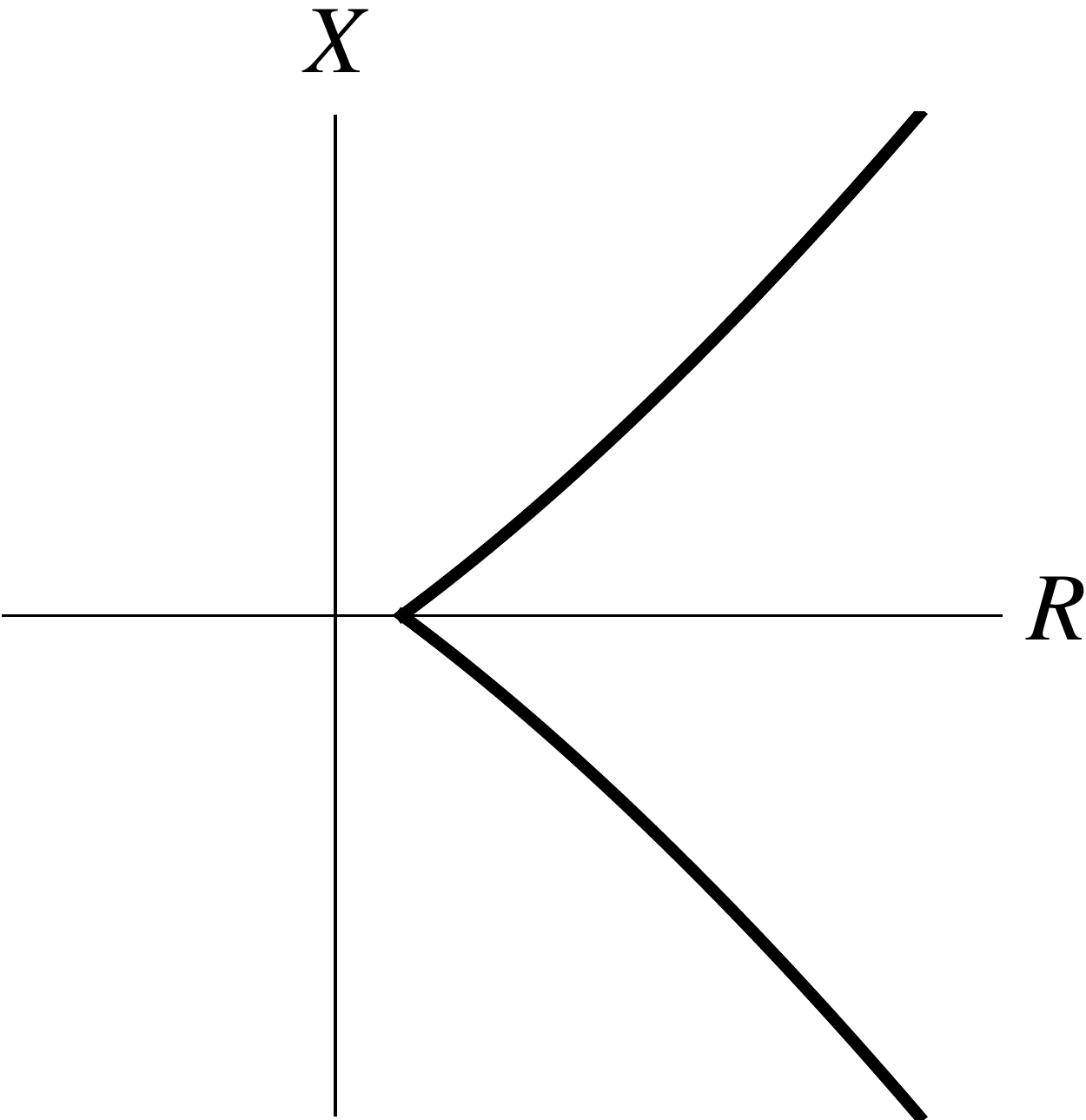, width=120pt}}
   \put(280,0){\epsfig{file=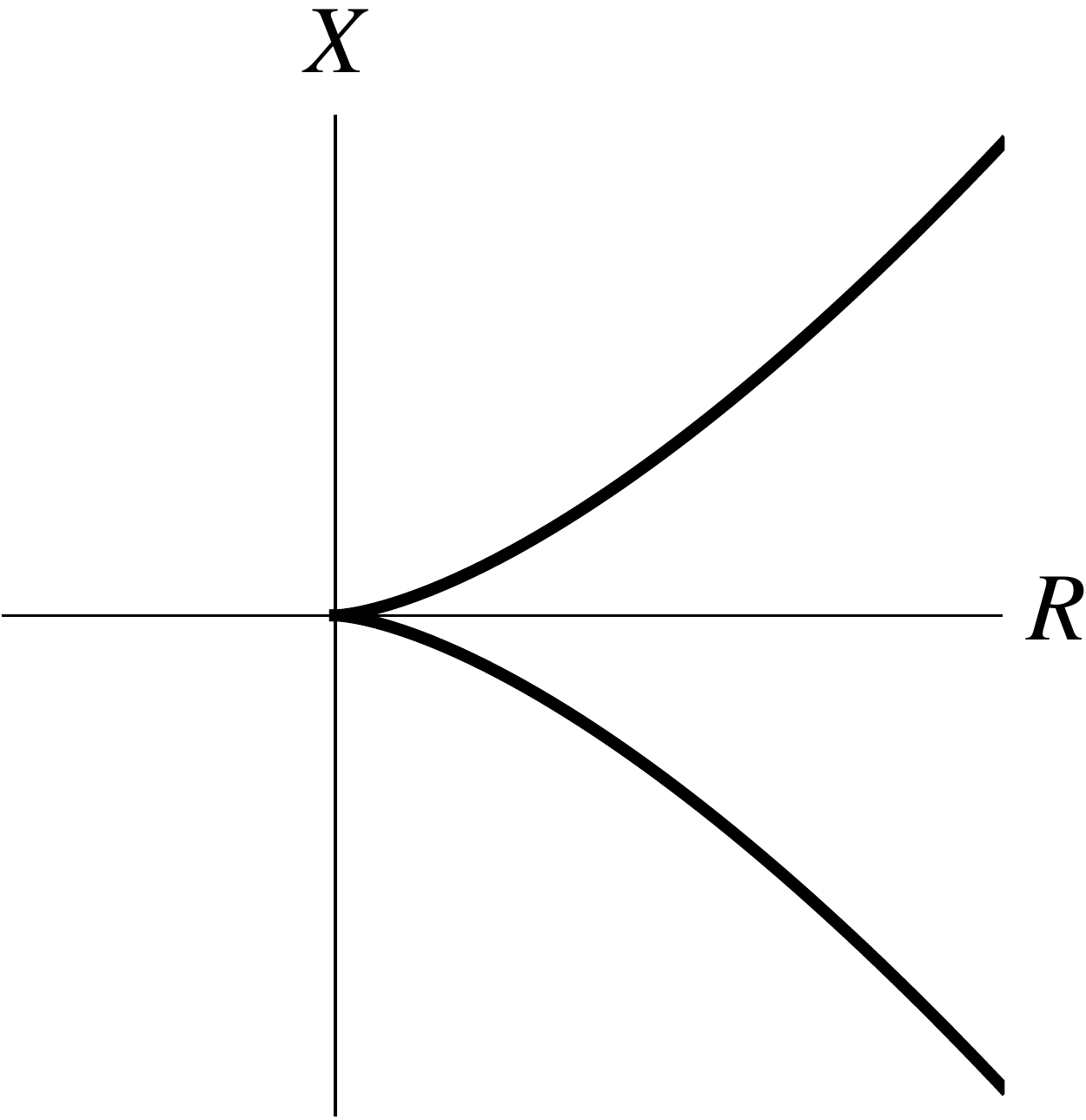, width=120pt}}
\end{picture}
\end{center}
\caption{
   The types of the singularities of the reduced spaces~$\cP_{\mu \ell}$
   (which are surfaces of revolution about the $R$--axis).
   From left to right: smooth, conical and cuspidal.
\label{fig:tip-3} }
\end{figure}

\begin{proof}
Since the semi-algebraic variety~$\cP_{\mu \ell}$ is a surface of
revolution it is sufficient to consider its section with the plane
$\{ Y = 0 \}$.
Then we have
\begin{equation*}
   X^2 \; = \; (R^2 - \mu^2) (R - \ell)
   \enspace , \quad 
   R \; \ge \; R_{\min} \; = \; \max(|\mu|,\ell) \; \ge \; 0
   \enspace .
\end{equation*}
Let $a_1 \ge a_2 \ge a_3$ denote the ordered values in the set
$\{\mu, -\mu, \ell\}$ and write
\begin{equation}
\label{cubiconecusp}
   X^2 \; = \; (R - a_1) (R - a_2) (R - a_3)
   \enspace , \quad
   R \; \ge \; R_{\min} \; = \; a_1 \; \ge \; 0
   \enspace .
\end{equation}
The type of singularity at $R = a_1$ of $X^2 = f(R)$ with
$f(a_1) = 0$ is determined by the lowest order term in the
Taylor expansion of $f(R)$ at $R = a_1$.
In particular, writing $X^2 = c (R-a_1)^k + h.o.t.$
we have a smooth curve when $k=1$, a conical singularity when
$k=2$, and a cuspidal singularity when $k \ge 3$.
Expand~\eqref{cubiconecusp} in the form
\begin{displaymath}
   X^2 \;\; = \;\; [(a_1 - a_2) (a_1 - a_3)] \, (R - a_1)
   \; + \; [2a_1 - a_2 - a_3] \, (R - a_1)^2
   \; + \; (R - a_1)^3
   \enspace .
\end{displaymath}
The last expression shows that we have a cusp when $a_1 = a_2 = a_3$
implying $\mu = \ell = 0$.
We have a cone when $a_1 = a_2 > a_3$ and one can check that this
gives the cases $\ell = |\mu| > 0$ and $\mu = 0$, $\ell < 0$.
The other cases lead to a smooth point.
\end{proof}

\noindent
In particular, there are three open regions in the $(\mu, \ell)$--plane,
each region being characterised by the (positive) value of~$R_{\min}$,
that is,
\begin{displaymath}
   R_{\min} \; = \; \mu \; > \; \max(-\mu, \ell)
   \quad \mbox{or} \quad
   R_{\min} \; = \; -\mu \; > \; \max(\mu, \ell)
   \quad \mbox{or} \quad
   R_{\min} \; = \; \ell \; > \; |\mu|
   \enspace .
\end{displaymath}
The three regions are separated by three half lines satisfying
$\mu = \pm \ell$ or $\mu = 0$.
Along these lines the tip is a cone while at $\mu = \ell = 0$ the tip
is a cusp.
The three half lines and their common limit point parametrising singular
tips are the images under the momentum mapping $(N, L)$ of the sets
$C_{23}$, $C_{13}$, $C_{12}$, $C_{123}$ in Table~\ref{tab:isotropy}
with non-trivial isotropies $\T^1$ and~$\T^2$ of the action~$\Phi$.
After reconstruction to three degrees of freedom these are the
elliptic equilibrium at the origin and its three normal modes.

\section{Dynamics}
\label{Dynamics}

\noindent
The most general $\T^2$--symmetric higher order terms
in~\eqref{originaldetunedhamiltonian} are
functions of $N$, $L$, $R$, $X$ and~$Y$.
As in~\eqref{normalizedhamiltonian} we normalize to order~$4$
in the original variables.
Then $X$ and~$Y$ appear only linearly and a rotation in the
$(X, Y)$--plane removes the $Y$--term, whence
\begin{equation}
\label{symmetricdetunedhamiltonian}
\begin{aligned}
   H^{\delta}_{N, L}(R, X, Y) \;\; & = \;\; \alpha L \, + \,
   \beta N \, + \, \delta R \; + \; X \; + \; \frac{\kappa}{2} R^2 \\
   & \phantom{=} \mbox{} + \; (\lambda_1 N + \lambda_2 L) R \; + \;
  \frac{\gamma_1}{2} N^2 + \gamma_2 N L + \frac{\gamma_3}{2} L^2
\end{aligned}
\end{equation}
is the most general choice in orders $3$ and~$4$.
The coefficient $1$ of~$X$ can be obtained scaling time or space.
Fixing the values $N = \mu$ and $L = \ell$ of the Casimirs we omit
the constant part
$\alpha \ell + \beta \mu + \frac{1}{2} \gamma_1 \mu^2
 + \gamma_2 \mu \ell + \frac{1}{2} \gamma_3 \ell^2$
and abbreviate
\begin{equation}
\label{linearparameterdependence}
   \lambda \;\; := \;\;
   \delta \; + \; \lambda_1 \mu \; + \; \lambda_2 \ell
\end{equation}
to obtain the reduced Hamiltonian
\begin{equation}
\label{reducedhamiltonian}
   \cH_{\lambda}(R, X, Y) \;\; = \;\;
   X \; + \; \lambda R \; + \; \frac{\kappa}{2} R^2
   \enspace ,
\end{equation}
whose energy level sets are the parabolic cylinders
\begin{displaymath}
   \cH_{\lambda}^{-1}(h) \;\; = \;\; \left\{ \; (R, X, Y) \in \R^3
   \mitdE \cH_{\lambda}(R, X, Y) = h \; \right\}
   \enspace .
\end{displaymath}
The intersections of these with the reduced phase space
$\cP_{\mu \ell} \subseteq \R^3$ yield the orbits of~$\cH_\lambda$.
The values $\mu$ and~$\ell$ of the Casimirs serve as internal (or
distinguished) parameters.
We have equilibria where the two surfaces touch
and where $\cH_{\lambda}^{-1}(h)$ contains a singular
point of~$\cP_{\mu \ell}$.
We fix $\kappa \neq 0$ and only vary~$\lambda$ as an external
parameter.

\begin{remark}
The coefficient $\kappa \ne 0$ in~\eqref{symmetricdetunedhamiltonian}
could be chosen as $\kappa = 1$ through the combined scaling
\begin{equation}
\label{kappa=1 scaling}
  (\lambda, \mu, \ell, R, X, Y, H) \;\; \mapsto \;\;
  (\kappa^{-1} \lambda, \kappa^{-2} \mu, \kappa^{-2} \ell,
  \kappa^{-2} R, \kappa^{-3} X, \kappa^{-3} Y, \kappa^{-3} H)
\end{equation}
of space and time (which leaves the coefficient of~$X$ equal to~$1$).
However, the dynamical consequence is that the basic frequency (or
common multiple) of the unperturbed $1{:}1{:}{-}2$~resonant oscillator,
the coefficient $\alpha$ of $L$ in~\eqref{symmetricdetunedhamiltonian},
gets scaled to $\kappa \alpha$ and can no longer be scaled to~$1$.
We therefore refrain from scaling the coefficient of $R$
in \eqref{symmetricdetunedhamiltonian} to~$\frac{1}{2}$ until
\eqref{finallydoscalekappa} in section~\ref{Critical values} below.
Note that for negative $\kappa$ the scaling~\eqref{kappa=1 scaling}
involves a reflection (combined with a scaling) of the
parameter~$\lambda$ and the quantities $X$, $Y$ and~$H$.
\end{remark}

\noindent
Our aim here is to determine the bifurcation diagram for
the Hamiltonian $\cH_\lambda$ in the parameter space
$(\delta, \mu, \ell)$.
The coefficients $\lambda_1$ and~$\lambda_2$  are fixed
at rather small values while the detuning~$\delta$ (and
hence~$\lambda$) is allowed to vary.
For fixed values of $\lambda_1$ and $\lambda_2$,
equation~\eqref{linearparameterdependence} defines a
diffeomorphism between $(\lambda, \mu, \ell)$--space and
$(\delta, \mu, \ell)$--space, allowing direct translation
of our findings between these two parameter spaces.

\subsection{Regular equilibria and their bifurcations}
\label{Bifurcationsofregularequilibria}

\noindent
Regular equilibria occur at smooth points of~$\cP_{\mu \ell}$
touching the parabolic cylinders~$\cH_{\lambda}^{-1}(h)$.
They correspond to invariant $2$--tori in three degrees
of freedom and their bifurcations are triggered by two of these
equilibria, or (all) three, meeting.
All tangent planes of the parabolic cylinder
$\cH_{\lambda}^{-1}(h)$ are parallel to the $Y$--axis while a tangent
plane of the surface of revolution~$\cP_{\mu \ell}$ can be parallel to the
$Y$--axis only at points $(R, X, Y)$ with $Y=0$.
Thus, the two surfaces can only touch at points in the $(Y{=}0)$--plane
and for this it is necessary and sufficient that the parabola
\begin{align}
   \cH_{\lambda}^{-1}(h) \cap \{ Y=0 \} \; : \qquad &
   X \;\; = \;\; X_1(R)
   \;\; = \;\; 
   h \; - \; \lambda R \; - \; \frac{\kappa}{2} R^2
\label{parabola11-2}
\intertext{has the same derivative $X' = X'(R)$ as the variety}
\label{phasespacesection11-2}
   \cP_{\mu \ell} \cap \{ Y=0 \} \; : \qquad &
   X^2 \;\; = \;\; X_2(R)^2 \;\; = \;\; (R^2 - \mu^2)(R - \ell)
   \enspace , \quad R \ge R_{\min}
\end{align}
where we choose
\begin{displaymath}
   X_2(R) \;\; = \;\; \sqrt{(R^2 - \mu^2)(R - \ell)}
\end{displaymath}
as the `upper' side.
Note that the value $h$ of the energy can always be adjusted to ensure
that the two derivatives become equal at a common point $(R, X)$ if
$R \geq R_{\min}$.
To compute the value of~$R$ we equate the slope
\begin{equation}
\label{slopeparabola}
   X_1^{\prime} \;\; = \;\; - \lambda \; - \; \kappa R
\end{equation}
of~\eqref{parabola11-2} with the slope of the
variety~\eqref{phasespacesection11-2}.
Adjusting~$h$ to actually have a common point of \eqref{parabola11-2}
and~\eqref{phasespacesection11-2} we can use (the square of)
\begin{equation}
\label{slopecubic}
   2 X_2 X_2^{\prime} \;\; = \;\;
   3 R^2 \; - \; 2 \ell R \; - \; \mu^2
\end{equation}
to obtain $R = R(\mu, \ell; \lambda, \kappa)$ from
\begin{equation}
\label{Rvaluesofequilibria}
\begin{array}{rcl}
   0 & = & (2 X_2 X_1')^2 \; - \; (2 X_2 X_2')^2  \\[3pt]
   & = &  4(\kappa R + \lambda)^2 (R - \ell) (R^2 - \mu^2) \; - \;
   (3 R^2 - 2 \ell R - \mu^2)^2
   \enspace .
\end{array}
\end{equation}
An alternative method of obtaining
equation~\eqref{Rvaluesofequilibria} that gives
the equilibria of the system is to search for
double roots
of the polynomial $F(R) = X_1^2(R) - X_2^2(R)$,
cf.\ equation~\eqref{F(R)} below.
The polynomial $F(R)$ is used
in \sref{bifurcation diagram kappa != 0}
to compute the bifurcations in the
system, see lemma~\ref{lemma bd char}.

\begin{proposition}
  Equation~\eqref{Rvaluesofequilibria} has at most three solutions in
  the interval $[R_{\min},\infty[$.
  There can be either one or three distinct solutions and in
  both cases the number of elliptic equilibria (centres)
  exceeds the number of hyperbolic equilibria (saddles) by~$1$.
\end{proposition}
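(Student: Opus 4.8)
The plan is to translate everything into a statement about the polynomial on the right-hand side of~\eqref{Rvaluesofequilibria}, which I write as
\[
   P(R) \;\; = \;\; 4(\kappa R + \lambda)^2 (R - a_1)(R - a_2)(R - a_3)
   \; - \; g(R)^2 \enspace,
\]
where $a_1 \ge a_2 \ge a_3$ are the reals $\{\mu, -\mu, \ell\}$ (so that $(R^2-\mu^2)(R-\ell) = (R-a_1)(R-a_2)(R-a_3)$ and $a_1 = R_{\min}$) and $g(R) = 3R^2 - 2\ell R - \mu^2$ is the derivative of $(R^2-\mu^2)(R-\ell)$. First I would record the geometric meaning of the roots of~$P$ in $\opin{R_{\min}}{\infty}$. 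Parametrising $\cP_{\mu \ell}$ away from its tip by $(R, \theta)$ with $X = X_2(R) \cos\theta$ and $Y = X_2(R) \sin\theta$, where $X_2(R) = \Sqrt{(R^2-\mu^2)(R-\ell)}$, a critical point of $\cH_\lambda$ lies on a meridian $\theta \in \{ 0, \pi \}$ and satisfies $\lambda + \kappa R = - \cos\theta \, X_2'(R)$, with $X_2'(R) = g(R)/(2 X_2(R))$; squaring gives exactly~\eqref{Rvaluesofequilibria}. By Rolle's theorem both roots of $g$ lie below $a_1 = R_{\min}$, so $g(R) > 0$ and hence $X_2'(R) > 0$ for $R > R_{\min}$; therefore each root $R$ of $P = 4(R-a_1)(R-a_2)(R-a_3)[(\lambda+\kappa R)^2 - X_2'(R)^2]$ fixes $\operatorname{sign}(\cos\theta)$ uniquely and produces exactly one regular equilibrium (in the interior of each of the three regions of Proposition~\ref{prop:singtype} the tip is a smooth, non-critical point).

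For the bound I would use that $P$ has degree~$5$ with leading coefficient $4\kappa^2 > 0$ --- this is where $\kappa \ne 0$ enters. Writing $r_2 \in \opin{a_3}{a_2}$ for the smaller root of~$g$, one checks $P(a_3) = - g(a_3)^2 \le 0$, $P(r_2) = 4(\kappa r_2 + \lambda)^2 (r_2 - a_1)(r_2 - a_2)(r_2 - a_3) > 0$ because the cubic is positive on $\opin{a_3}{a_2}$, and $P(a_2) = - g(a_2)^2 \le 0$. Hence $P$ has (at least) two roots in $\opin{a_3}{a_2} \subseteq \opin{-\infty}{R_{\min}}$, so at most three of its five real roots can lie in $[R_{\min}, \infty[$. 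On the non-generic lines $\mu = \pm\ell$, $\mu = 0$ these two roots merge into a double root, which can be exhibited as an explicit square factor of~$P$, and the bound persists.

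To see that the number of equilibria is~$1$ or~$3$ and that the centres always outnumber the saddles by one, I would compare at each equilibrium $R_\ast$ the sign change of~$P$ with the type of the critical point. In the coordinates $(R, \theta)$ the Hessian of $\cH_\lambda$ at a critical point is diagonal, namely $\operatorname{diag}(\kappa + \cos\theta_\ast X_2''(R_\ast), - \cos\theta_\ast X_2(R_\ast))$, so the equilibrium is a centre precisely when $\cos\theta_\ast (\kappa + \cos\theta_\ast X_2''(R_\ast)) < 0$. On the other hand, exactly one of the two factors $\lambda + \kappa R \mp X_2'(R)$ of~$P$ vanishes at $R_\ast$, with derivative $\kappa + \cos\theta_\ast X_2''(R_\ast)$ there, while the other factor equals $- 2 \cos\theta_\ast X_2'(R_\ast)$; since $X_2(R_\ast), X_2'(R_\ast) > 0$, a short sign count shows that $P$ crosses from negative to positive at a centre and from positive to negative at a saddle. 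As $P(R_{\min}^+) < 0$ and $P(R) \to + \infty$, the equilibria, listed by increasing~$R$, must alternate centre, saddle, centre, \dots, beginning and ending with a centre. Their number is therefore odd --- hence $1$ or~$3$ by the previous paragraph --- and the centres exceed the saddles by exactly one. (Equivalently: $\cP_{\mu\ell}$ is diffeomorphic to~$\R^2$ and $\cH_\lambda$ is proper with a single end, at which it tends to $\operatorname{sign}(\kappa) \cdot \infty$, so Morse theory gives $\#\mathrm{centres} - \#\mathrm{saddles} = \chi(\R^2) = 1$.)

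The step I expect to be the main obstacle is this last sign bookkeeping --- keeping straight $\cos\theta_\ast = \pm 1$, the sign of $X_2'(R_\ast)$ and which of the two factors of~$P$ degenerates --- together with the non-generic situations where an equilibrium is degenerate (a multiple root of~$P$, i.e.\ a coinciding centre--saddle pair, which is exactly the locus analysed later by Lemma~\ref{lemma bd char}) or lies at a conical or cuspidal tip; the latter is dealt with by continuity from the interior of the three regions.
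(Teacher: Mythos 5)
Your proof is correct and follows essentially the same route as the paper: both locate two roots of the degree-$5$ polynomial below $R_{\min}$ by sign evaluations at the smaller critical point $R_*=r_2$ of the cubic $(R^2-\mu^2)(R-\ell)$, and both read off the centre/saddle alternation from the sign of the derivative of that polynomial at its remaining simple roots together with the positivity of the leading coefficient $4\kappa^2$. The one place you go beyond the paper is that you actually \emph{prove} (via the Hessian in $(R,\theta)$--coordinates and the factorisation $P=4X_2^2\,[(\lambda+\kappa R)-X_2'][(\lambda+\kappa R)+X_2']$) the equivalence ``$P'(R_*)>0$ iff centre'', which the paper's proof only asserts; your parenthetical Morse-theoretic argument $\#\mathrm{centres}-\#\mathrm{saddles}=\chi(\R^2)=1$ is a nice independent confirmation of the last claim.
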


\begin{proof}
Let $S(R)$ denote the fifth order polynomial in $R$ appearing
in~\eqref{Rvaluesofequilibria}.
Consider the root $R_* = \frac13 (\ell - (\ell^2 + 3 \mu^2)^{1/2})$
of the polynomial $3R^2 - 2\ell R - \mu^2$ in~\eqref{slopecubic}.
Then one checks that $R_* \le R_{\min}$ and $S(R_*) \ge 0$ while
$S(R_{\min}) \le 0$.
This implies that $S$ must have at least two real roots in
$\opin{{-}\infty}{R_{\min}}$, so it can have either $1$ or~$3$
distinct real roots in $[R_{\min}, \infty[$.

Multiple roots are characterised by $S'(R) = 0$ whence distinct
roots have $S'(R) \ne 0$ corresponding to elliptic equilibria if
$S'(R) > 0$ and to hyperbolic equilibria if $S'(R) < 0$.
The largest root of $S(R)$ must have $S'(R) > 0$ and is
thus elliptic while the two smaller ones (if they exist and
are distinct) alternate between hyperbolic and elliptic.
\end{proof}

\begin{remark}
\label{frequencyratio}
  The regular equilibria $(R, X, 0)$ of the reduced system yield
  invariant $2$--tori in three degrees of freedom and it depends on
  the ratio between the internal frequencies
  \begin{subequations}
    \label{internalfrequencies}
    \begin{align}
      \label{internalfrequency-N}
      \frac{\partial H}{\partial N} \;\; & = \;\; \beta \, - \, \alpha
      \; + \; (\gamma_1 - \gamma_2) \mu \, + \, (\gamma_2 - \gamma_3) \ell
      \; + \; (\lambda_1 - \lambda_2) R
      \intertext{and}
      \label{internalfrequency-J}
      \frac{\partial H}{\partial J} \;\; & = \;\;
      2 \left( \alpha \, + \, \gamma_2 \mu \, + \, \gamma_3 \ell
      \, + \, \lambda_2 R \right)
    \end{align}
  \end{subequations}
  (defined by the normal form $H = H^{\delta}_{NJ}$ obtained
  from~\eqref{symmetricdetunedhamiltonian} by replacing
  $L \mapsto 2 J - N$) whether the resulting trajectories are
  periodic or quasi-periodic.
\end{remark}

\noindent
For a centre and a saddle to meet and vanish in a centre-saddle
bifurcation the pa\-ra\-bo\-la~\eqref{parabola11-2} has to touch the
variety~\eqref{phasespacesection11-2} at an inflection point.
This is decided by the derivative $X_1^{\prime \prime} = - \kappa$
of~\eqref{slopeparabola} and the derivative
\begin{equation}
\label{secondslopecubic}
   X_2 X_2^{\prime \prime} \; + \; \left( X_2^{\prime} \right)^2 \;\; = \;\;
   3 R \; - \; \ell
\end{equation}
(of half) of~\eqref{slopecubic}.
Equating the two derivatives or, equivalently, finding the triple
roots of~$F(R)$ gives a polynomial expression whose roots correspond
to bifurcation points.
In particular, we obtain the polynomial
\begin{equation}
\label{Rvaluesofbifurcations}
\begin{array}{rcl}
   0 & = & \kappa^4 R^4 \; + \; \kappa^2 (4 \lambda \kappa - 7) R^3
   \; + \;
   3 (2 \lambda^2 \kappa^2 - 4 \lambda \kappa + \kappa^2 \ell + 3) R^2
   \\ & & \!\!\!\! \mbox{}
   + \; (4 \lambda^3 \kappa - 6 \lambda^2 + \kappa^2 \mu^2 +
   4 \lambda \kappa \ell - 6 \ell) R \; + \; \lambda^4 \, - \,
   \kappa^2 \mu^2 \ell \, + \, 2 \lambda^2 \ell \, + \, \ell^2
   \enspace.
\end{array}
\end{equation}
Inserting the solutions
$R = R(\mu, \ell; \lambda, \kappa) \geq R_{\min}$
into~\eqref{Rvaluesofequilibria} leads to three surfaces
$\lambda = \lambda(\mu, \ell)$ of centre-saddle bifurcations
\CS{k}, $k=1,2,3$ in parameter space (recall that we consider
$\kappa$ to be fixed).

This computation is done in \sref{bifurcation diagram kappa != 0} by
finding the triple roots of the polynomial $F(R)$ in~\eqref{F(R)},
see lemma~\ref{lemma bd char}.
Two surfaces parametrising centre-saddle bifurcations emanate from
each of the three curves of subcritical Hamiltonian Hopf
bifurcations \HHsub{k}, $k=1,2,3$ discussed in
\sref{HamiltonianHopfbifurcations} below, see
\fref{fig:3dpictureofbifurcationset}.
In particular, the surface~\CS{1} extends between \HHsub{2}
and~\HHsub{3}, the surface~\CS{2} extends between \HHsub{1}
and~\HHsub{3} and the surface~\CS{3} extends between \HHsub{1}
and~\HHsub{2}.
Each surface~\CS{k}, $k=1,2,3$ furthermore extends until a curve
segment~\Cusp{k} parametrising cusp bifurcations.
A fourth surface \CS{4} of centre-saddle bifurcations has as
boundary the union of the segments \Cusp{k}, $k=1,2,3$.

\begin{figure}[t!]
  \centering
  \includegraphics[width=0.48\textwidth]{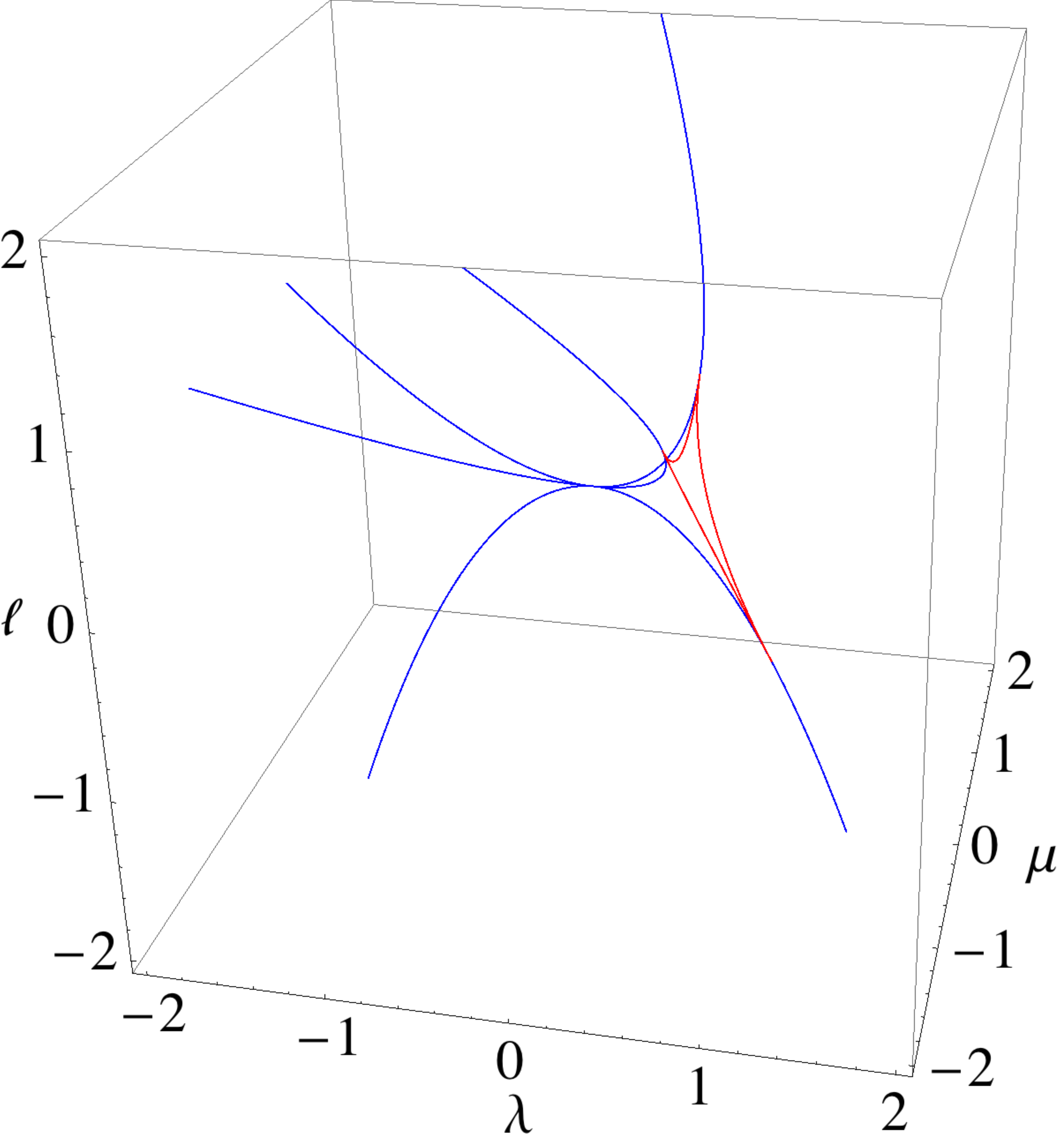}
  \hfill
  \includegraphics[width=0.48\textwidth]{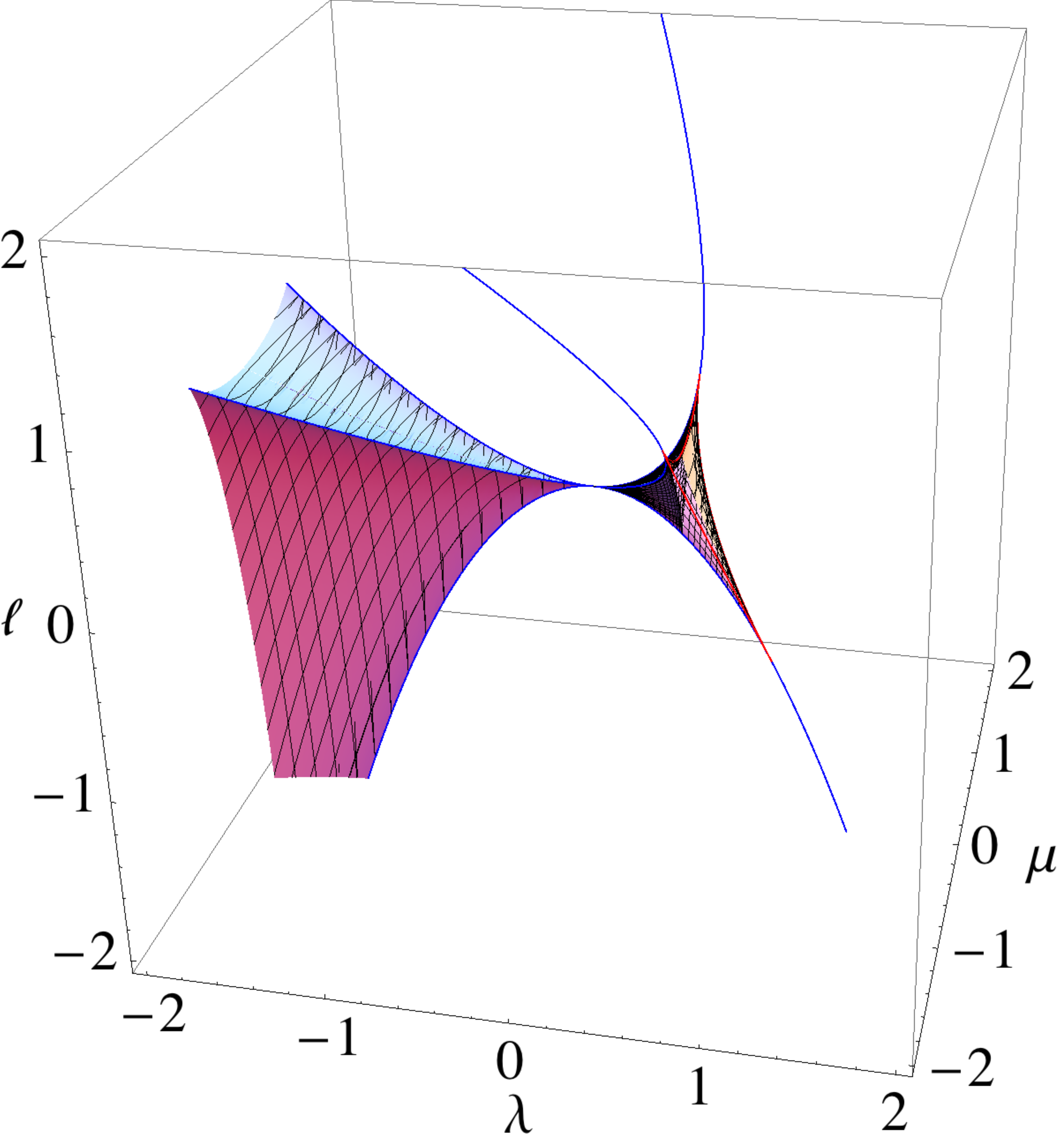}
  \caption{
    Bifurcation set for $\kappa = 1$.
    (a) Hamiltonian Hopf bifurcations (blue curves) 
    and cusp bifurcations (red curves).
    (b) Centre-saddle bifurcations are represented
    by the shown surfaces.
    The parts of the Hamiltonian Hopf bifurcation curves
    adjacent to centre-saddle bifurcations are
    subcritical Hamiltonian Hopf.
    The non-adjacent parts are supercritical.
    The transition takes place at the degenerate Hamiltonian
    Hopf bifurcations where the (red) cusp bifurcations emanate
    and the common point $(\lambda, \mu, \ell) = (0, 0, 0)$ of the
    three blue curves corresponds to the central equilibrium in
    $1{:}1{:}{-}2$~resonance.
  \label{fig:3dpictureofbifurcationset} }
\end{figure}

The non-degeneracy condition of a centre-saddle bifurcation requires
that the derivative of~\eqref{secondslopecubic} yields $X_2''' \neq 0$
(since the parabola~\eqref{parabola11-2} has zero third derivative).
Looking for $X_2''' = 0$ we find $\kappa(\lambda + \kappa R) = 1$.
The resulting
\begin{equation}
\label{Rvaluesofcusps}
   R \;\; = \;\; \frac{1}{\kappa^2} \; - \; \frac{\lambda}{\kappa}
\end{equation}
yields at the singular points $R=0$ and $R = \ell = |\mu|$
degenerate Hamiltonian Hopf bifurcations, see
sections~\ref{HamiltonianHopfbifurcations} and~\ref{degenerateHHb}
below.
The theory in~\cite{jcvdm96} predicts that two curves of cusp
bifurcations emanate from each degenerate Hamiltonian Hopf bifurcation.
The three degenerate Hamiltonian Hopf bifurcations form the vertices
of a curvilinear triangle with the families \Cusp{k}, $k=1,2,3$,
forming the corresponding edges.
The cusp bifurcations are non-degenerate.
Indeed, $X^{\prime \prime \prime}_2 = 0$ turns the second
derivative of~\eqref{secondslopecubic} into
$X_2 X_2^{(4)} + 3 (X^{\prime \prime}_2)^2 = 0$ whence
identifying $X_2^{\prime \prime}$ with
$X_1^{\prime \prime} \equiv -\kappa$ yields
\begin{displaymath}
   X_2^{(4)} \;\; = \;\; \frac{- 3\kappa^2}{X_2}
   \;\; \neq \;\; 0
\end{displaymath}
and the sign --- positive on the lower arc and negative on the upper
arc, i.e.\ the same sign as the second derivative of~$\cP_{\mu \ell}$
with respect to~$Y$ --- shows that this is the case of a cusp
bifurcation governed by the singularity~$A_3^+$, see~\cite{hh07}.
Note that the surfaces defined by $R \geq R_{\min}$,
\eqref{Rvaluesofequilibria}, \eqref{Rvaluesofbifurcations} and the
exclusion of the cusp lines indeed parametrise centre-saddle
bifurcations since the latter are given by exactly those parameter
values where the non-degeneracy condition is not satisfied.

\subsection{Hamiltonian Hopf bifurcations}
\label{HamiltonianHopfbifurcations}

\noindent
A \emph{singular} point $(R_{\min}, 0, 0)$ of~$\cP_{\mu \ell}$
(cone or cusp) is always an equilibrium.
We get bifurcations when $\cH_{\lambda}^{-1}(h^*)$, the parabolic
surface given by
\begin{equation}
\label{parabolaHHb}
   X \;\; = \;\; h^* \; - \; \lambda R \; - \; \frac{\kappa}{2} R^2
\end{equation}
(with
$h^* = \lambda R_{\min} + \frac{1}{2} \kappa R_{\min}^2
 = \lambda \ell + \frac{1}{2} \kappa \ell^2$
for $R_{\min} = \ell = \pm \mu$ and $h^* = 0$ for $R_{\min} = 0$),
touches $\cP_{\mu \ell}$ at the singular point, entering the
singular point with a tangent line that coincides with that of
the cone of~$\cP_{\mu \ell}$ (for the cusp this happens only when
$\lambda = \mu = \ell = 0$, i.e.\ at the $1{:}1{:}{-}2$~resonant
equilibrium).
Since the isotropy groups~$\T^1$ are not discrete we expect these
bifurcations to be Hamiltonian Hopf bifurcations.
The type of  intersection of $\cH_{\lambda}^{-1}(h^*)$, given
by~\eqref{parabolaHHb}, with $\cP_{\mu \ell}$ separates stable
equilibria $(R_{\min}, 0, 0)$ on~$\cP_{\mu \ell}$, where
$\cH_{\lambda}^{-1}(h^*)$ stays outside of $\cP_{\mu \ell}$,
from unstable equilibria, for which the intersection of
$\cH_{\lambda}^{-1}(h^*)$ with $\cP_{\mu \ell}$ yields their
stable$=$un\-stable manifolds in~$\cP_{\mu \ell}$.
In three degrees of freedom these correspond to elliptic and
hyperbolic periodic orbits; using the canonical equations of
motion we immediately see that these $\Phi$--orbits~$\T^1$
indeed do not consist of equilibria (they form the three
normal modes).

Correspondingly, for the supercritical type of the Hamiltonian Hopf
bifurcation the touching parabolic surface~$\cH_{\lambda}^{-1}(h^*)$
stays outside of $\cP_{\mu \ell}$ at the bifurcation parameter
(and the bifurcating equilibrium is dynamically stable), while for
the subcritical type $\cH_{\lambda}^{-1}(h^*)$ ~yields the
stable$=$un\-stable manifold of the bifurcating equilibrium
(which is therefore dynamically unstable).
The unstable periodic orbits resulting from the Hamiltonian Hopf
bifurcations largely determine the monodromy of the system,
discussed in \sref{Monodromy} below.

\begin{remark}
\label{lowordernormalinternal}
  Reducing only the $\T^1$--action~\eqref{oscillatorsymmetry} turns
  the normal modes into regular equilibria in two degrees of freedom
  that undergo Hamiltonian Hopf bifurcations.
  The Krein collision that triggers a Hamiltonian Hopf bifurcation
  and all other criteria~\cite{HM02} can be checked in one degree
  of freedom, but that the Krein collision does not occur at
  frequency~$0$ can only be verified in two degrees of freedom.
  However, the latter is merely needed for an approximate
  $\T^1$--symmetry that allows to reduce to one degree of freedom
  (after an additional normalization) and in the present situation
  we already have imposed the $\T^1$--symmetry~\eqref{axialsymmetry}
  as an exact symmetry.
  The whole bifurcation analysis thus takes place in one degree of
  freedom.
  Note that without imposing the $\T^1$--symmetry generated by~$N$
  we would even have to check that a normal mode undergoing a
  Hamiltonian Hopf bifurcation does not have its internal frequency,
  the period, in (low order) normal-internal resonance with the
  normal frequencies at the Krein collision.
\end{remark}

\noindent
The analysis in \sref{bifurcation diagram kappa != 0}
shows that there are three $1$--parameter families of subcritical
Hamiltonian Hopf bifurcations \HHsub{k}, $k = 1, 2, 3$ that
join with three $1$--parameter families of supercritical Hamiltonian
Hopf bifurcations \HHsup{k}, $k = 1, 2, 3$; see
\fref{fig:3dpictureofbifurcationset}.
Their parametrisation is given in
proposition~\ref{description of bifurcation set}.
As discussed in \sref{Bifurcationsofregularequilibria} there are
two families of centre-saddle bifurcations emanating from each
family of subcritical Hamiltonian Hopf bifurcations; the families
of supercritical Hamiltonian Hopf bifurcations are isolated and
meet the corresponding subcritical ones at three degenerate
Hamiltonian Hopf bifurcations.

\subsection{Bifurcation diagram for $\kappa \ne 0$}
\label{bifurcation diagram kappa != 0}

\noindent
In this section we give a complete parametrisation of the
bifurcation diagram (shown in \fref{fig:3dpictureofbifurcationset})
of the system passing through a $1{:}1{:}{-}2$~resonance.
As announced after having obtained
equation~\eqref{Rvaluesofequilibria}
we consider the polynomial function
\begin{equation}
\label{F(R)}
   F(R) \;\; = \;\; X_1^2(R) \; - \; X_2^2(R) \;\; = \;\;
   \left( h - \lambda R - \frac{\kappa}{2} R^2 \right)^2
   \; - \; (R^2 - \mu^2) (R - \ell) \enspace .
\end{equation}
Recall that $X_1(R)$ represents the energy level set
$\cH_{\lambda}^{-1}(h)$, see~\eqref{parabola11-2}, while
$X_2(R)$ and $-X_2(R)$ represent the upper and lower side
of $P_{\mu \ell} \cap \{ Y = 0 \}$ respectively,
see~\eqref{phasespacesection11-2}. 
Then we have the following characterisation of the bifurcation set of
the reduced Hamiltonian~\eqref{reducedhamiltonian}, which allows for a
complete and uniform approach to parametrising the bifurcation set.

\begin{lemma}\label{lemma bd char}
  The set of parameter values $(h,\lambda,\kappa,\mu,\ell)$ for
  which~\eqref{F(R)} has a triple root $R=a$ (that is,
  $F(a) = F'(a) = F''(a) = 0${\rm )} with
  $a \ge R_{\min} = \max(|\mu|, \ell)$, describes the set of
  centre-saddle and Hamiltonian Hopf bifurcations.
  In particular, supercritical Hamiltonian Hopf bifurcations are
  characterised by $a = R_{\min}$ with $F'''(a) > 0$ and
  subcritical Hamiltonian Hopf bifurcations by
  $a = R_{\min}$ with $F'''(a) < 0$, while centre-saddle bifurcations
  are characterised by $a > R_{\min}$ with $F'''(a) \ne 0$.
  Quadruple roots $R = a > R_{\min}$, that is, moreover $F'''(a) = 0$
  but $F^{(4)}(a) \ne 0$, correspond to cusp bifurcations and
  quadruple roots $R = a = R_{\min}$ correspond to degenerate
  Hamiltonian Hopf bifurcations.
\end{lemma}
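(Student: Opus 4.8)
The plan is to translate each bifurcation phenomenon into a statement about the coincidence of roots of $F(R)=X_1^2(R)-X_2^2(R)$, using the geometric picture already established: equilibria of $\cH_\lambda$ are the points where the parabola $X=X_1(R)$ meets the curve $X^2=X_2(R)^2$, and the order of contact at such a point governs the dynamical type. First I would record the elementary dictionary between contact order and root multiplicity. A point $(R,X)$ with $R=a$, $X=X_1(a)=\pm X_2(a)\ne 0$ lies on both $\cP_{\mu\ell}\cap\{Y=0\}$ and $\cH_\lambda^{-1}(h)\cap\{Y=0\}$; since near such a point one side of $\cP_{\mu\ell}$ is the graph of a smooth function $X=\pm X_2(R)$, the two curves have contact of order $\ge k$ there precisely when $F(R)=X_1^2-X_2^2=(X_1-X_2)(X_1+X_2)$ vanishes to order $\ge k$ at $R=a$ — the regular factor $X_1\pm X_2$ being nonzero and smooth. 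Thus: a simple root is a transverse crossing (no equilibrium of $\cH_\lambda$ but merely the boundary circle of a level set), a double root is a generic equilibrium (centre if the level parabola stays on the outside, saddle if it crosses), a triple root is the merging of two equilibria, i.e. a centre-saddle, and a quadruple root is the merging of three. This already matches the computations of Section~\ref{Bifurcationsofregularequilibria}, where equating first derivatives gave~\eqref{Rvaluesofequilibria} (double root) and equating second derivatives gave~\eqref{Rvaluesofbifurcations} (triple root).

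Next I would treat the case $a>R_{\min}$, i.e.\ bifurcations at smooth points of $\cP_{\mu\ell}$. Here $X_2$ is smooth and nonvanishing in a neighbourhood of $a$ (recall $X_2(R_{\min})=0$ only when $R_{\min}=a_1$, the largest of $\{\mu,-\mu,\ell\}$, which is excluded), so $F$ is a genuine polynomial contact measure. A triple root $F(a)=F'(a)=F''(a)=0$ with $F'''(a)\ne 0$ is exactly the inflection-point tangency described before~\eqref{secondslopecubic}, and the sign computation there — identifying $X_1''=-\kappa$ with $X_2''$ and obtaining $X_2^{(4)}=-3\kappa^2/X_2\ne 0$ — shows the $A_2$ (fold) normal form holds, so this is a non-degenerate centre-saddle. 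When additionally $F'''(a)=0$ but $F^{(4)}(a)\ne 0$ we get a quadruple root; this forces $X_2'''=0$, i.e.\ $\kappa(\lambda+\kappa R)=1$ as in~\eqref{Rvaluesofcusps}, and the paper has already verified $X_2^{(4)}=-3\kappa^2/X_2\ne 0$ with the correct sign, giving the $A_3^+$ cusp-bifurcation normal form. So on $\{a>R_{\min}\}$ the stratification by root multiplicity $3$ versus $4$ is precisely centre-saddle versus cusp, and the non-degeneracy conditions $F'''\ne 0$, $F^{(4)}\ne 0$ are the standard ones.

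The more delicate case is $a=R_{\min}$, where $\cP_{\mu\ell}$ has a cone or cusp singularity and the equilibrium is the singular tip — a normal mode (or the central equilibrium) in the reconstructed picture, undergoing a Hamiltonian Hopf rather than a centre-saddle bifurcation. Here I expect the main obstacle: $F$ no longer faithfully encodes contact order because $X_2$ is not smooth at $R_{\min}$ (it behaves like $(R-R_{\min})^{1/2}$ at a cone, $(R-R_{\min})^{3/2}$ at the cusp), so one must argue more carefully. The clean way is to keep working with the polynomial identity $F(R)=(h-\lambda R-\tfrac\kappa2 R^2)^2-(R^2-\mu^2)(R-\ell)$ directly: the choice $h=h^*=\lambda R_{\min}+\tfrac12\kappa R_{\min}^2$ from~\eqref{parabolaHHb} makes $X_1(R_{\min})=0$, so $F(R_{\min})=0$ automatically, and $F(R)$ then has the form $F(R)=(R-R_{\min})\cdot G(R)$ with $G$ a quartic; the extra orders of vanishing of $F$ at $R_{\min}$ measure how many roots of $G$ collide with $R_{\min}$. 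A double root $F(R_{\min})=F'(R_{\min})=0$ means $X_1'(R_{\min})$ matches the appropriate one-sided slope of $X^2=X_2^2$ at the tip — for the cone this is the condition that the level parabola enters tangent to the cone, which is the Hamiltonian Hopf locus described in Section~\ref{HamiltonianHopfbifurcations}; the sign of $F'''(R_{\min})$ (equivalently, whether the parabola exits the cone on the inside or the outside, i.e.\ whether $\cH_\lambda^{-1}(h^*)$ does or does not cut out the stable$=$unstable manifold) distinguishes subcritical ($F'''<0$) from supercritical ($F'''>0$), matching the dichotomy stated just before Remark~\ref{lowordernormalinternal}. A triple collision $F(R_{\min})=F'(R_{\min})=F''(R_{\min})=0$ is the simultaneous occurrence of the Hamiltonian Hopf tangency and a centre-saddle colliding with the tip, i.e.\ the degenerate Hamiltonian Hopf bifurcation from which the cusp curves $\Cusp{k}$ emanate — consistent with~\eqref{Rvaluesofcusps} evaluated at $R=0$ or $R=\ell=|\mu|$. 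I would phrase this part by a local analysis of the two curves near the tip, using that the cone is locally $X^2=c(R-R_{\min})$ with $c=(a_1-a_2)(a_1-a_3)>0$ (and the cusp is $X^2=(R-R_{\min})^3$), reading off the contact order of the level parabola against this model and checking it equals the vanishing order of $F$ at $R_{\min}$ minus one; the Krein-collision and other Hamiltonian Hopf criteria needed for genuine (degenerate) Hamiltonian Hopf behaviour are supplied by Remark~\ref{lowordernormalinternal} together with~\cite{HM02} and~\cite{jcvdm96}. Finally I would note that these cases are exhaustive — any bifurcation of $\cH_\lambda$ is either a collision of equilibria at a smooth point of $\cP_{\mu\ell}$ or a change of type of the tip equilibrium, and both are captured by a multiple root of $F$ at some $a\ge R_{\min}$ — which completes the characterisation.
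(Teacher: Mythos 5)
Your treatment of the regular case $a>R_{\min}$ is correct and essentially the paper's argument: at a point with $X_1(a)=\pm X_2(a)\neq 0$ one factor of $F=(X_1-X_2)(X_1+X_2)$ is smooth and non-vanishing, so the multiplicity of the root of $F$ equals the order of contact of the parabola with the relevant branch of $\cP_{\mu\ell}\cap\{Y=0\}$, and multiplicities $3$ and $4$ give non-degenerate centre-saddle and cusp bifurcations respectively.

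The singular case $a=R_{\min}$, however, contains a genuine error: your root-multiplicity dictionary at the tip is shifted by one, and as stated it contradicts the lemma you are proving. At a conical tip one has $X_2^2(R)=(R-R_{\min})^2\,(R-a_3)$ with $a_3<R_{\min}$, and at the cuspidal tip $X_2^2(R)=R^3$; in either case $(X_2^2)(R_{\min})=(X_2^2)'(R_{\min})=0$. Hence, once $h$ is chosen so that the level set passes through the tip, i.e.\ $X_1(R_{\min})=0$, both
$F(R_{\min})=X_1(R_{\min})^2=0$ and
$F'(R_{\min})=2X_1(R_{\min})X_1'(R_{\min})-(X_2^2)'(R_{\min})=0$
hold \emph{automatically}, independently of $X_1'(R_{\min})$. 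Your claim that the double root $F(R_{\min})=F'(R_{\min})=0$ expresses that $X_1'(R_{\min})$ matches the one-sided slope of the cone is therefore false --- the double root carries no tangency information. The tangency (Krein collision, hence the Hamiltonian Hopf locus) is the \emph{next} condition
$F''(R_{\min})=2X_1'(R_{\min})^2-(X_2^2)''(R_{\min})=0$,
which for $R_{\min}=\ell=\pm\mu$ reads $(\lambda+\kappa\ell)^2=2\ell$ and reproduces~\eqref{firstderivativeellmu}; so Hamiltonian Hopf bifurcations are triple roots at $R_{\min}$ and the degenerate ones are quadruple roots, exactly as the lemma asserts. With your shifted dictionary the natural stability discriminator at the tip would be the sign of $F''(R_{\min})$ (which is indeed how the paper uses it in \sref{amended bd} to detect unstable normal modes), and the sign of $F'''(R_{\min})$ separating sub- from supercritical bifurcations would be attached to the wrong stratum. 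To repair the argument you must first record the automatic vanishing of $F$ and $F'$ at a singular tip and only then interpret $F''(R_{\min})=0$ as the bifurcation condition and the sign of $F'''(R_{\min})$ as deciding whether the level set $\cH_{\lambda}^{-1}(h^*)$ touches $\cP_{\mu\ell}$ from outside (supercritical) or cuts out the stable$=$unstable manifold (subcritical).
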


\begin{proof}
Suppose that $R=a$ is a triple root of $F(R)$.
The relation $F(a) = 0$ gives $X_1(a) = \pm X_2(a)$.
The `$+$' sign corresponds to a common point of the energy level
set with the upper side of $P_{\mu \ell} \cap \{ Y = 0 \}$,
the `$-$' sign with the lower side.

Subsequently $F'(a) = 0$ gives $X_1(a)X_1'(a) = X_2(a)X_2'(a)$.
If $X_1(a) = X_2(a) = 0$ then the previous equation is satisfied.
If $X_1(a) = \pm X_2(a) \ne 0$ then $X_1'(a) = \pm X_2'(a)$, implying
that the energy level set touches the corresponding side
of $P_{\mu \ell} \cap \{ Y = 0 \}$.
Since we consider a triple root $a$ of~$F$ the last implication
$X_1'(a) = \pm X_2'(a)$ also follows if $X_1(a) = X_2(a) = 0$
since $F''(a) = 0$ gives
\begin{equation}
\label{F''(a)=0}
   (X_1')^2(a) \, - \, (X_2')^2(a) \; + \;
   X_1(a) X_1''(a) \, - \, X_2(a) X_2''(a)
   \;\; = \;\; 0
   \enspace .
\end{equation}
As $X_2(a) = 0$ implies $a = R_{\min}$ the energy level set not only
touches $P_{\mu \ell} \cap \{ Y = 0 \}$ at the point $(R_{\min},0)$
in this case, but furthermore the latter is singular, since for
non-singular points we have $X_2'(R_{\min}) = \infty$.
Dynamically this corresponds to a Hamiltonian Hopf bifurcation.
Finally
\begin{displaymath}
   F'''(R) \;\; = \;\; 2 X_1(R) X_1'''(R) \; + \; 6 X_1'(R) X_1''(R)
   \; - \; 2 X_2(R) X_2'''(R) \; - \; 6 X_2'(R) X_2''(R)
\end{displaymath}
and for $a = R_{\min}$ we get
\begin{displaymath}
   F'''(R_{\min}) \;\; = \;\; 6 X_1'(R_{\min}) X_1''(R_{\min})
   \; - \; 6 X_2'(R_{\min}) X_2''(R_{\min}) \enspace .
\end{displaymath}
If the Hamiltonian Hopf bifurcation takes place by a tangency at
the upper side, then we find that $F'''(R_{\min}) > 0$ implies
$X_1'' > X_2''$ so the level set~$\cH_{\lambda}^{-1}(h)$ touches
from outside and we have a supercritical Hamiltonian Hopf
bifurcation.
If the tangency is with the lower side, then $X_1'' < -X_2''$ so
the level set~$\cH_{\lambda}^{-1}(h)$ touches from outside and
we again have a supercritical Hamiltonian Hopf bifurcation.
Correspondingly, when $F'''(R_{\min}) < 0$ the level
set~$\cH_{\lambda}^{-1}(h)$ touches from inside and the
Hamiltonian Hopf bifurcation is subcritical.
In between, when $F'''(R_{\min}) = 0$ (while
$F^{(4)}(R_{\min}) \ne 0$) we have a degenerate Hamiltonian Hopf
bifurcation (for more details on this see \sref{degenerateHHb}
below).

If $X_1(a) = \pm X_2(a) \ne 0$ then, as we have earlier seen, we also
have $X_1'(a) = \pm X_2'(a)$, therefore from~\eqref{F''(a)=0} we
find $X_1''(a) = \pm X_2''(a)$.
These conditions determine an inflection point between the two curves
and therefore a centre-saddle bifurcation provided that
$X_1'''(a) \ne \pm X_2'''(a)$.
If the last non-degeneracy condition is not satisfied then
we have a cusp bifurcation.
This corresponds to $F'''(a) = 0$ (in addition to
$F(a) = F'(a) = F''(a) = 0$) and $F^{(4)} = 6 \kappa^2 \ne 0$.
\end{proof}

\noindent
Given lemma~\ref{lemma bd char} we can compute the bifurcation diagram
of the system by finding the triple roots of~$F$ that lie in
$[R_{\min},\infty[$.

\begin{sidewaystable}[htbp]
\small
\begin{tabularx}{\textwidth}{llXX}
\toprule
Type \quad & Parametrisation of $(\lambda,\mu,\ell)$ & Parameter ranges & Remarks \\
\midrule
\multicolumn{4}{l}{\textit{Centre-Saddle}} \\
\midrule
\CS{1} &
$(\lambda, -\mu_-(a, \lambda), \ell_-(a, \lambda))$ &
$\lambda < \tfrac12$ and $0 < a < 1-\lambda-\sqrt{1-2\lambda}$
or $\tfrac12 < \lambda < 1$, $0 < a < 1-\lambda$ &
\quad \newline Extended by continuity to $\lambda=\frac{1}{2}$ \\
\CS{2} &
$(\lambda, \mu_-(a, \lambda), \ell_-(a, \lambda))$ & same as for \CS{1} & \\
\CS{3} &
$(\lambda, \pm \mu_+(a, \lambda), \ell_+(a, \lambda))$ &
$\lambda < \tfrac12$,  $a_0(\lambda) < a < 1-\lambda-\sqrt{1-2\lambda}$
\quad & \\
\CS{4} &
$(\lambda, \pm \mu_-(a, \lambda), \ell_-(a, \lambda))$ &
$\tfrac12 < \lambda < 1$ and $1-\lambda < a < a_0(\lambda)$ & \\
\midrule
\multicolumn{4}{l}{\textit{Cusp}} \\
\midrule
\Cusp{1} &
$(\lambda,-(\lambda-\sqrt{2\lambda-1}),1-\lambda-\sqrt{2\lambda-1})$ &
$\tfrac12 < \lambda < 1$ &
$\partial\CS{1} \cap \partial\CS{4}$ \\
\Cusp{2} &
$(\lambda,\lambda-\sqrt{2\lambda-1},1-\lambda-\sqrt{2\lambda-1})$ &
$\tfrac12 < \lambda < 1$ &
$\partial\CS{2} \cap \partial\CS{4}$ \\
\Cusp{3} & $(\tfrac12,\mu,\tfrac14+\mu^2)$ &
$\lambda \equiv \tfrac12$, $-\tfrac12 < \mu < \tfrac12$ &
$\partial\CS{3} \cap \partial\CS{4}$ \\
\midrule
\multicolumn{4}{l}{\textit{Hamiltonian Hopf (subcritical)}} \\
\midrule
\HHsub{1} &
$(\lambda,1-\lambda-\sqrt{1-2\lambda},1-\lambda-\sqrt{1-2\lambda})$ &
$\lambda < \tfrac12$ &
$\partial\CS{2} \cap \partial\CS{3}$ \\
\HHsub{2} &
$(\lambda,-(1-\lambda-\sqrt{1-2\lambda}),1-\lambda-\sqrt{1-2\lambda})$
\quad &
$\lambda < \tfrac12$ &
$\partial\CS{1} \cap \partial\CS{3}$ \\
\HHsub{3} &
$(\lambda,0,-\lambda^2)$ &
$\lambda < 1$ &
$\partial\CS{1} \cap \partial\CS{2}$ \\
\midrule
\multicolumn{4}{l}{\textit{Hamiltonian Hopf (supercritical)}} \\
\midrule
\HHsup{1} &
$(\lambda,1-\lambda+\sqrt{1-2\lambda},1-\lambda+\sqrt{1-2\lambda})$ &
$\lambda < \tfrac12$ &  \\
\HHsup{2} &
$(\lambda,-(1-\lambda+\sqrt{1-2\lambda}),1-\lambda+\sqrt{1-2\lambda})$ &
$\lambda < \tfrac12$ &  \\
\HHsup{3} &
$(\lambda,0,-\lambda^2)$ &
$\lambda > 1$ &  \\
\midrule
\multicolumn{4}{l}{\textit{Hamiltonian Hopf (degenerate)}} \\
\midrule
\HHdeg{1} &
$(\tfrac{1}{2}, \tfrac{1}{2}, \tfrac{1}{2})$ &
A single point &
$\partial \Cusp{2} \cap \partial \Cusp{3}$ \\
\HHdeg{2} &
$(\tfrac{1}{2}, -\tfrac{1}{2}, \tfrac{1}{2})$ &
A single point &
$\partial \Cusp{1} \cap \partial \Cusp{3}$ \\
\HHdeg{3}
& $(1, 0, -1)$ &
A single point &
$\partial \Cusp{1} \cap \partial \Cusp{2}$ \\
\bottomrule
\end{tabularx}
\caption{
  Bifurcations for the $1{:}1{:}{-}2$~resonant system in the case
  $\kappa=1$.
  The quantities $\mu_\pm=\mu_\pm(a,\lambda)$ and
  $\ell_\pm=\ell_\pm(a,\lambda)$ are given
  in \eqref{general mu and ell} in
  appendix~\ref{proof of bifurcation set}, while $a_0(\lambda)$ is
  the unique non-negative root of the cubic polynomial $g(a)$ given
  in~\eqref{g(a)}.
\label{tbl:bifurcations} }
\end{sidewaystable}

\begin{proposition}\label{description of bifurcation set}
  The bifurcation diagram for $\kappa = 1$ consists of the
  $2$--parameter families of centre-saddle bifurcations, the
  $1$--parameter families of (non-degenerate) Hamiltonian Hopf
  and cusp bifurcations and the three degenerate Hamiltonian
  Hopf bifurcations given in table~\ref{tbl:bifurcations}.
\end{proposition}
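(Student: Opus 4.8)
The plan is to make lemma~\ref{lemma bd char} fully explicit in the case $\kappa = 1$. By that lemma the entire bifurcation set in $(\lambda,\mu,\ell)$--space is the locus where $F$ has a root $R = a \ge R_{\min}$ of multiplicity at least three, so the first step is to solve $F(a) = F'(a) = F''(a) = 0$ and eliminate the auxiliary energy~$h$. Since $X_1(R) = h - \lambda R - \tfrac12 R^2$ is quadratic and $X_2^2(R) = (R^2 - \mu^2)(R-\ell)$ is a cubic whose coefficients depend affinely on~$\ell$ and only through~$\mu^2$ on~$\mu$, this system is mild: $F''(a) = 0$ is linear in~$h$ and fixes $X_1(a) = (\lambda+a)^2 - 3a + \ell$ (hence~$h$); then $F'(a) = 0$ is linear in~$\mu^2$ and fixes $\mu^2 = \mu^2(a,\lambda,\ell)$, still affine in~$\ell$; and substituting both into $F(a) = 0$ leaves one relation between $a$, $\lambda$ and~$\ell$ that is \emph{quadratic} in~$\ell$ with leading coefficient $1 - 2\lambda$. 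Solving this quadratic gives the two roots $\ell_\pm = \ell_\pm(a,\lambda)$, back-substitution gives $\mu^2 = \mu_\pm^2(a,\lambda)$ and hence $\mu = \pm\mu_\pm(a,\lambda)$, which are exactly the closed forms~\eqref{general mu and ell}; the families $(\lambda, \pm\mu_\pm(a,\lambda), \ell_\pm(a,\lambda))$, parametrised by $a$ and~$\lambda$, therefore sweep out the whole bifurcation set.

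The second step is to classify these points by the multiplicity criteria of lemma~\ref{lemma bd char}. Because $X_1$ is quadratic one has, for $\kappa = 1$, the identities $F^{(4)}(a) \equiv 6 \ne 0$ and $F'''(a) = -6\bigl(X_1'(a)+1\bigr) = 6(\lambda + a - 1)$, so $F'''(a) = 0$ precisely on the locus $a = 1 - \lambda$ --- the value $R = \kappa^{-2} - \lambda\kappa^{-1}$ of~\eqref{Rvaluesofcusps}. Hence along each family the slice $a = 1-\lambda$ gives the cusp bifurcations (there $F'''$ changes sign while $F^{(4)} \ne 0$), separating for each $k = 1,2,3$ the centre-saddle sheet \CS{k} from the common sheet \CS{4}, whereas the slice $a = R_{\min}$ gives the Hamiltonian Hopf locus. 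On the latter $X_2(R_{\min}) = 0$, so the tip of~$\cP_{\mu\ell}$ must be singular; proposition~\ref{prop:singtype} forces $\ell = |\mu|$, or $\mu = 0$ with $\ell \le 0$, and the residual triple-root equation $F''(R_{\min}) = 0$ reads $G''(R_{\min}) = 2\bigl(X_1'(R_{\min})\bigr)^2$, a quadratic equation whose solutions are $1 - \lambda \pm \sqrt{1-2\lambda}$ on the branch $\ell = |\mu|$ and which fixes $\ell = -\lambda^2$ on the branch $\mu = 0$. This produces the six Hamiltonian Hopf curves of table~\ref{tbl:bifurcations}, the sign of $F'''(R_{\min}) = -6\bigl(X_1'(R_{\min})+1\bigr)$ distinguishing subcritical \HHsub{k} (negative, touching from inside) from supercritical \HHsup{k} (positive, touching from outside); where this sign vanishes, i.e.\ where $R_{\min} = 1 - \lambda$, one obtains the three degenerate bifurcations \HHdeg{k}, and $F^{(4)} \ne 0$ there confirms the non-degeneracy assumed in~\cite{jcvdm96}.

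The final step is to fix the parameter ranges and the incidences recorded in the last column of the table. Imposing $a \ge R_{\min} = \max(|\mu|,\ell) \ge 0$ on the explicit branches cuts out the $\lambda$-- and $a$--intervals listed; the threshold $\lambda = \tfrac12$ surfaces because there the leading coefficient $1 - 2\lambda$ of the $\ell$--quadratic vanishes while simultaneously $1-\lambda-\sqrt{1-2\lambda}$ and $1-\lambda+\sqrt{1-2\lambda}$ collide, which is why \CS{1} and \CS{2} must be extended to $\lambda = \tfrac12$ by continuity and why \Cusp{3} lies on the line $\lambda \equiv \tfrac12$, and $\lambda = 1$ surfaces as the junction point~\HHdeg{3} on the branch $\mu = 0$. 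The non-negative root $a_0(\lambda)$ of the cubic $g(a)$ of~\eqref{g(a)} is the value of~$a$ at which the relevant branch switches from $(\mu_+,\ell_+)$ to $(\mu_-,\ell_-)$, separating~\CS{3} from~\CS{4}. All boundary identities ($\partial\CS{2}\cap\partial\CS{3} = \HHsub{1}$, $\partial\CS{3}\cap\partial\CS{4} = \Cusp{3}$, $\partial\Cusp{1}\cap\partial\Cusp{2} = \HHdeg{3}$, and so on) then fall out by taking the limits $a \to R_{\min}$ or $a \to 1-\lambda$ in the parametrisations. I expect the genuine effort to be bookkeeping rather than conceptual: deciding which of the sign-and-branch combinations $(\pm\mu_\pm,\ell_\pm)$ is actually realised over each open region of $(\mu,\ell)$--space, discarding spurious solutions with $a < R_{\min}$, and verifying that the sheets patch together exactly along the claimed curves near $\lambda = \tfrac12$, $\lambda = 1$ and the three degenerate Hopf points, where several constraints become active at once.
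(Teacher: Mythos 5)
Your proposal follows the same strategy as the paper's appendix~\ref{proof of bifurcation set}: implement lemma~\ref{lemma bd char} by solving the triple-root system $F(a)=F'(a)=F''(a)=0$ explicitly, parametrise the solution set by $(a,\lambda)$ with two branches coming from a residual quadratic, and then sort the points according to the sign of $F'''(a)$ and according to whether $a=R_{\min}$ or $a>R_{\min}$. The only methodological difference is the elimination order: the paper factorises $F=\frac{\kappa^2}{4}(R-a)^3(R-b)$, solves for $b$, $\ell$, $h$ and is left with a quadratic in $\mu^2$ (which forces it to treat $\lambda=0$ as a separate case, since it divides by $\lambda$), whereas you eliminate $h$ and then $\mu^2$ and are left with a quadratic in $\ell$; your order avoids the spurious special case $\lambda=0$ and exposes the genuine degeneration $\lambda=\tfrac12$ directly in the leading coefficient $1-2\lambda$. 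The identities you rely on are correct: $F'''(a)=-6\bigl(X_1'(a)+1\bigr)=6(\lambda+a-1)$, so quadruple roots sit exactly on $a=1-\lambda$ in agreement with~\eqref{Rvaluesofcusps}; at a singular tip the conditions $F(R_{\min})=F'(R_{\min})=0$ hold automatically once $h=h^*$, so $F''(R_{\min})=0$ becomes $2(\lambda+R_{\min})^2=6R_{\min}-2\ell$, whose solutions on $\ell=|\mu|$ and on $\mu=0$, $\ell\le0$ reproduce the six Hamiltonian Hopf curves, with the degeneration $R_{\min}=1-\lambda$ yielding the three points \HHdeg{k}. (The symbol $G$ in your second step is undefined; you mean $X_2^2$.)

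The gap is that what you defer as ``bookkeeping'' is the bulk of the paper's proof and part of the content of the proposition, since the table specifies not only the closed forms~\eqref{general mu and ell} but the exact $\lambda$-- and $a$--ranges on which each sign-and-branch combination is actually realised. Establishing these requires (i) the reality condition for the two branches, whose vanishing locus $(a+\lambda)^2=2a$ splits $\lambda<\tfrac12$ into the two $a$--windows bounded by $1-\lambda\mp\sqrt{1-2\lambda}$; (ii) the inequalities $a\ge\ell_\pm$ and $a^2\ge\mu_\pm^2\ge0$, some of which the authors only verify by computer algebra; and (iii) the cubic $g(a)$ of~\eqref{g(a)}, which arises precisely from $\mu_\pm^2\ge0$. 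On this last point your gloss is wrong: $a_0(\lambda)$ is not where a branch ``switches from $(\mu_+,\ell_+)$ to $(\mu_-,\ell_-)$'' and it does not separate \CS{3} from \CS{4} --- that is the role of \Cusp{3} on $\lambda=\tfrac12$, as recorded in the table. Rather, $a_0(\lambda)$ is the value of~$a$ at which $\mu_\pm^2$ vanishes, i.e.\ where the two mirror sheets $+\mu_\pm$ and $-\mu_\pm$ of \CS{3} (respectively \CS{4}) close up on the plane $\mu=0$ and the $a$--range terminates. So the skeleton of your argument is sound and coincides with the paper's, but as written the proposal does not yet establish the table.
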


\begin{figure}[htbp]
  \includegraphics[width=0.32\textwidth]{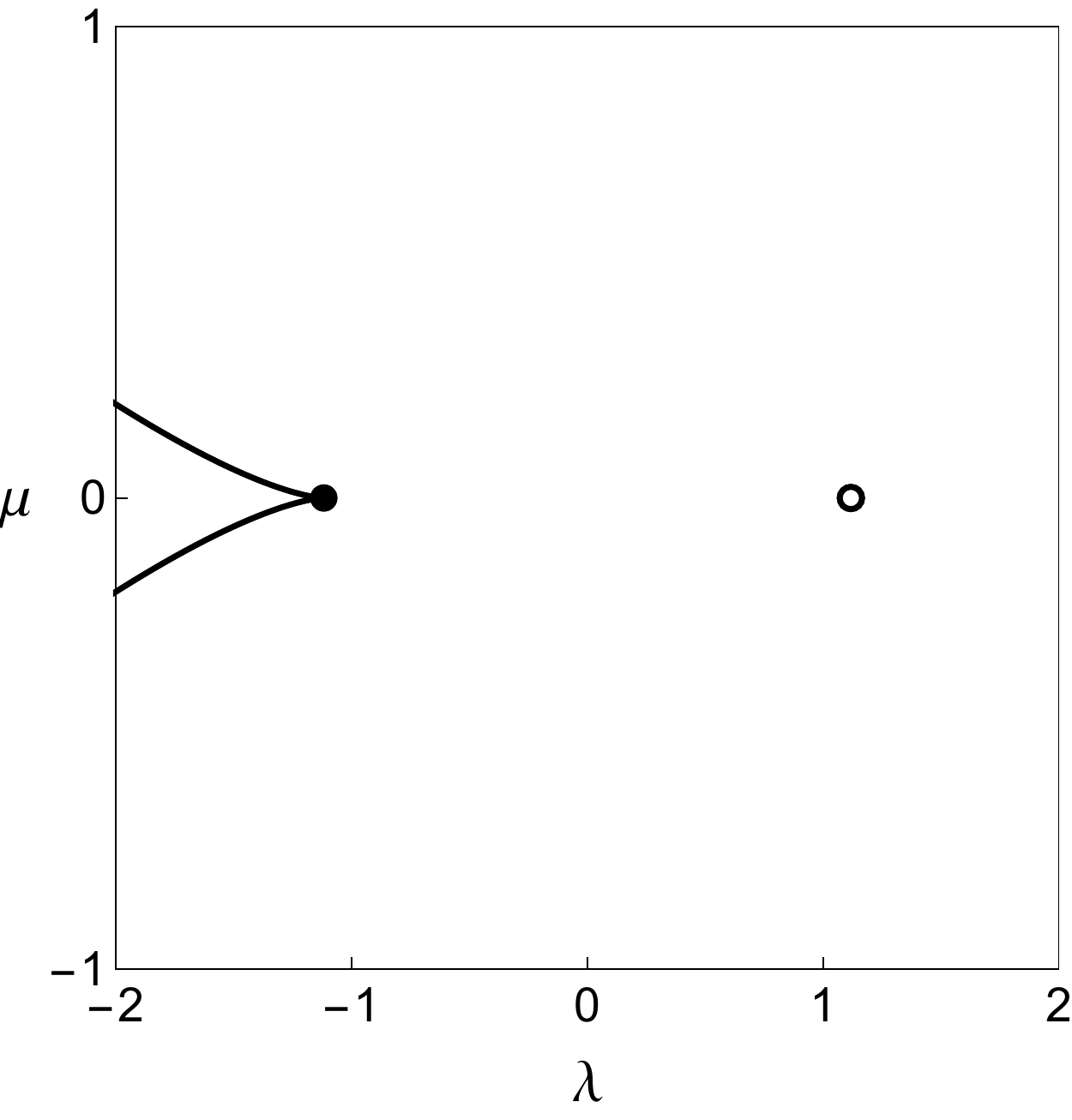}
  \hfill
  \includegraphics[width=0.32\textwidth]{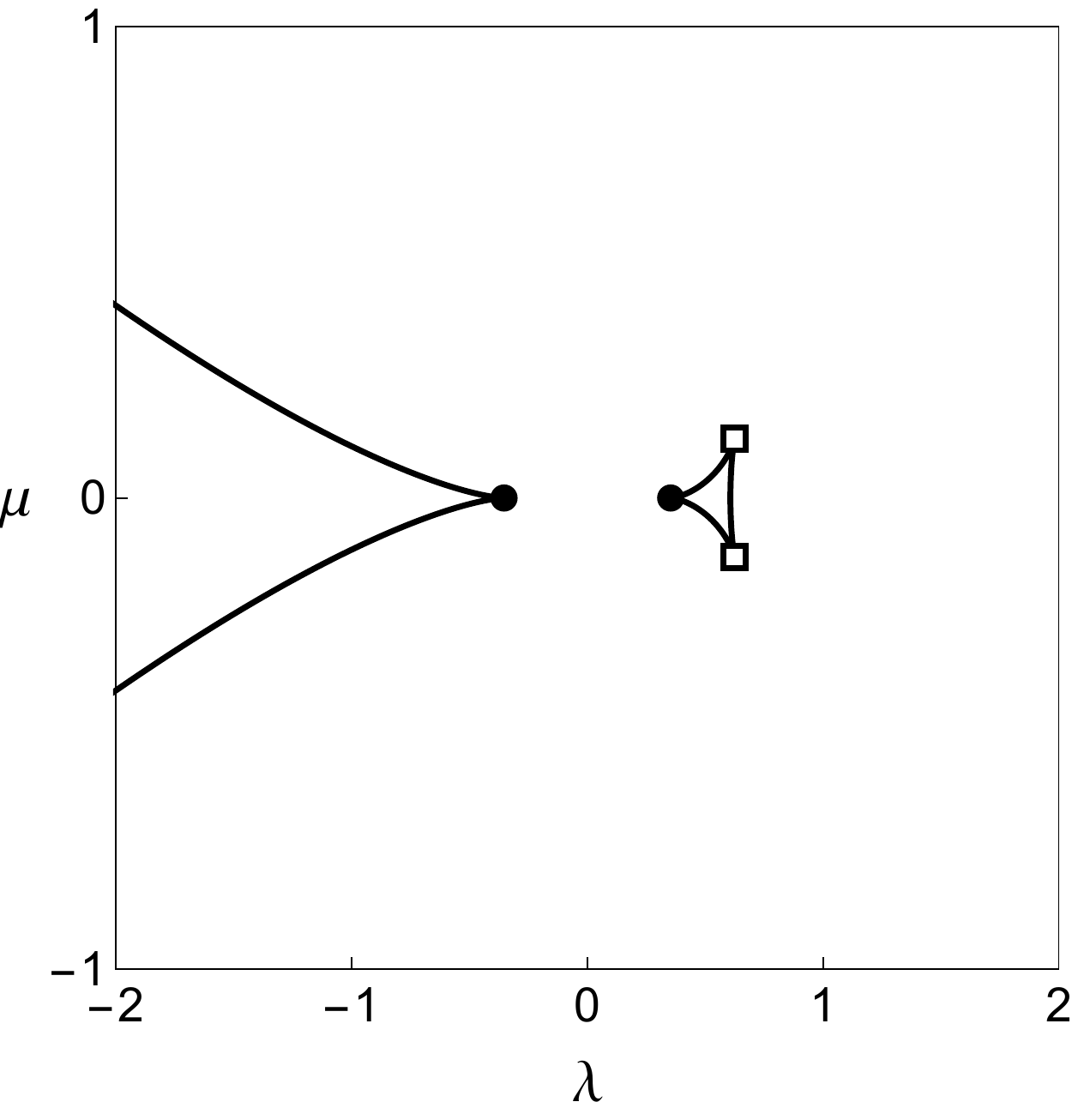}
  \hfill
  \includegraphics[width=0.32\textwidth]{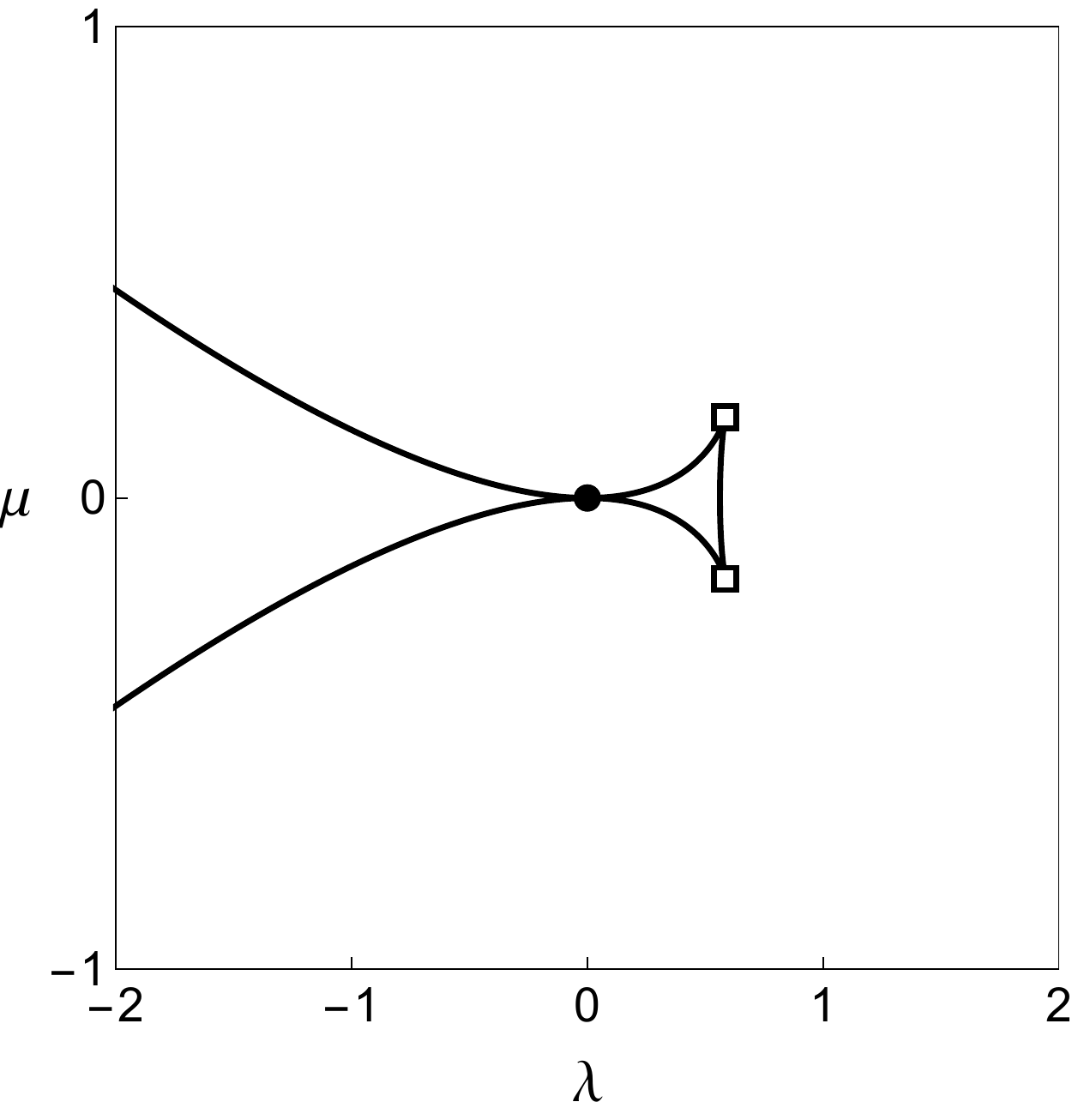}
  \\
  \includegraphics[width=0.32\textwidth]{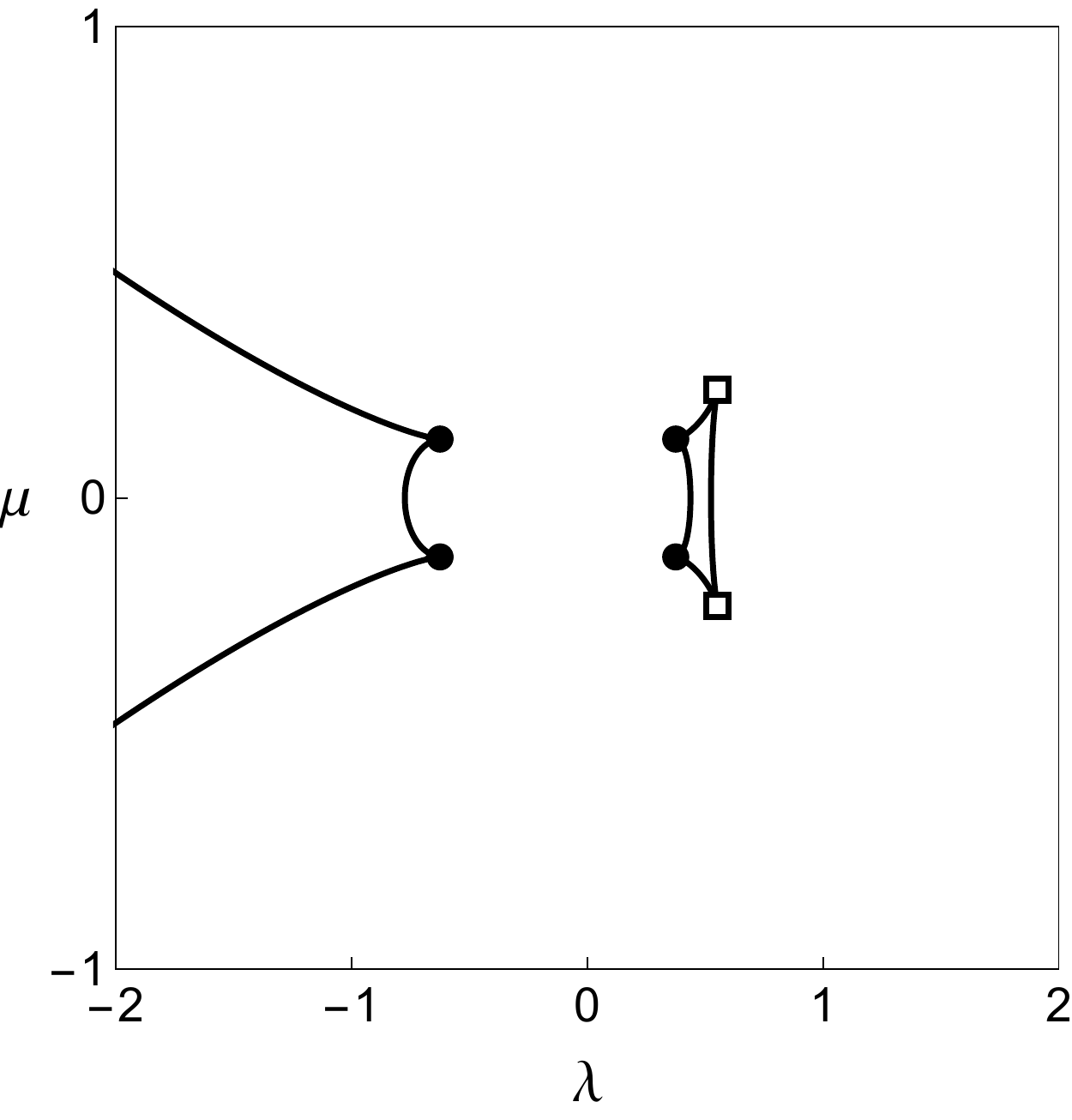}
  \hfill
  \includegraphics[width=0.32\textwidth]{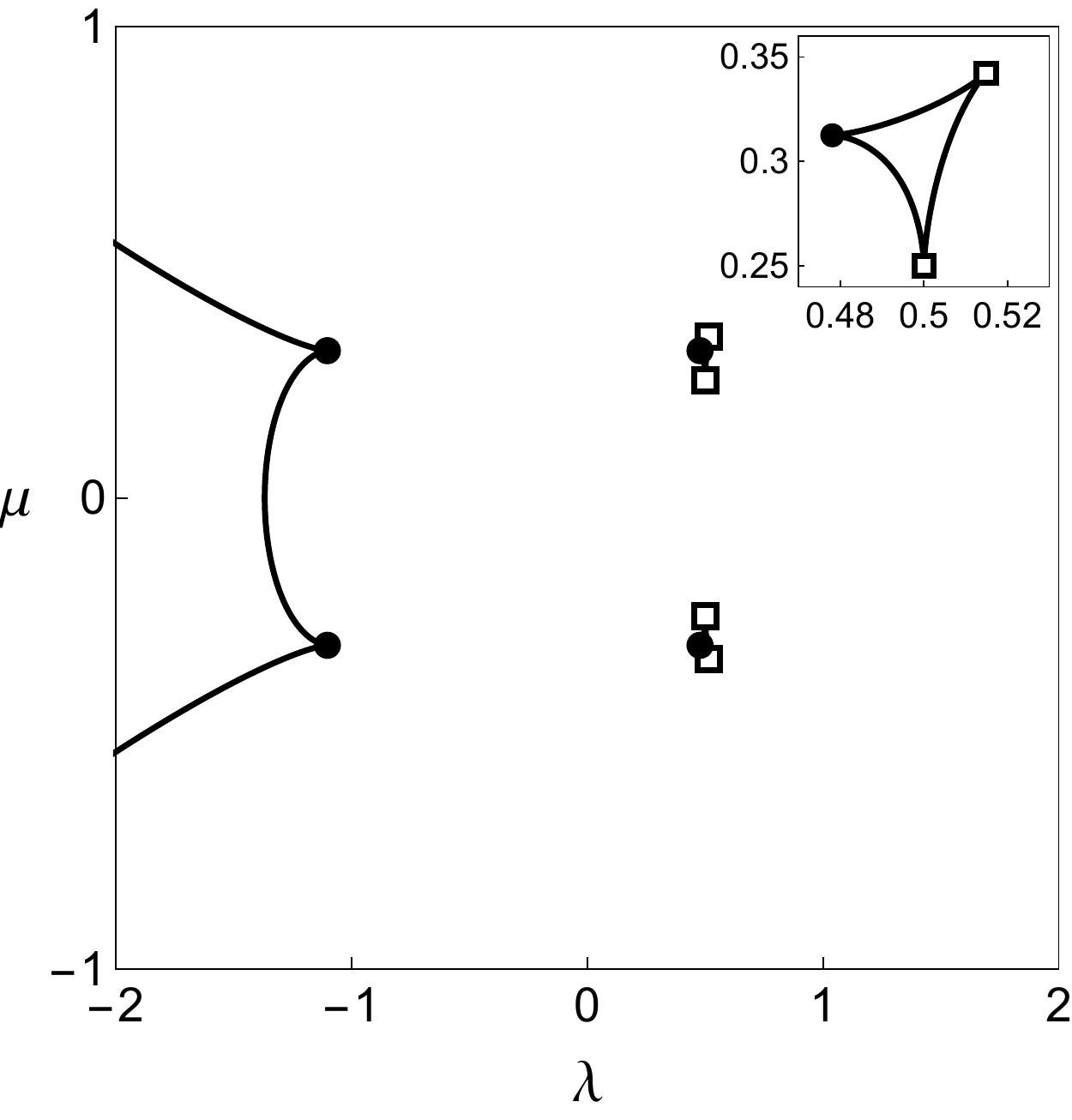}
  \hfill
  \includegraphics[width=0.32\textwidth]{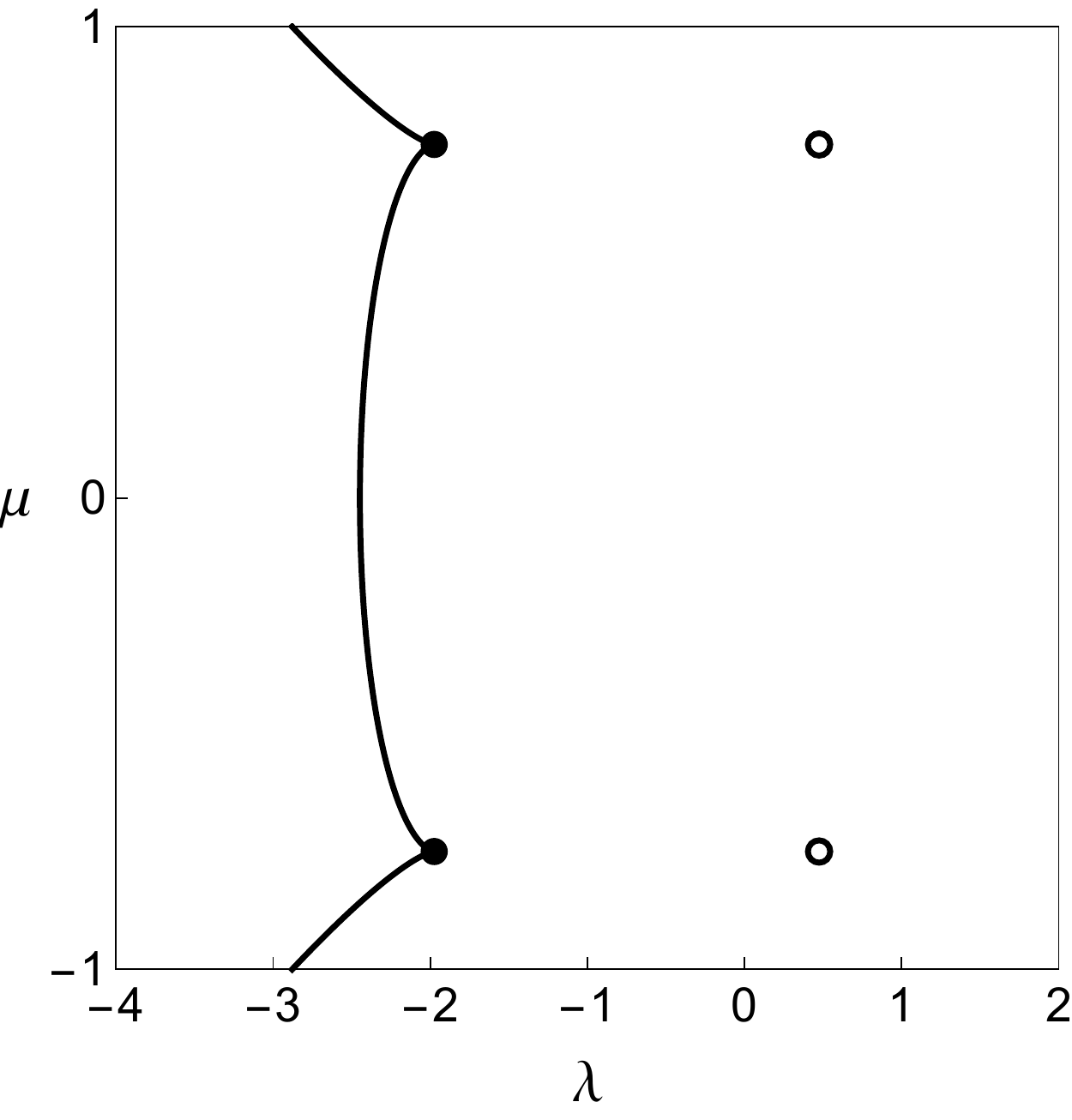}
  \caption{
    Bifurcation curves for $\kappa = 1$ and fixed values of~$\ell$
    (horizontal slices of the three-dimensional bifurcation diagram
    in \fref{fig:3dpictureofbifurcationset}).
    From top left:
    $\ell = -5/4$, $\ell = -1/8$, $\ell = 0$,
    $\ell = 1/8$, $\ell = 5/16$, and $\ell = 3/4$.
    The~$\bullet$ mark subcritical Hamiltonian Hopf bifurcations (for
    $\ell = 0$ the central equilibrium in $1{:}1{:}{-}2$~resonance),
    the~$\circ$ mark supercritical Hamiltonian Hopf bifurcations,
    and the~{\scriptsize $\square$} mark cusp bifurcations.
    In the panel for $\ell=5/16$ one of the smaller structures in
    the bifurcation diagram has been magnified in the inset.
  \label{fig:bif01} }
\end{figure}

\noindent
The proof of proposition~\ref{description of bifurcation set} is given
in appendix~\ref{proof of bifurcation set}, except for the degenerate
Hamiltonian Hopf bifurcation which we treat in \sref{degenerateHHb}
below.
The bifurcation set presented in table~\ref{tbl:bifurcations} includes
the centre-saddle and cusp bifurcations discussed
in~\sref{Bifurcationsofregularequilibria} and the Hamiltonian Hopf
bifurcations discussed in~\sref{HamiltonianHopfbifurcations}.
The bifurcation set in three-dimensional space
$(\lambda,\mu,\ell)$ is shown for $\kappa = 1$ in
figure~\ref{fig:3dpictureofbifurcationset}.
Successive horizontal sections of constant $\ell$ are shown
in figure~\ref{fig:bif01}.
Recall that for $\kappa \ne 0$ we can scale invariants and parameters to
obtain a system with $\kappa = 1$ and therefore the bifurcation diagram for
any $\kappa \ne 0$ can be obtained by inverting the scaling.

\subsection{The degenerate Hamiltonian Hopf bifurcation}
\label{degenerateHHb}

\noindent
Abbreviating
\begin{subequations}
\label{invariants}
\begin{align}
   T & \;\; = \;\; x_1 y_2 - x_2 y_1 \label{invariantsT}\\
   U & \;\; = \;\; \frac{x_1^2 + x_2^2}{2} \label{invariantsU}\\
   V & \;\; = \;\; \frac{y_1^2 + y_2^2}{2} \label{invariantsV}\\
   W & \;\; = \;\; x_1 y_1 + x_2 y_2 \label{invariantsW}
\end{align}
\end{subequations}
for a Hamiltonian system in two degrees of freedom, the standard
form of the (non-degenerate) Hamiltonian Hopf bifurcation reads as
\begin{equation}
\label{nondegHHb}
   H_{\nu} \;\; = \;\; T \; + \; U \; + \; \nu V \; + \; a V^2
   \enspace .
\end{equation}
The non-degeneracy condition is $a \neq 0$ and the sign of~$a$
distinguishes between the supercritical case $a > 0$ and the
subcritical case $a < 0$~; see~\cite{jcvdm85} for a proof.
In the degenerate case $a = 0$ terms of order higher than four
(in the original variables $x, y$) become important,
see~\cite{bri90, bri91, jcvdm96}.
Indeed, if the coefficient~$b$ of~$V^3$ is non-zero, then a
$C^{\infty}$--versal unfolding is given by
\begin{equation}
\label{degHHb}
   H_{\nu} \;\; = \;\; T \; + \; U \; + \; \nu_1 V \; + \;
   \frac{\nu_2}{2} V^2 \; + \; \nu_3 T V \; + \; b V^3
   \enspace .
\end{equation}
As expected, the coefficient of~$V^2$ has turned into the unfolding
parameter~$\nu_2$, but furthermore a third unfolding parameter~$\nu_3$
has emerged, the coefficient of the fourth order term~$TV$.
This $C^{\infty}$--modal parameter can be removed by passing to a
$C^0$--versal unfolding, again see~\cite{bri90, bri91, jcvdm96},
subject to $\nu_3 \notin \{ 0, \pm \sqrt{-b}, \pm \sqrt{3b} \}$.

For a general Hamiltonian with an equilibrium $0 \in \R^4$ in
$1{:}{-}1$~resonance, the standard forms \eqref{nondegHHb}
and~\eqref{degHHb} have to be achieved through normalization with
respect to~$T$ (and~$U$).
In our application to~\eqref{symmetricdetunedhamiltonian} the
Hamiltonian already is symmetric with respect to~$N$, which
amounts here to symmetry with respect to~$T$.
This alows to reduce to one degree of freedom and all other questions
can be answered using the reduced system;
see~\cite{HM02, HM05, hh07} for criteria concerning the
supercritical and subcritical Hamiltonian Hopf bifurcations.
To formulate similar criteria for the degenerate Hamiltonian Hopf
bifurcation we therefore reduce~\eqref{degHHb} to one degree of
freedom, using the invariants~\eqref{invariantsU}--\eqref{invariantsW}
of the $S^1$--action generated by~\eqref{invariantsT} as variables
subject to the syzygy
\begin{displaymath}
   2 U V \;\; = \;\; \frac{W^2 + T^2}{2}
\end{displaymath}
and the inequalities $U \geq 0$, $V \geq 0$.
Fixing $T = \theta$ this yields one sheet~$\cP_{\theta}$ of a
$2$--sheeted hyperboloid, a cone if $\theta = 0$, and the orbits
of the flow defined by~\eqref{degHHb} are given by the intersections
(within $\R^3 = \{ U, V, W \}$) with the parabolic cylinders
$\{ H_\nu = h \}$, i.e.\ the latter sets are flat in the $W$--direction.
Such parabolic cylinders can touch the reduced phase
space~$\cP_{\theta}$ only within the plane $\{ W = 0 \}$,
whence equilibria occur where the curves
\begin{subequations}
\label{withinW}
\begin{align}
   U & \;\; = \;\; 0  \label{phasespacewithinW}\\
   U & \;\; = \;\; h \; - \; \theta \; - \; \nu_1 V \; - \;
   \frac{\nu_2}{2} V^2 \; - \; \nu_3 T V \; - \; b V^3
   \label{energywithinW}
\end{align}
\end{subequations}
have the same derivative (adjusting the height~$h$
of~\eqref{energywithinW} appropriately).
Note that this formulation lends itself for a straightforward
generalization to $S^1$--symmetric Hamiltonian Hopf bifurcations
where none of the corresponding curves~\eqref{withinW} is expected
to be the co-ordinate axis, again compare
with~\cite{HM02, HM05, hh07}.

The point $V = 0$ corresponds to the singular tip
$(U, V, W) = (0, 0, 0)$ of the cone, so it is always an equilibrium.
It is where the Hamiltonian Hopf bifurcation takes place that
\eqref{phasespacewithinW} and~\eqref{energywithinW} have the same
derivative in $V = 0$, i.e.\ where the corresponding equilibrium
in~$\R^4$ has normal frequencies in $1{:}{-}1$~resonance.
This means that the difference function between \eqref{energywithinW}
and~\eqref{phasespacewithinW} --- which for the standard
form~\eqref{degHHb} is simply~\eqref{energywithinW} --- has a double
root at the singular point of the reduced phase space.
Correspondingly, the first derivative of the difference function at
$V = 0$ yields the first unfolding parameter~$\nu_1$.
The second derivative then yields the second unfoding
parameter~$\nu_2$, vanishing as well at the degenerate Hamiltonian
Hopf bifurcation and otherwise distinguishing between the
supercritical case (where the energy level set touches the cone from
the outside) and the subcritical case (where $\{ H_\nu = 0 \}$ touches
$\cP_0$ from the inside).
Note that the first derivative in fact yields $\nu_1 + \nu_3 \theta$
from which we not only obtain the unfolding parameter~$\nu_1$ by
taking $\theta = 0$, but also the modal parameter~$\nu_3$ by taking
the derivative with respect to~$\theta$.
Finally, the third derivative equals~$b$ and thus should be non-zero
to have a `non-degenerate' degenerate Hamiltonian Hopf bifurcation.

To check whether (and where) a degenerate Hamiltonian Hopf bifurcation
takes place in the normal form~\eqref{symmetricdetunedhamiltonian}
of the $1{:}1{:}{-}2$~resonance we merely have to take derivatives
of (the square root of) the curve~\eqref{phasespacesection11-2} and of
the curve~\eqref{parabola11-2} at $R = R_{\min} = \ell = |\mu| > 0$
and at $R = R_{\min} = \mu = 0$, $\ell < 0$.
As already noted in~\eqref{parabolaHHb}, the value $h$
of~$\cH_{\lambda}$ has to be adjusted to
$h^* = \lambda \ell + \frac{1}{2} \kappa \ell^2$ or $h^* = 0$,
respectively.
Let us first concentrate on $R_{\min} = 0$.
Then the cubic curve~\eqref{phasespacesection11-2} becomes
\begin{equation}
\label{ppsmuzero}
   X_2(R) \;\; = \;\; \pm R \sqrt{R - \ell}
\end{equation}
with derivatives $X_2^{\prime}(0) = \pm \sqrt{-\ell}$,
$X_2^{\prime \prime}(0) = \pm \sqrt{-\ell}^{-1}$ and
$X_2^{\prime \prime \prime}(0) = \mp \frac{3}{4} \sqrt{-\ell}^{-3}$,
while~\eqref{parabola11-2} has derivatives
$X_1^{\prime}(0) = - \lambda$, $X_1^{\prime \prime}(0) = - \kappa$
and $X_1^{\prime \prime \prime}(0) = 0$.
Equating the first derivatives yields
\begin{equation}
\label{firstderivativemuzero}
   - \lambda \;\; = \;\; \pm \sqrt{-\ell}
   \enspace ,
\end{equation}
whence the parabola~\eqref{parabola11-2} touches the
cubic~\eqref{phasespacesection11-2} at the `upper' side for
$\lambda < 0$ and at the `lower' side for $\lambda > 0$.
Equating the second derivatives yields
\begin{equation}
\label{secondderivativemuzero}
   - \kappa \;\; = \;\; \frac{\pm 1}{\sqrt{-\ell}}
   \enspace ,
\end{equation}
whence the Hamiltonian Hopf bifurcation is degenerate for
\begin{displaymath}
   (\lambda, \mu, \ell) \;\; = \;\;
   (\frac{1}{\kappa}, 0, \frac{-1}{\kappa^2})
   \enspace .
\end{displaymath}
For definiteness we restrict to a positive constant $\kappa > 0$, so
the degenerate Hamiltonian Hopf bifurcation takes place at the `lower'
side of the cubic~\eqref{phasespacesection11-2}.
We infer from~\eqref{firstderivativemuzero} that
$\frac{1}{\kappa} - \lambda$ can be used to unfold the Krein collision
and from~\eqref{linearparameterdependence} that in fact the detuning
$\frac{1}{\kappa} - \delta$ plays the r\^ole of the unfolding
parameter~$\nu_1$, while $-\lambda_1$ plays the r\^ole of the modal
parameter~$\nu_3$.
From~\eqref{secondderivativemuzero} we conclude that the second
derivative at $R = R_{\min} = 0$ of the difference function between
\eqref{ppsmuzero} and~\eqref{parabola11-2} reads as
\begin{displaymath}
   \frac{1}{\sqrt{-\ell}} \; - \; \kappa \;\; = \;\;
   \frac{\kappa^3}{2} (\ell + \frac{1}{\kappa^2}) \; + \;
   \frac{3 \kappa^5}{8} (\ell + \frac{1}{\kappa^2})^2 \;\; + \;\; \cdots
\end{displaymath}
whence the r\^ole of~$\nu_2$ is played by $\frac{1}{\kappa^2} + \ell$.
The third derivative $\frac{3}{4} \kappa^3$ is positive for
$\kappa > 0$ and in particular non-zero.
Note that the latter also follows from
$F^{(4)} \equiv 6 \kappa^2 \neq 0$,
see equation \eqref{F(R)}.

We now check the conditions at the other two possible values
of~$R_{\min}$.
For both(!) $R_{\min} = \ell = \pm \mu > 0$ the
cubic~\eqref{phasespacesection11-2} becomes
\begin{equation}
\label{ppsellmu}
   X_2(R) \;\; = \;\; \pm (R - \ell) \sqrt{R + \ell}
\end{equation}
with derivatives $X_2^{\prime}(\ell) = \pm \sqrt{2 \ell}$,
$X_2^{\prime \prime}(\ell) = \pm \sqrt{2 \ell}^{-1}$ and
$X_2^{\prime \prime \prime}(\ell) =
 \mp \frac{3}{4} \sqrt{2 \ell}^{-3}$,
while
$X_1^{\prime}(\ell) = - \lambda - \kappa \ell$,
$X_1^{\prime \prime}(\ell) = - \kappa$
and $X_1^{\prime \prime \prime}(\ell) = 0$.
Equating the first derivatives yields
\begin{equation}
\label{firstderivativeellmu}
   - \lambda \; - \; \kappa \ell \;\; = \;\; \pm \sqrt{2 \ell}
   \enspace ,
\end{equation}
whence the parabola~\eqref{parabola11-2} touches the
cubic~\eqref{phasespacesection11-2} at the `upper' side for
$\lambda < - \kappa \ell$ and at the `lower' side for
$\lambda > - \kappa \ell$.
Equating the second derivatives yields
\begin{equation}
\label{secondderivativeellmu}
   - \kappa \;\; = \;\; \frac{\pm 1}{\sqrt{2 \ell}}
   \enspace ,
\end{equation}
whence the Hamiltonian Hopf bifurcation is degenerate for
\begin{displaymath}
   (\lambda, \mu, \ell) \;\; = \;\;
   (\frac{1}{2 \kappa}, \frac{\pm 1}{2 \kappa^2}, \frac{1}{2 \kappa^2})
   \enspace .
\end{displaymath}
Keeping $\kappa > 0$, the degenerate Hamiltonian Hopf bifurcation
again takes place at the `lower' side of the
cubic~\eqref{phasespacesection11-2}.
We infer from~\eqref{firstderivativeellmu} that
$\frac{1}{2 \kappa} - \lambda$ can be used to unfold the Krein
collision where (again) the detuning $\frac{1}{2 \kappa} - \delta$
plays the r\^ole of the unfolding parameter~$\nu_1$, while
$-\lambda_1$ plays the r\^ole of the modal parameter~$\nu_3$.
From~\eqref{secondderivativeellmu} we conclude that the second
derivative at $R = R_{\min} = 0$ of the difference function between
\eqref{ppsellmu} and~\eqref{parabola11-2} reads as
\begin{displaymath}
   \frac{1}{\sqrt{2 \ell}} \; - \; \kappa \;\; = \;\;
   - \frac{\kappa^3}{2} (\ell - \frac{1}{2 \kappa^2}) \; + \;
   \frac{3 \kappa^5}{8} (\ell - \frac{1}{2 \kappa^2})^2 \;\; + \;\; \cdots
\end{displaymath}
whence the r\^ole of~$\nu_2$ can be played by any linear combination
of $\frac{1}{2 \kappa^2} - \ell$ and $\frac{1}{2 \kappa^2} \mp \mu$.
The third derivative $\frac{3}{4} \kappa^3$ is non-zero; this also
follows from $F^{(4)} \equiv 6 \kappa^2 \neq 0$.

Note that for all three degenerate Hamiltonian Hopf bifurcations the
Hamiltonian~\eqref{symmetricdetunedhamiltonian} does provide a full
$C^{\infty}$--versal unfolding, with modal parameter
$\nu_3 = -\lambda_1$.
However, putting $\lambda_1 = 0$ results in topological
co-dimension~$3$ instead of~$2$, see again~\cite{jcvdm96}.

\subsection{Bifurcation diagram for $\kappa = 0$}

\noindent
We briefly discuss here the case $\kappa = 0$.
Note that this case is degenerate:
higher order terms in $H_{N,L}$ may change the results obtained here.
Moreover, fibres may contain non-compact connected components.

\begin{table}[htbp]
  \centering
  \begin{tabularx}{1.0\linewidth}{lll}
    \toprule
    Family & Parametrisation $(\lambda,\mu,\ell)$ & Parameter ranges \\
    \midrule
    \multicolumn{3}{l}{\textit{Centre-Saddle}} \\
    \midrule
    $\CS{1}^{\kappa=0}$ &  $(\lambda,-\mu_-,3a-\lambda^2)$ & $\lambda \in \mathbb R_*$, $0 < a < \lambda^2/2$ \\ 
    $\CS{2}^{\kappa=0}$ &  $(\lambda,\mu_-,3a-\lambda^2)$ & $\lambda \in \mathbb R_*$, $0 < a < \lambda^2/2$ \\ 
    $\CS{3}^{\kappa=0}$ &  $(\lambda,\pm\mu_+,3a-\lambda^2)$ & $\lambda \in \mathbb R_*$, $4\lambda^2/9 < a < \lambda^2/2$ \\ 
    \midrule
    \multicolumn{3}{l}{\textit{Hamiltonian Hopf (subcritical)}} \\
    \midrule
    $(\HHsub{1})^{\kappa=0}$ & $(\lambda, \tfrac12 \lambda^2, \tfrac12 \lambda^2)$ & $\lambda \in \mathbb R_*$ \\
    $(\HHsub{2})^{\kappa=0}$ & $(\lambda, -\tfrac12 \lambda^2, \tfrac12 \lambda^2)$ & $\lambda \in \mathbb R_*$ \\
    $(\HHsub{3})^{\kappa=0}$ & $(\lambda, 0, -\lambda^2)$ & $\lambda \in \mathbb R_*$ \\
    \bottomrule
  \end{tabularx}
  \caption{
    Bifurcation families for $\kappa=0$.
    Here, the quantities $\mu_\pm$ are given by
    $\mu_\pm^2 = -3 a^2+6 a \lambda ^2
     - 2 \lambda^4 \mp 2 |\lambda| (\lambda^2-2a)^{3/2}$
    and can be obtained from equation~\eqref{general mu and ell}
    by setting $\kappa=0$.
  \label{tbl:bif-kappa-0} }
\end{table}

\begin{figure}[htbp]
  \centering
  \includegraphics[width=8cm]{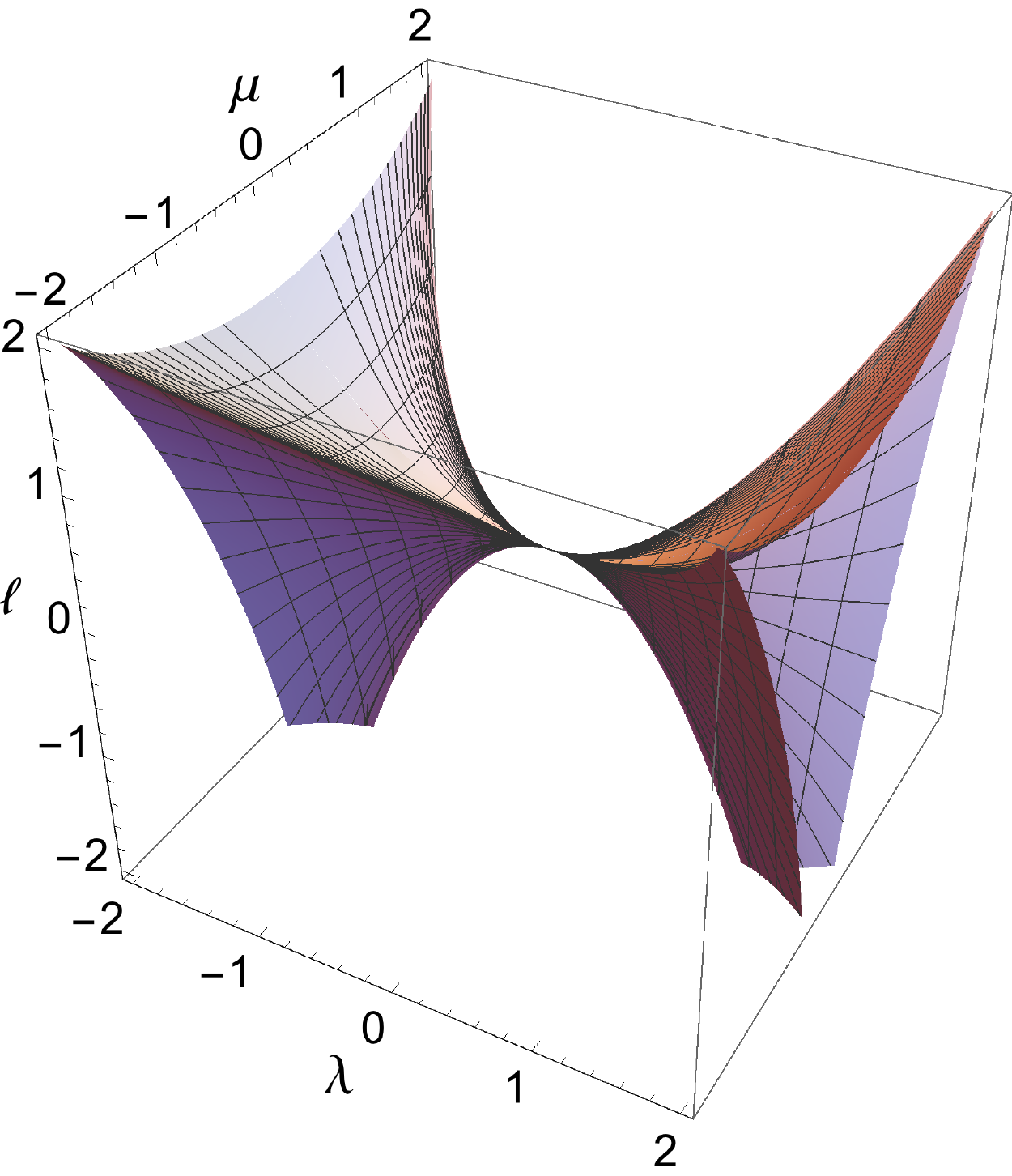}
  \caption{
    Bifurcation diagram for $\kappa=0$ in $(\lambda,\mu,\ell)$ space.
    One may think of this as obtained from
    \fref{fig:3dpictureofbifurcationset} by sending the triangle of
    cusp and degenerate Hamiltonian Hopf bifurcations from
    $\lambda = \frac{1}{2}$ to $\lambda = \infty$ (and the
    supercritical Hamiltonian Hopf bifurcations beyond).
  \label{fig:bif-kappa-0-3d} }
\end{figure}

\begin{figure}[htbp]
  \includegraphics[width=0.32\textwidth]{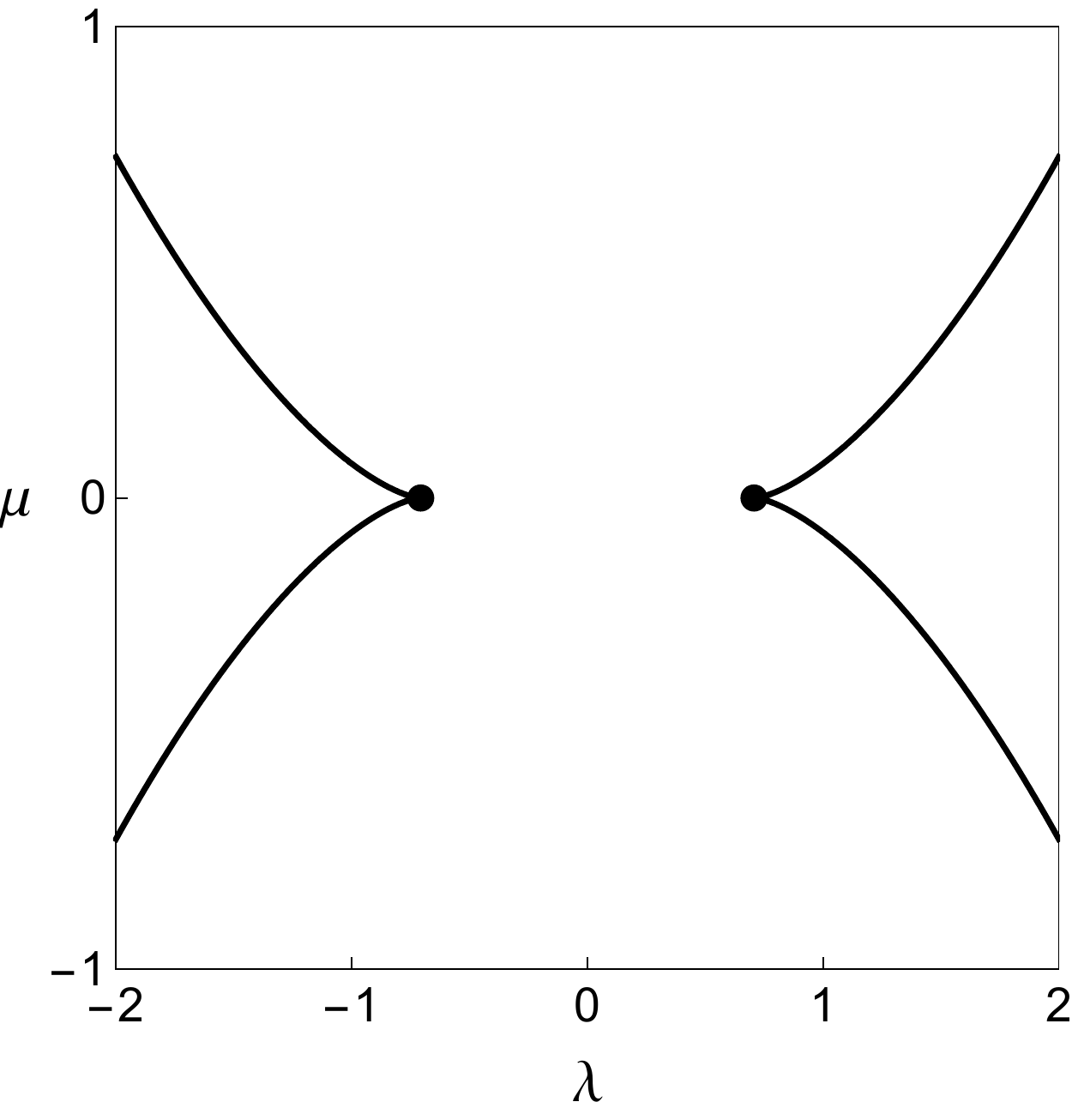}
  \hfill
  \includegraphics[width=0.32\textwidth]{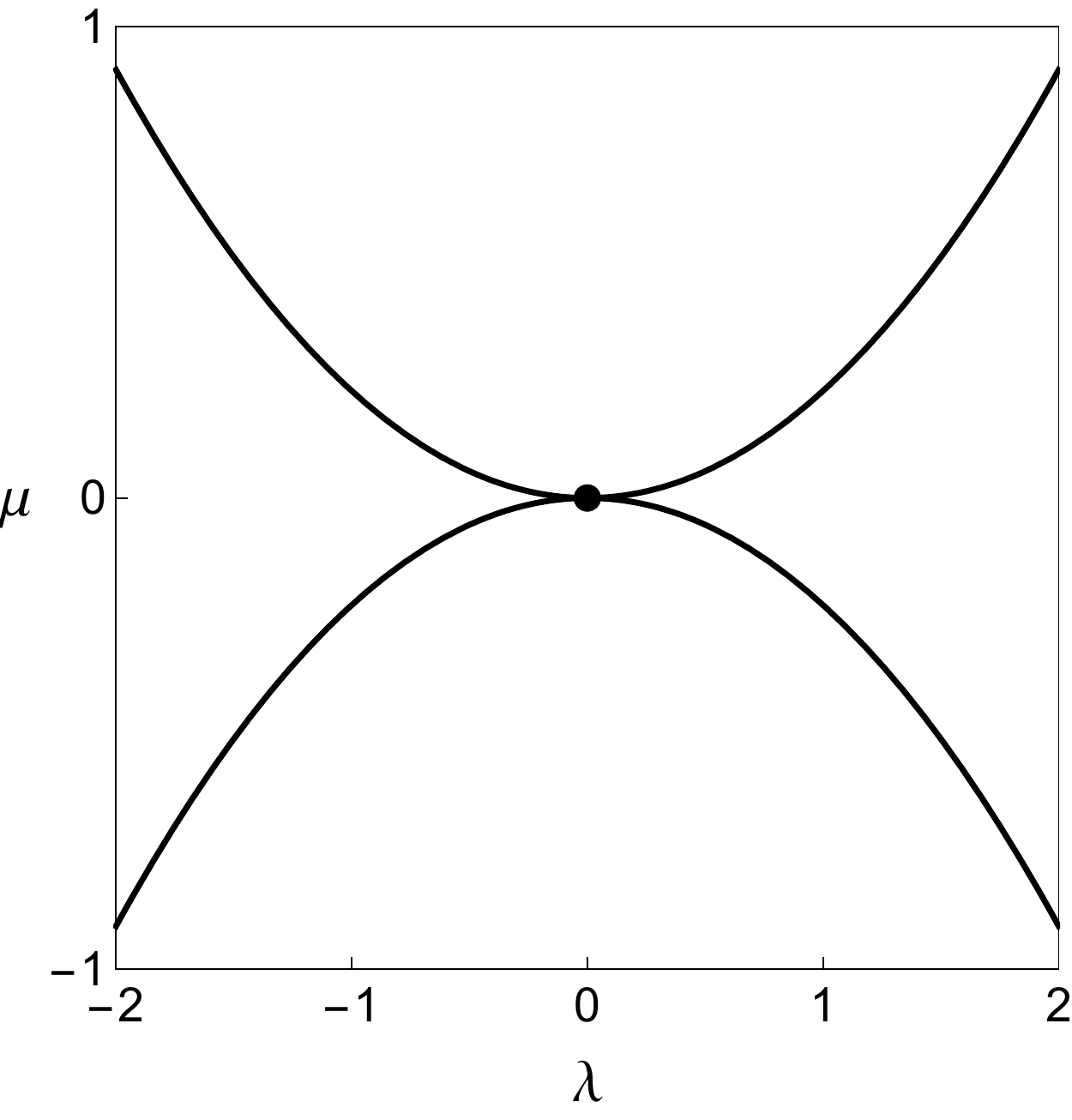}
  \hfill
  \includegraphics[width=0.32\textwidth]{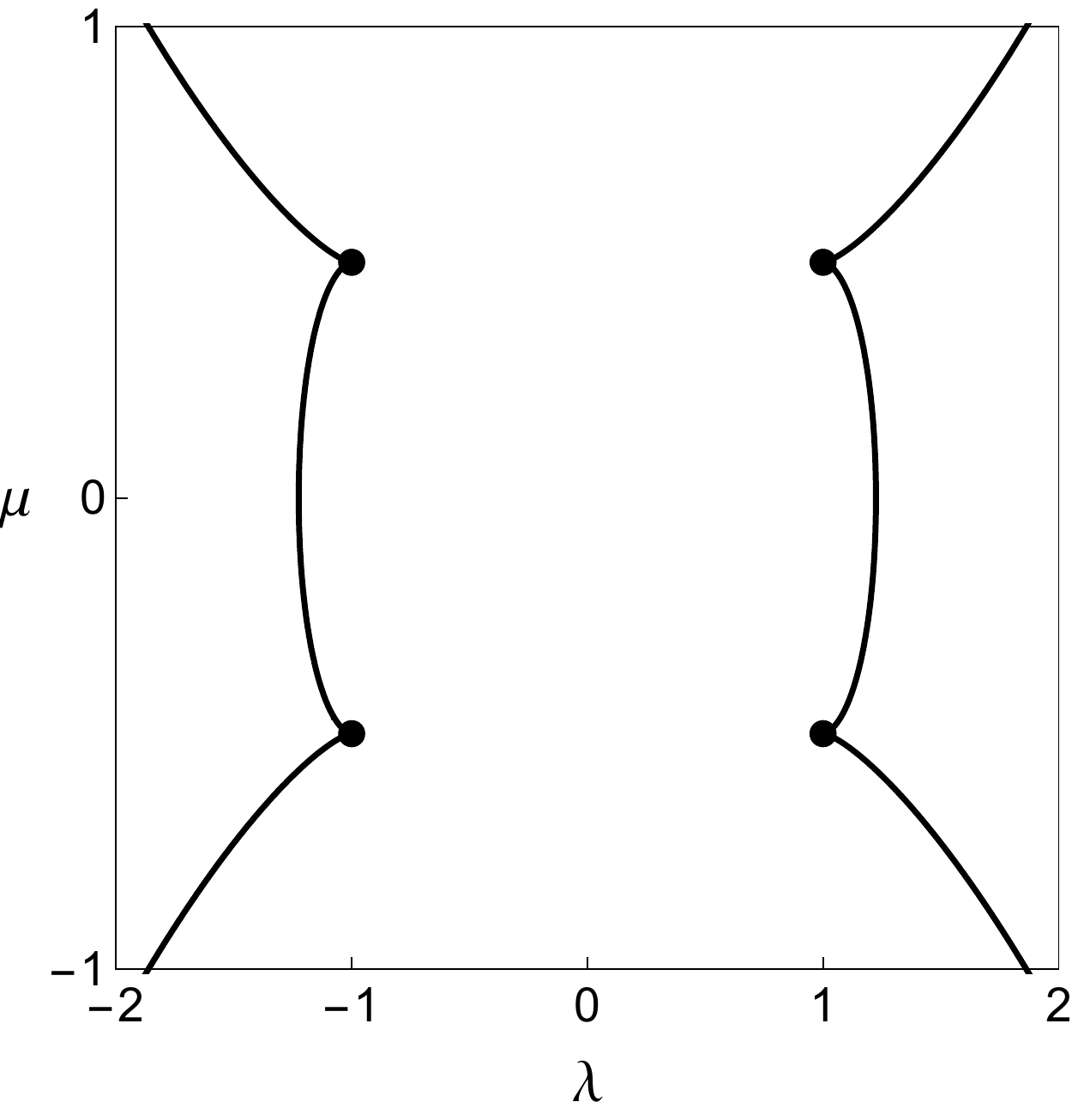}
  \caption{
    Bifurcation curves for $\kappa=0$ and fixed values
    of $\ell$ in $(\lambda,\mu)$--space.
    From left to right: $\ell = -\frac{1}{2}$, $\ell = 0$ and
    $\ell = \frac{1}{2}$.
    The~$\bullet$ mark subcritical Hamiltonian Hopf bifurcations
    (except for the one at $(\lambda, \mu, \ell) = (0, 0, 0)$ 
    corresponding to the central equilibrium in
    $1{:}1{:}{-}2$~resonance).
  \label{fig:bif0} }
\end{figure}

An analysis along the lines of \sref{bifurcation diagram kappa != 0}
shows that there are no cusp bifurcations or supercritical
Hamiltonian Hopf bifurcations.
We find three families of centre-saddle bifurcations
$\CS{k}^{\kappa=0}$, $k=1,2,3$, whose pairwise common boundaries
correspond to three families of subcritical Hamiltonian Hopf
bifurcations $(\HHsub{k})^{\kappa=0}$, $k=1,2,3$.
Their parametrisations are given in table~\ref{tbl:bif-kappa-0} and
they are depicted in figures \ref{fig:bif-kappa-0-3d}
and~\ref{fig:bif0}.
Note that the bifurcation diagram for $\kappa = 0$ can be obtained
from the one for $\kappa \ne 0$ by considering the limit of each
bifurcation family for $\kappa \to 0$ and ignoring the families that
exist only for $\lambda > \Frac{1}{2\kappa}$.

\section{Critical values of the energy-momentum mapping}
\label{Critical values}

\noindent
We now focus on the set of critical values~$\CV$ of the
\emph{energy-momentum mapping}
\begin{displaymath}
   \EM  \; : \;\; \R^6 \;\; \longrightarrow \;\; \R^3
\end{displaymath}
defined in~\eqref{energymomentummapping}, i.e.\ with components
$N$, $L$ and~$H$.
Note that the diffeomorphism
\begin{displaymath}
   (\mu, \ell, h) \;\; \mapsto \;\; (\mu, \frac{\ell + \mu}{2}, h)
\end{displaymath}
of~$\R^3$ maps $\CV$ to the set of critical values of
\begin{displaymath}
   (N, J, H)  \; : \;\; \R^6 \;\; \longrightarrow \;\; \R^3
   \enspace ;
\end{displaymath}
in fact the whole ramified torus bundle defined
by~\eqref{energymomentummapping} turns into the ramified torus
bundle defined by $(N, J, H)$.
It is only the way that the toral fibres are orbits of
\eqref{T2-action-1} and~\eqref{T2actionPhi} that is affected
by the global isotropy
$\Z_2 = \{ (0, 0), (\frac{1}{2}, \frac{1}{2}) \}$ --- the
$\T^2$--action~\eqref{T2-action-1} runs through regular fibres
twice.

The parameter $\lambda$ depends on the values $N = \mu$ and
$L = \ell$ through~\eqref{linearparameterdependence}, which
defines a diffeomorphism
\begin{displaymath}
   (\delta, \mu, \ell) \;\; \mapsto \;\; (\lambda, \mu, \ell)
\end{displaymath}
of~$\R^3$ that relates the bifurcation diagram detailed in
proposition~\ref{description of bifurcation set} to the
bifurcation diagram in terms of $(\delta, \mu, \ell)$.
While the latter is equal to the former when
$\lambda_1 = \lambda_2 = 0$ --- the situation we concentrate
on in section~\ref{sets of critical values} below --- it is
instructive to compare the intersections $\lambda = \delta$
in figures \ref{fig:cvlambda=0} and~\ref{cv lambda<0} below
with \fref{fig:introductorybifurcationdiagrams} where the
detuning~$\delta$ is fixed, but
$(\lambda_1, \lambda_2) \neq (0, 0)$
whence the vertical planes $\lambda = \delta$ get `tilted'
to the planes
$\lambda = \delta + \lambda_1 \mu + \lambda_2 \ell$
with constant~$\delta$.

Similarly, we can replace the normal form $H = H^{\delta}_{N, L}$
defined in~\eqref{symmetricdetunedhamiltonian} by the (simplified)
reduced Hamiltonian function $\cH_{\lambda}$
of~\eqref{reducedhamiltonian} expressed in co-ordinates $q_i, p_i$,
$i=1,2,3$ as the diffeomorphism
\begin{displaymath}
   \left( \mu, \ell, h \right) \;\; \mapsto \;\;
   \left( \mu, \ell, {\textstyle h - \alpha \ell - \beta \mu -
   \frac{1}{2} \gamma_1 \mu^2 - \gamma_2 \mu \ell -
   \frac{1}{2} \gamma_3 \ell^2} \right)
\end{displaymath}
of~$\R^3$ maps critical values to critical values.
In the sequel we therefore work with the energy-momentum mapping
\begin{equation}
\label{reducedenergymomentummapping}
   \EM  \;\; = \;\; (N, L, \cH_{\lambda}) \enspace ,
\end{equation}
the values of which we keep denoting by $(\mu, \ell, h) \in \R^3$.
We consider only the case $\kappa \ne 0$ and the
scaling~\eqref{kappa=1 scaling} allows us to restrict to
\begin{equation}
\label{finallydoscalekappa}
   \kappa \;\; = \;\; 1 \enspace ,
\end{equation}
the value which was also used for table~\ref{tbl:bifurcations} and
figures \ref{fig:3dpictureofbifurcationset} and~\ref{fig:bif01}.

\subsection{Amended bifurcation diagram}
\label{amended bd}

\noindent
The critical values of~\eqref{reducedenergymomentummapping} correspond
to values $(\mu, \ell, h)$ of the internal parameters and energy for
which the level set $\cH_{\lambda}^{-1}(h)$ of the reduced Hamiltonian
either has a tangency with the reduced space $P_{\mu \ell}$ or goes
through the singular point of~$\cP_{\mu \ell}$.
Recall from proposition~\ref{prop:singtype} that the reduced phase
space~$\cP_{\mu \ell}$ is singular at $(R_{\min}, 0, 0)$ for
$\ell = |\mu|$ or $\mu = 0$, $\ell \le 0$, where
$R_{\min} = \max(|\mu|, \ell)$.
From~\eqref{reducedhamiltonian} with $\kappa=1$ we find that the value
$h_c$ of the energy for which $\cH_{\lambda}^{-1}(h)$ goes through the
singular point is
\begin{displaymath}
   h_c \;\; = \;\; \lambda R_{\min} \; + \; \Frac{1}{2} R_{\min}^2
   \;\; = \;\;
   \begin{cases}
      \lambda \ell + \frac{1}{2} \ell^2 &
      \text{for $\mu = \pm \ell$, $\ell \ge 0$} \\
      0 & \text{for $\mu = 0$, $\ell \le 0$.}
   \end{cases}
\end{displaymath}
This gives three curves of critical values of $\EM$, each one
parametrised by~$\ell$.
Mirroring the notation for the normal modes in
table~\ref{tab:isotropy} we denote by $\CV_{23}$ the curve
corresponding to $\mu = \ell$, $\ell > 0$, parametrising the
normal $1$--mode, by $\CV_{13}$ the curve corresponding to
$\mu = -\ell$, $\ell > 0$, parametrising the
normal $2$--mode and by $\CV_{12}$ the curve corresponding
to $\mu = 0$, $\ell < 0$, parametrising the
normal $3$--mode, noting that $\EM(C_{ij}) = \CV_{ij}$ for
$ij=12,13,23$.
Depending on the topology of
$\cH_{\lambda}^{-1}(h_c) \cap \cP_{\mu \ell}$
(parts of) these curves could be be attached to a surface of
critical values or be transversally isolated, in the sense that
a neighbourhood of a point of such a curve contains only
critical values from the same curve.
We denote by $\CV_{ij}^0$ the subset of the curve~$\CV_{ij}$
where it is transversally isolated and we refer to such subset
as \emph{thread}.
Moreover, we denote by $\CV_{ij}^+ \subseteq \CV_{ij}$
the subset where $h_c > h_{\min}$; here $h_{\min}$ is the minimal
value of $\cH_\lambda$ for given $(\mu,\ell)$.
The minimum $h_{\min}$ of~$\cH_{\lambda}$ on~$\cP_{\mu \ell}$
depends continuously on $\mu$ and~$\ell$ whence necessarily
$\CV_{ij}^0 \subseteq \CV_{ij}^+$.

The topology of $\cH_{\lambda}^{-1}(h_c) \cap \cP_{\mu \ell}$, and
subsequently of the fibre $\EM^{-1}(\mu,\ell,h_c)$ depends
on the slope of $\cH_{\lambda}^{-1}(h_c)$ relative to the
slope of $\cP_{\mu \ell}$ at the singular point.
Recall from the discussion in \sref{HamiltonianHopfbifurcations} that
for each value of $(\mu, \ell)$ such that the reduced space
$\cP_{\mu \ell}$ has a singular point, there is an interval of values
of~$\lambda$ such that the connected component of
$\cH_{\lambda}^{-1}(h_c) \cap \cP_{\mu \ell}$ that goes through the
singular point is a topological (non-smooth) circle consisting of the
dynamically unstable singular point and its stable=unstable manifold.
We note here that such critical values $(\mu, \ell, h_c)$ that have
$\cH_{\lambda}^{-1}(h_c) \cap \{ Y=0 \}$ in the `interior' of
$\cP_{\mu \ell}$, compare with \fref{fig:tip-3}(middle), lie on the
thread~$\CV_{ij}^0$ for the corresponding $ij \in \{ 12, 13, 23 \}$
and thus in the interior of the image of~$\EM$.

\begin{figure}
  \centering
  \includegraphics[width=6cm]{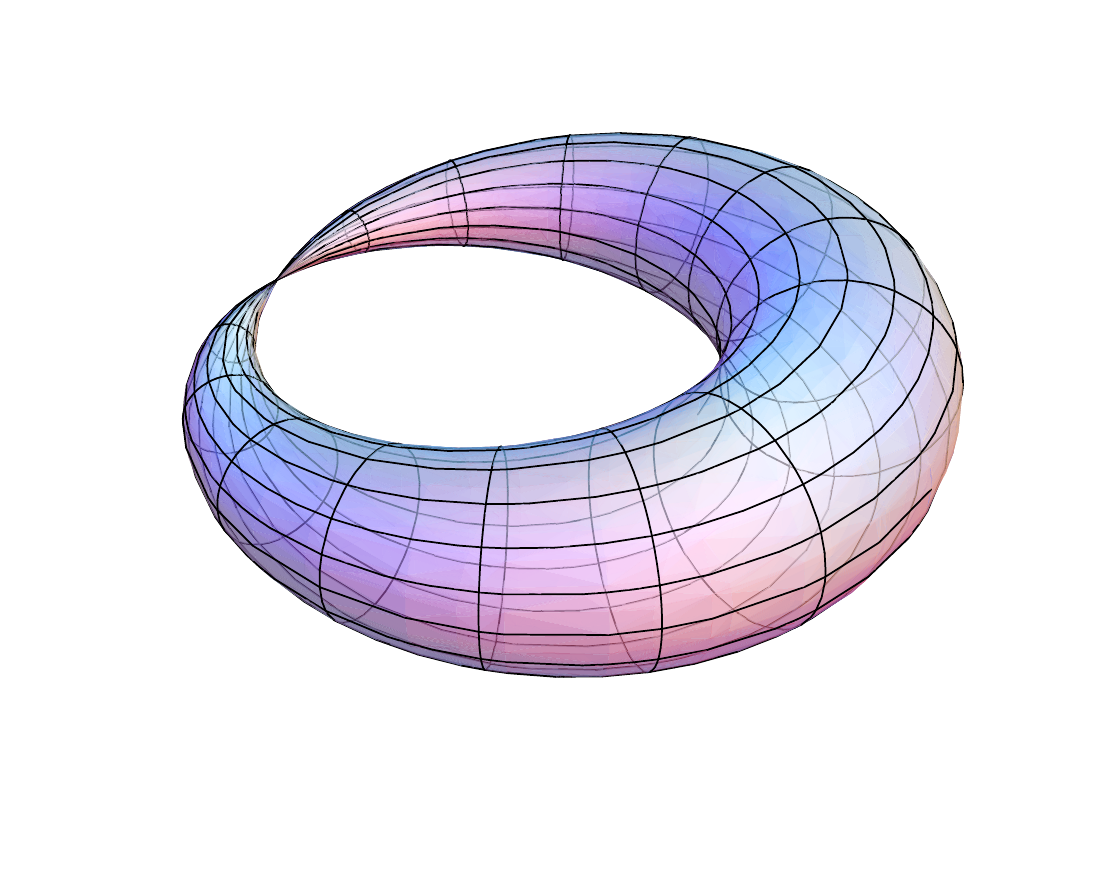}
  \caption{Two-dimensional pinched torus.
  \label{fig:pinchedtorus}}
\end{figure}

\begin{figure}
   \centering
   \includegraphics[width=6cm]{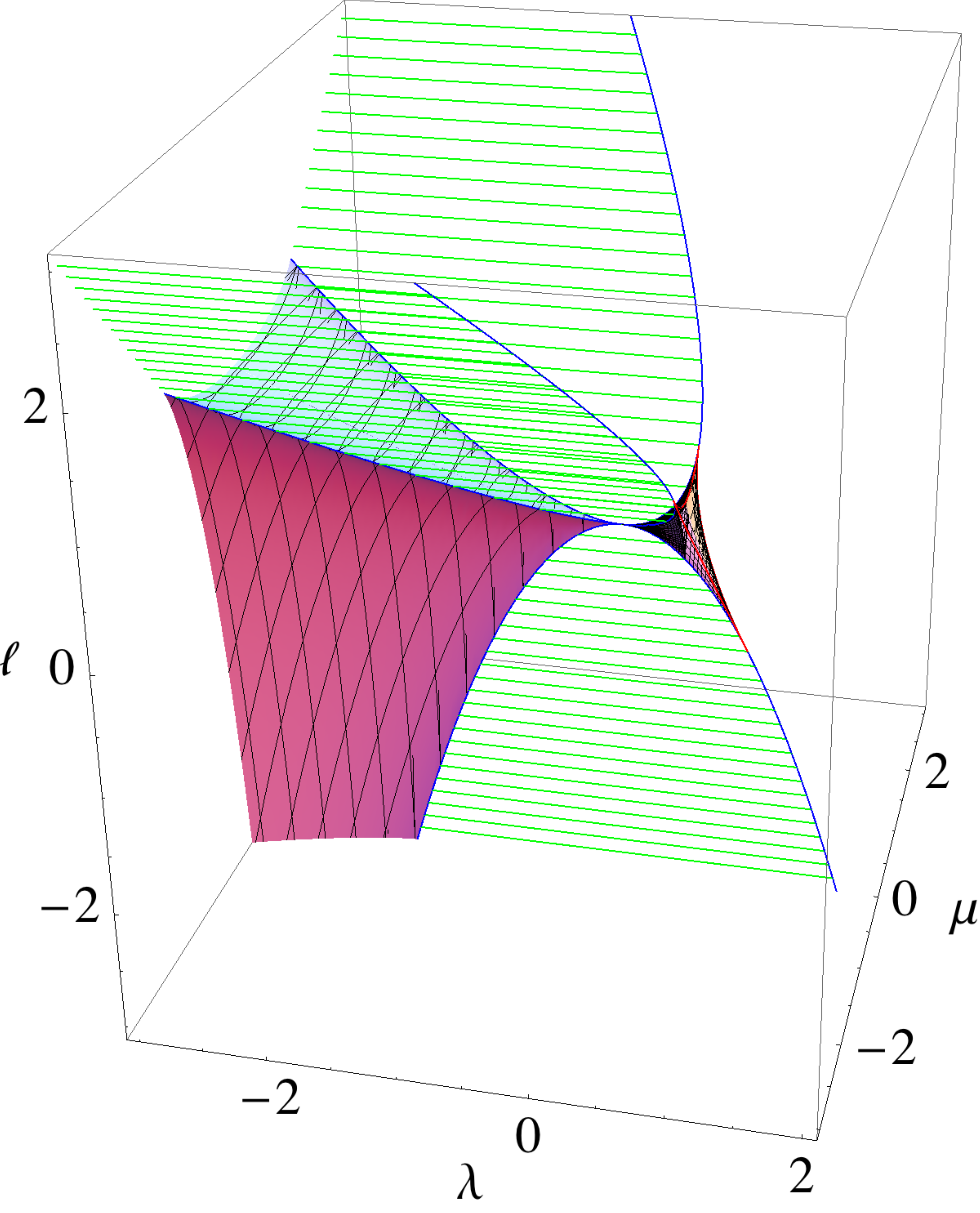}
   \caption{Amended bifurcation diagram.
     The horizontal lines represent intervals of values of
     $\lambda$ for which the fibre of $\cH_{\lambda}$ going through
     the singular point of $\cP_{\mu \ell}$ is a topological circle,
     i.e.\ the singular point is an unstable equilibrium.
   \label{fig:pictureofmu-ellcriticalvalues} }
\end{figure}

Reconstructing the $\T^2$--action~$\Phi$ over a topological
(non-smooth) circle $\cH^{-1}_{\lambda}(h_c) \cap \cP_{\mu \ell}$
gives that the resulting singular fibre is the Cartesian product
of a two-dimensional pinched torus, see
\fref{fig:pinchedtorus}, with a (smooth) torus~$\T^1$.
The latter is the normal mode that had been reduced to the singular
equilibrium on~$\cP_{\mu \ell}$ and the former constitutes its
stable=unstable manifold.
Specifically, the condition for
$\cH_{\lambda}^{-1}(h_c) \cap \cP_{\mu \ell}$
to be a topological circle is $|X_1'(R_{\min})| < |X_2'(R_{\min})|$.
Since
\begin{displaymath}
   F''(R_{\min}) \;\; = \;\; 2 \left(
   X_1'(R_{\min})^2 - X_2'(R_{\min})^2 \right)
\end{displaymath}
we get the equivalent condition $F''(R_{\min}) < 0$.
For fixed values $(\mu,\ell)$ such that $\cP_{\mu \ell}$ is singular
we consider the values of $\lambda$ such that
$\cH_{\lambda}^{-1}(h_c) \cap \cP_{\mu \ell}$ is a topological circle.
Evaluating for $h = h_c$ that
\begin{displaymath}
   F''(R_{\min}) \;\; = \;\; 2 \lambda^2 \; + \; 4 \lambda R_{\min}
   \; + \; 2 (\ell + R_{\min}^2 - 3 R_{\min}) \enspace ,
\end{displaymath}
we conclude that there is exactly one $\lambda$--interval where
$F''(R_{\min}) < 0$, lying between the two real roots of~$F''(R_{\min})$.
The endpoints of the $\lambda$--interval correspond to Hamiltonian
Hopf bifurcations and lemma~\ref{lemma bd char} allows us to
distinguish the supercritical from the subcritical ones.
In case $\mu = 0$, $\ell < 0$, the interval is
$-\sqrt{-\ell} \le \lambda \le \sqrt{-\ell}$.
For $-1 < \ell < 0$ both ends of the interval correspond to subcritical
Hamiltonian Hopf bifurcations.
For $\ell < -1$ the right end $\lambda = \sqrt{-\ell}$ corresponds to
a supercritical Hamiltonian Hopf bifurcation while the left end
$\lambda = -\sqrt{-\ell}$ corresponds to a subcritical Hamiltonian Hopf
bifurcation.
In case $\ell = |\mu| > 0$, the interval is
$-\sqrt{2\ell}-\ell \le \lambda \le \sqrt{2\ell}-\ell$.
For $0 < \ell < 1$ both ends of this interval correspond to subcritical
Hamiltonian Hopf bifurcations.
For $\ell > 1$ the right end $\lambda = \sqrt{2\ell}-\ell$ corresponds
to a supercritical Hamiltonian Hopf bifurcation while the left end
$\lambda = -\sqrt{2\ell}-\ell$ corresponds to a subcritical Hamiltonian
Hopf bifurcation.
These $\lambda$--intervals are represented by the horizontal lines in
\fref{fig:pictureofmu-ellcriticalvalues} for equally spaced
values of~$\ell$.
It is for these values that pinched tori occur --- not a bifurcation,
but a critical element for the description of the dynamics.

We call the diagram in \fref{fig:pictureofmu-ellcriticalvalues}
the \emph{amended bifurcation diagram}.
It combines the bifurcation diagram, discussed
in~\sref{bifurcation diagram kappa != 0}, and the values
$(\lambda,\mu,\ell)$ for which
$\cH_{\lambda}^{-1}(h_c) \cap \cP_{\mu \ell}$
is a topological (non-smooth) circle.
Each point in this diagram thus corresponds to a critical value of
the energy-momentum mapping~\eqref{reducedenergymomentummapping}
and the diagram is used as a starting point for deducing the structure
of the set of critical values of~$\EM$.

\subsection{Set of critical values of $\boldsymbol{\EM}$}
\label{sets of critical values}

\noindent
To obtain the set of critical values~$\CV$ of the energy-momentum
mapping~$\EM$ we consider different cases for the parameters
$\delta$, $\lambda_1$, $\lambda_2$ appearing in the Hamiltonian
function.
For fixed values of $\delta$, $\lambda_1$, $\lambda_2$ the relation
$\lambda = \delta + \lambda_1 \mu + \lambda_2 \ell$
in~\eqref{linearparameterdependence} defines an embedding of the
$(\mu, \ell)$--plane into the $(\lambda,\mu,\ell)$--space.
The intersection of this embedded plane with the amended bifurcation
diagram provides information that allows us to reconstruct a large
part of~$\CV$.
In this section, we determine $\CV$ for different choices of $\delta$
while fixing $\lambda_1 = \lambda_2 = 0$,
that is, we consider only vertical planes
$\lambda = \text{constant}$ in the $(\lambda,\mu,\ell)$--space.
The study of such vertical planes gives a complete description
of possible behaviors also for slightly tilted planes;
the only exception is the (degenerate) $\lambda = 1/(2\kappa)$
where a slight tilt qualitatively changes $\CV$.
For more strongly tilted planes, arguments similar to the ones
we use below allow to determine $\CV$ for any other choice of
$\delta$, $\lambda_1$, $\lambda_2$ and also for the case
$\kappa=0$.
Nevertheless, a complete description of all possible cases is beyond
the aim of this paper.

\subsubsection{Case $\boldsymbol{\delta = \lambda_1 = \lambda_2 = 0}$}
\label{sec:CVCase1}

\noindent
In this `undetuned' case the $(\mu, \ell)$--plane embeds as the
$(\lambda{=}0)$--plane in the $(\lambda,\mu,\ell)$--space.
We can deduce a large part of $\CV$ by checking the intersection
of the $(\lambda{=}0)$--plane with the amended bifurcation diagram
in figure~\ref{fig:pictureofmu-ellcriticalvalues}.
The plane $\lambda = 0$ intersects the three
surfaces of $h$--isolated critical values along the lines
$\mu = \pm \ell$, $0 \le \ell \le 2$ and $\mu = 0, \ell \le 0$,
see \fref{fig:partialcvlambda=0}(left).
At $\ell = 2$ the two lines $\mu = \pm \ell$ end at supercritical
Hamiltonian Hopf bifurcations while the line $\mu = 0$ extends
indefinitely.
These three lines give three curves of critical values of~$\EM$,
where the value of~$h_c$ has to also be taken into account as
discussed in \sref{amended bd}.
In particular, the curves $\CV_{23}$ and $\CV_{13}$ have parts
$\CV_{23}^0 = \CV_{23}^+$ and $\CV_{13}^0 = \CV_{13}^+$ that end
at supercritical Hamiltonian Hopf bifurcations while
$\CV_{12} = \CV_{12}^0$ extends indefinitely.

The parts $\CV_{ij} \backslash \CV_{ij}^+$, $ij = 23, 13$ have
$h_c = h_{\min}$ --- here the normal $1$--mode respectively the
normal $2$--mode is stable as
$\cH_0^{-1}(h_{\min}) \cap \cP_{\mu \ell}$
is a single point, the singular point reduced from the normal mode.
The rest of $\CV$ consists of values where the energy level touches
the reduced space from outside, in a regular point
of~$\cP_{\mu \ell}$.
This implies that for such a value the energy~$\cH_0$ is at minimum
for given $(\mu, \ell)$ and the set of such values
$(\mu,\ell,h_{\min})$ yields the boundary of the image of~$\EM$.
We denote by $\BV \subseteq \CV$ the surface
$\{ (\mu,\ell,h_{\min}) : (\mu,\ell) \in \R^2 \}$, note that $\BV$
also contains the lines $\CV_{ij} \backslash \CV_{ij}^+$,
$ij = 23, 13$.
The set $\CV$ of critical values is shown in
\fref{fig:cvlambda=0}(right).

\begin{figure}
  \centering
  \includegraphics[width=6cm]{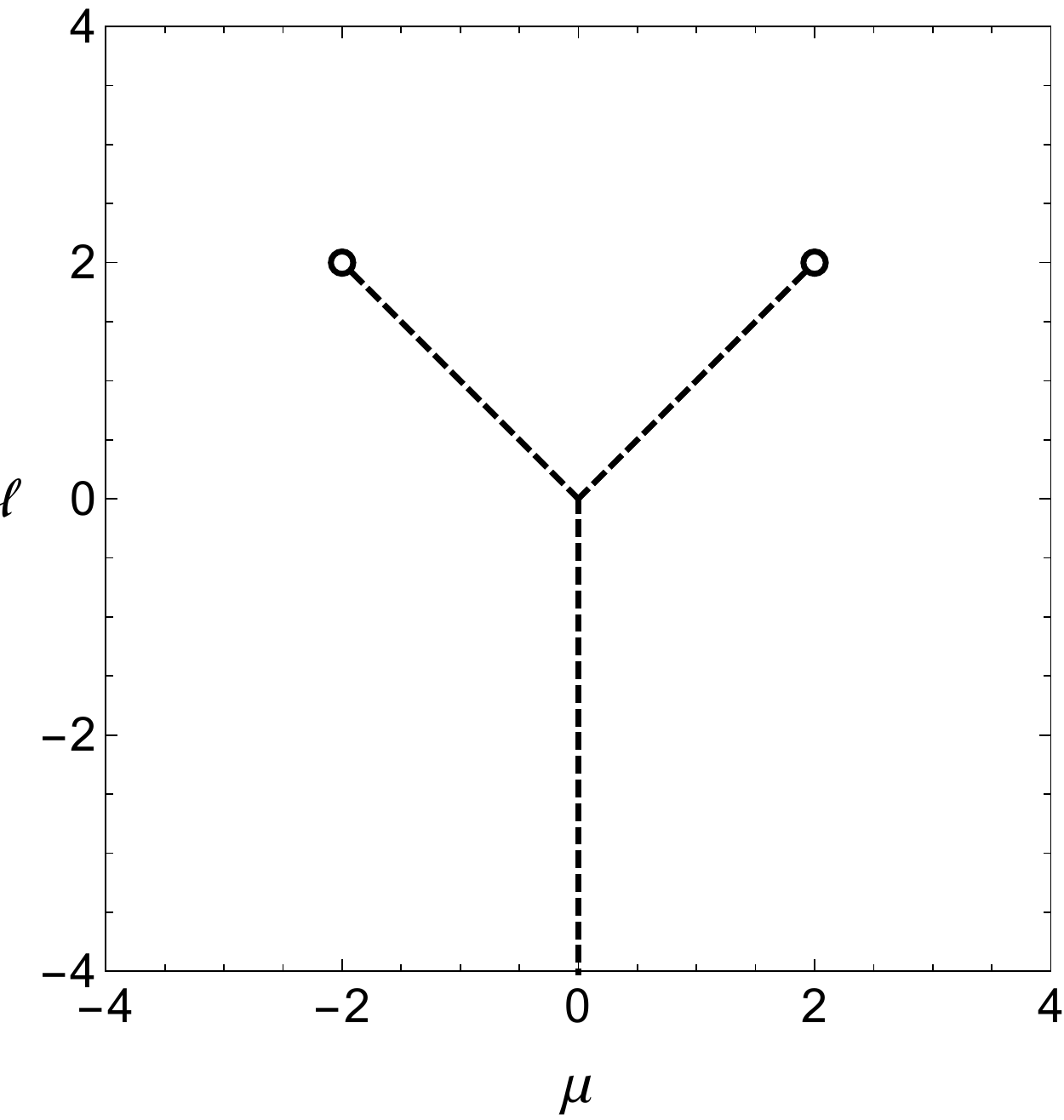}
  \hspace{1cm}
  \includegraphics[width=7cm]{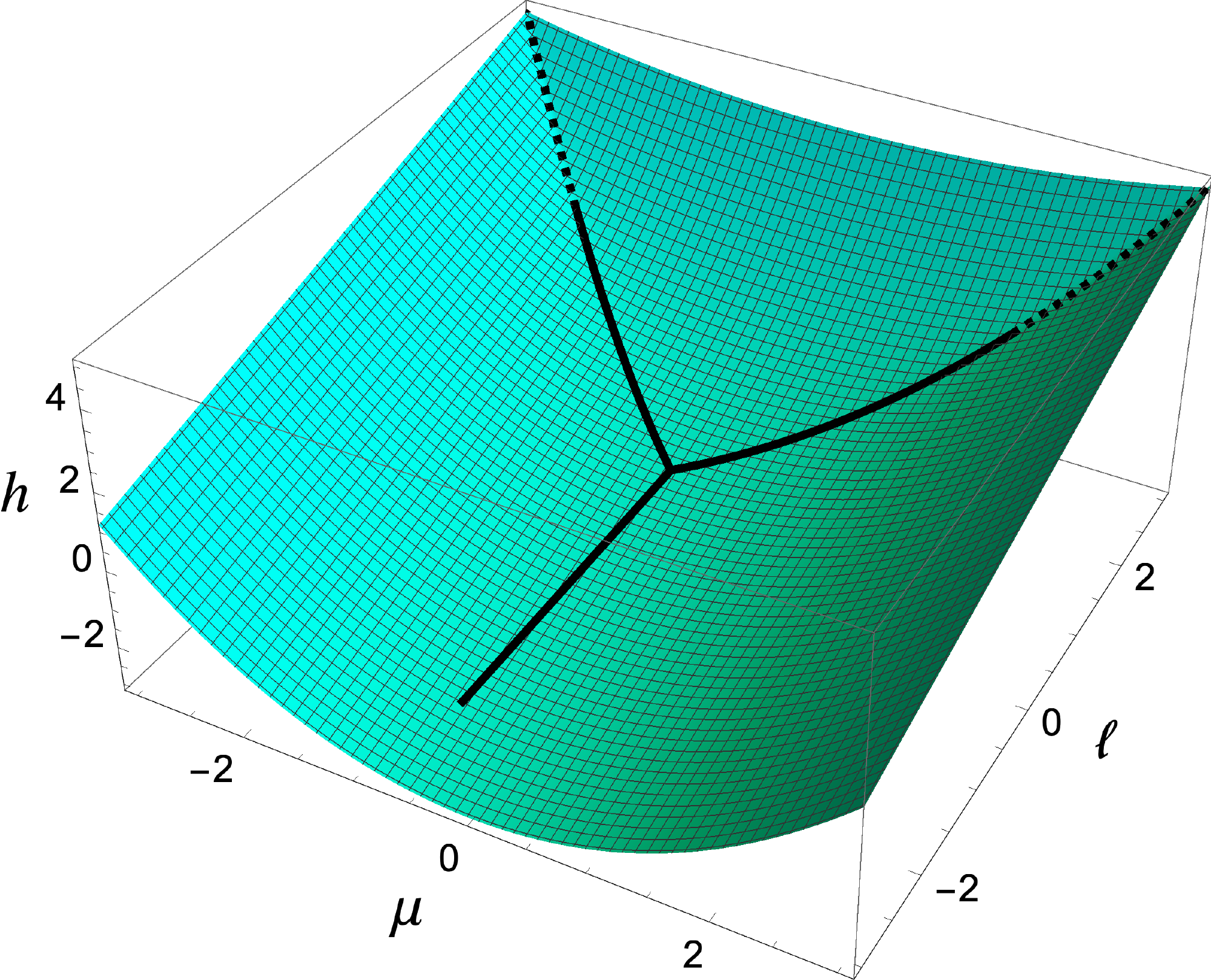}
  \caption{
    Left: intersection of the plane $\lambda = 0$ with the amended
    bifurcation diagram.
    The dashed lines parametrise families of unstable periodic
    orbits, the $\circ$ stand for supercritical Hamiltonian Hopf
    bifurcations.
    Right: set $\CV$ of critical values of $\EM$ for
    $\delta = \lambda_1 = \lambda_2 = 0$.
    The three solid lines lie above the surface and at the two
    dashed curves the otherwise smooth surface~$\BV$ has two
    creases; the transition is at the two supercritical Hamiltonian
    Hopf bifurcations.
    Values in the interior of each solid curve correspond to the
    Cartesian product of a two-dimensional pinched torus with~$\T^1$;
    values on the dashed lines to~$\T^1$; values on the surface
    to~$\T^2$; regular values above the surface to~$\T^3$.
  \label{fig:partialcvlambda=0} 
  \label{fig:cvlambda=0} }
\end{figure}

The set~$\Reg$ of regular values $(\mu,\ell,h)$ of~$\EM$
parametrises the Lagrangean~$\T^3$ in phase space.
Critical values on $\BV$ parametrise smooth $\T^2$ in phase space
which are also $\Phi$--orbits, provided that they do not belong to
$\CV_{13}$ or $\CV_{23}$.
Critical values on $\CV_{13} \cap \BV$ or $\CV_{23} \cap \BV$ lift
to~$\T^1$.
Critical values along the three threads of critical values
$\CV_{ij}^0$ correspond to singular fibres, the Cartesian product
of a two-dimensional pinched torus, see \fref{fig:pinchedtorus},
with a~$\T^1$, the latter associated to a circle action that acts
freely on the fibre.
Note that there is no globally defined circle action arising from a
linear combination of $\X{N}$ and~$\X{L}$ that acts freely on fibres
over all three curves of critical values.
Finally, the critical value $(\mu,\ell,h)=(0,0,0)$, where the three
threads meet, corresponds to a singular fibre, which can be described
as a~$\T^3$ where a $\T^2$~orbit has been `pinched' to a point.

\begin{remark}
It is the undetuned case where the central equilibrium is in
$1{:}1{:}{-}2$~resonance and the level set $\cH_0^{-1}(0)$ passes
through the cuspidal singularity of~$\cP_{00}$, yielding a
topological (non-smooth) circle.
Hence, in three degrees of freedom each regular point gets a~$\T^2$
attached and all these form the stable$=$unstable manifold of the
central equilibrium, which has isotropy~$\T^2$
under~\eqref{oscillatorsymmetry}, revealing the
$1{:}1{:}{-}2$~resonant equilibrium to be unstable (despite being
linearly stable).
This is reminiscent of both the `phantom kiss' at a periodic orbit
undergoing a $1{:}3$~normal-internal resonance, compare
with~\cite{AM78, hh07}, and the normal $1{:}{-}2$~resonance in
two degrees of freedom, compare with~\cite{AO84, mlb87}.
\end{remark}

\subsubsection{Case $\boldsymbol{\delta < 0,\,  \lambda_1 = \lambda_2 = 0}$}
\label{sec:CVCase2}

\noindent
Adding a small detuning $\delta \ne 0$ qualitatively modifies the
set of critical values only in a neighbourhood of the origin.
We first consider the case $\delta < 0$, $\lambda_1=\lambda_2=0$.
Here the $(\mu,\ell)$--plane embeds in $(\lambda,\mu,\ell)$--space
as the plane $\lambda = \delta < 0$, and all such planes have
qualitatively the same intersection with the amended bifurcation
diagram.

\begin{figure}
  \centering
  \includegraphics[width=6cm]{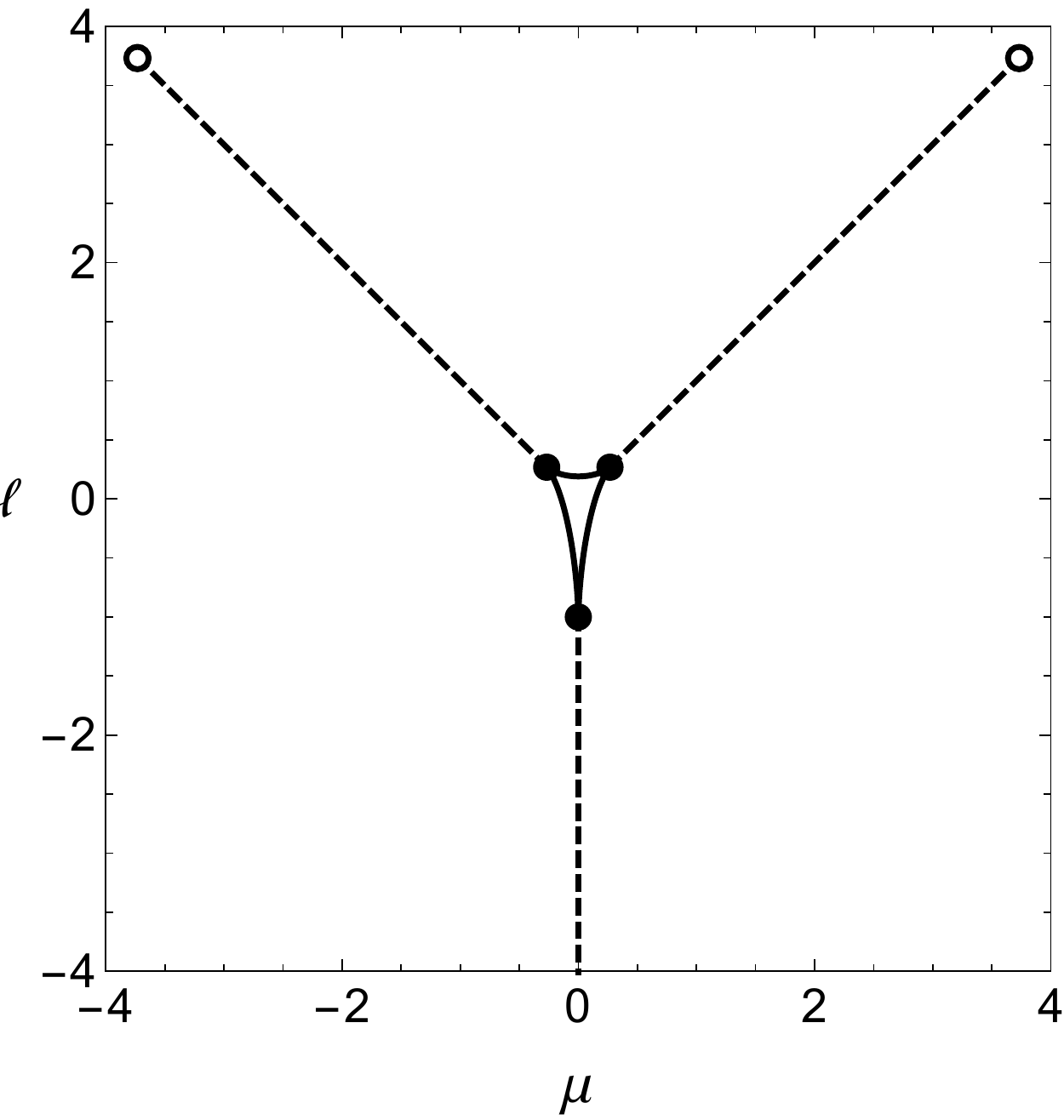}
  \hspace{1cm}
  \includegraphics[width=7cm]{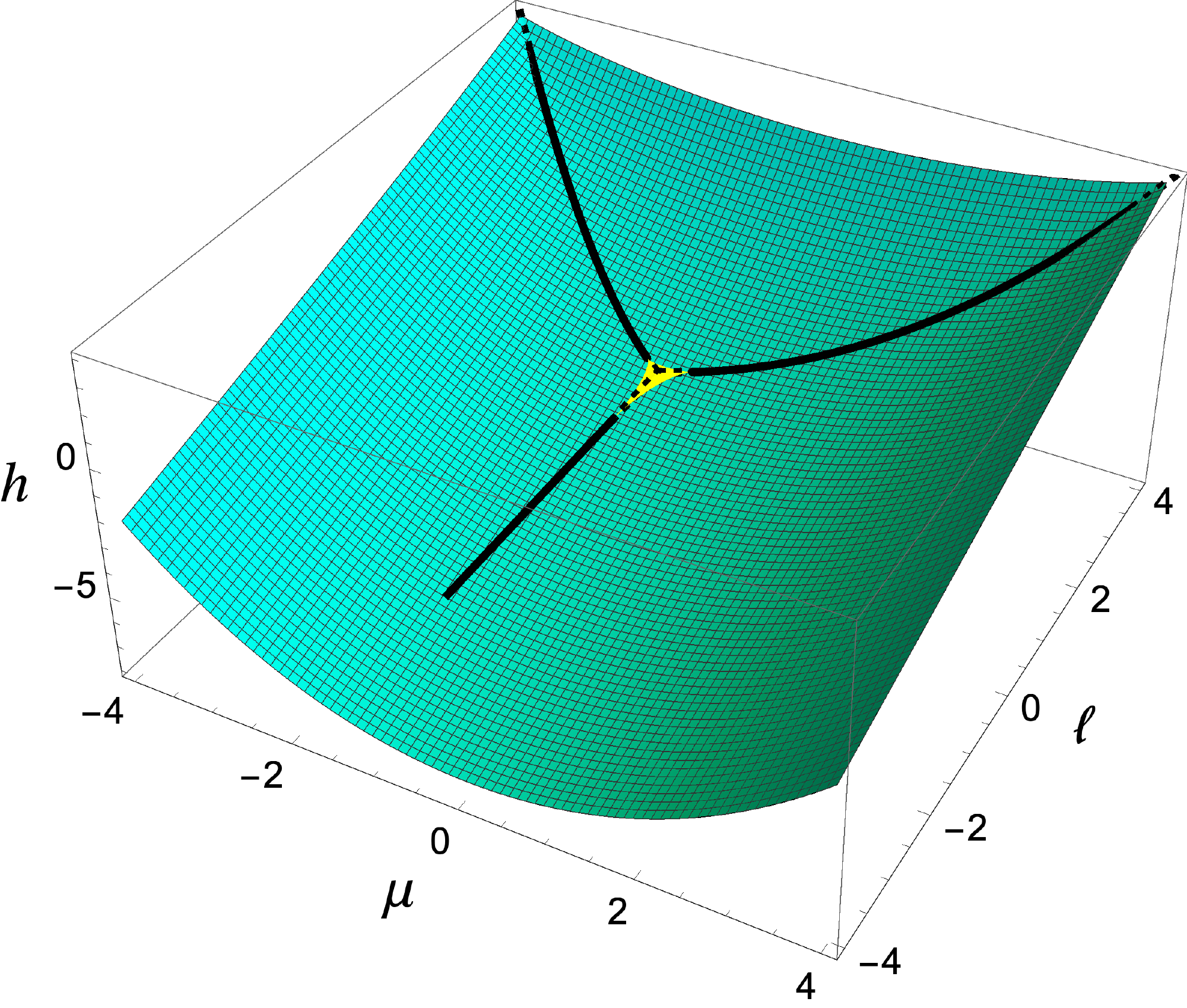}
  \caption{
    Left: intersection of the plane $\lambda = -1$ with the
    amended bifurcation diagram.
    The solid lines parametrise families of centre-saddle
    bifurcations, the $\bullet$ stand for subcritical Hamiltonian
    Hopf bifurcations, and
    the $\circ$ stand for supercritical Hamiltonian Hopf
    bifurcations.
    Right: set of critical values $\CV$ for $\delta=-1$,
    $\lambda_1=\lambda_2=0$.
    For an enlargement of the central region see
    \fref{cv detail lambda<0}.
  \label{cv lambda<0} }
\end{figure}

\begin{figure}
   \centering
   \includegraphics[width=7cm]{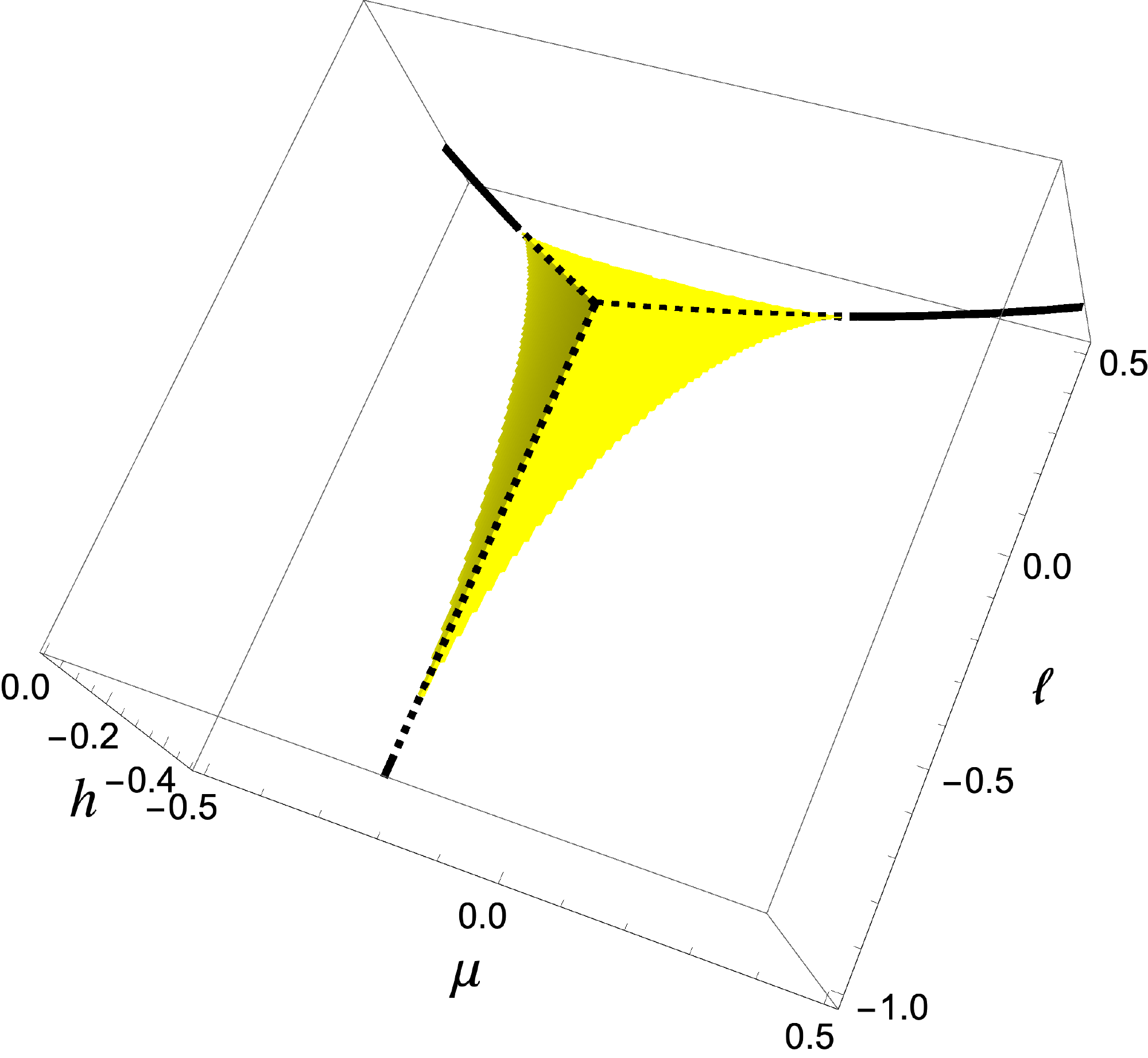}
   \caption{
     Detail of $\CV$ for $\lambda=-1$.
   \label{cv detail lambda<0}}
\end{figure}
 
A plane $\lambda = \delta$ intersects the amended bifurcation
diagram along three straight lines and a curvilinear
triangle~$\DV$, see \fref{cv lambda<0}(left).
Each of the straight lines joins $\DV$ at a vertex
corresponding to a subcritical Hamiltonian Hopf bifurcation.
The two straight lines parametrised by $\mu = \pm \ell$ with
$1-\lambda-\sqrt{1-2\lambda} < \ell < 1-\lambda+\sqrt{1-2\lambda}$
end at supercritical Hamiltonian Hopf bifurcations.
The straight line parametrised by $\ell < -\lambda^2$ at $\mu = 0$
extends indefinitely.
The edges of~$\DV$ are the intersections with the surfaces of
centre-saddle bifurcations.

The set of critical values~$\CV$ is depicted in
\fref{cv lambda<0}(right).
Away from the origin, the description of $\CV$ from
\sref{sec:CVCase1} can be repeated verbatim for this case.
However, the situation is different near the origin,
see \fref{cv detail lambda<0}.
In the set of critical values we note the appearance of a
tetrahedral surface~$\TV$ of critical values, corresponding
to the appearance of~$\DV$ in the intersection of the plane
$\lambda = \delta$ with the amended bifurcation diagram.
The surface~$\TV$ separates the set~$\Reg$ of regular values
into two connected components: $\Reg'$ outside $\TV$ and
$\Reg''$ inside~$\TV$.
Values in $\Reg'$ lift to~$\T^3$.
However, values $v = (\mu,\ell,h) \in \Reg''$ correspond to the
union of two disjoint $\T^3$ in phase space.
We denote by $\T^3_A(v)$ and $\T^3_B(v)$ the two resulting disjoint
families of tori parametrised by $v \in \Reg''$.

The curvilinear triangle $\DV$ of centre-saddle bifurcations
embeds in $\TV$ as the union of three curved edges~$\DV'$
that splits the ``upper'' and ``lower'' parts of~$\TV$.
The ``upper'' part of $\TV$ is visible in \fref{cv detail lambda<0}.
It consists of three faces that we denote by $\FV_+$, $\FV_-$
and~$\FV_0$, respectively, and the three straight lines
$\CV_{ij}^+ \backslash \CV_{ij}^0$ of stable normal modes.
To be precise, $\FV_0$ has $\ell > |\mu|$ and lies between
$\CV_{13}$, $\CV_{23}$ and $\DV'$ while $\FV_+$ has
$\mu > \max(\ell, 0)$ and lies between $\CV_{12}$, $\CV_{23}$
and~$\DV'$ and $\FV_-$ has $\mu < \min(-\ell, 0)$ and lies between
$\CV_{12}$, $\CV_{13}$ and~$\DV'$.
We denote by $\FV_e^\circ$ the union of the faces $\FV_{\plminul}$
and by $\FV_e$ the union of $\FV_e^\circ$ with the pairwise common
topological boundaries of $\FV_{\plminul}$ (the dashed lines in
\fref{cv detail lambda<0}), so $\FV_e$ constitutes the ``upper''
part of~$\TV$.
The ``lower'' part consists of a single face $\FV_h$ with
topological boundary $\DV'$.
Note that in all four cases we do not consider the topological
boundary of a face to belong to the face.

Values on the ``upper'' part~$\FV_e^\circ$ lift to the disjoint
union of a $\T^2$ and a~$\T^3$.
Here, one of the families $\T^3_{A,B}(v)$ in $v \in \Reg''$
shrinks down to a $\T^2$ as $v$ approaches~$\FV_e^\circ$.
This allows us to `globally' distinguish the two families:
we take $\T^3_B(v)$ to be the family that shrinks down to~$\T^2$
and $\T^3_A(v)$ the family that can be smoothly continued outside
of~$\Reg''$ to the regular $\T^3$--fibration over~$\Reg'$.
Dynamically, the lower dimensional invariant torus~$\T^2$ is elliptic.
The pairwise common boundaries of $\FV_{\plminul}$ meet at the origin.
The origin lifts to the disjoint union of a point, the central
equilibrium, and a~$\T^3$ in phase space.
Other values on these pairwise common boundaries lift to the
disjoint union of a normal mode (a~$\T^1$) and a~$\T^3$.

\begin{figure}
  \centering
  \includegraphics[width=6cm]{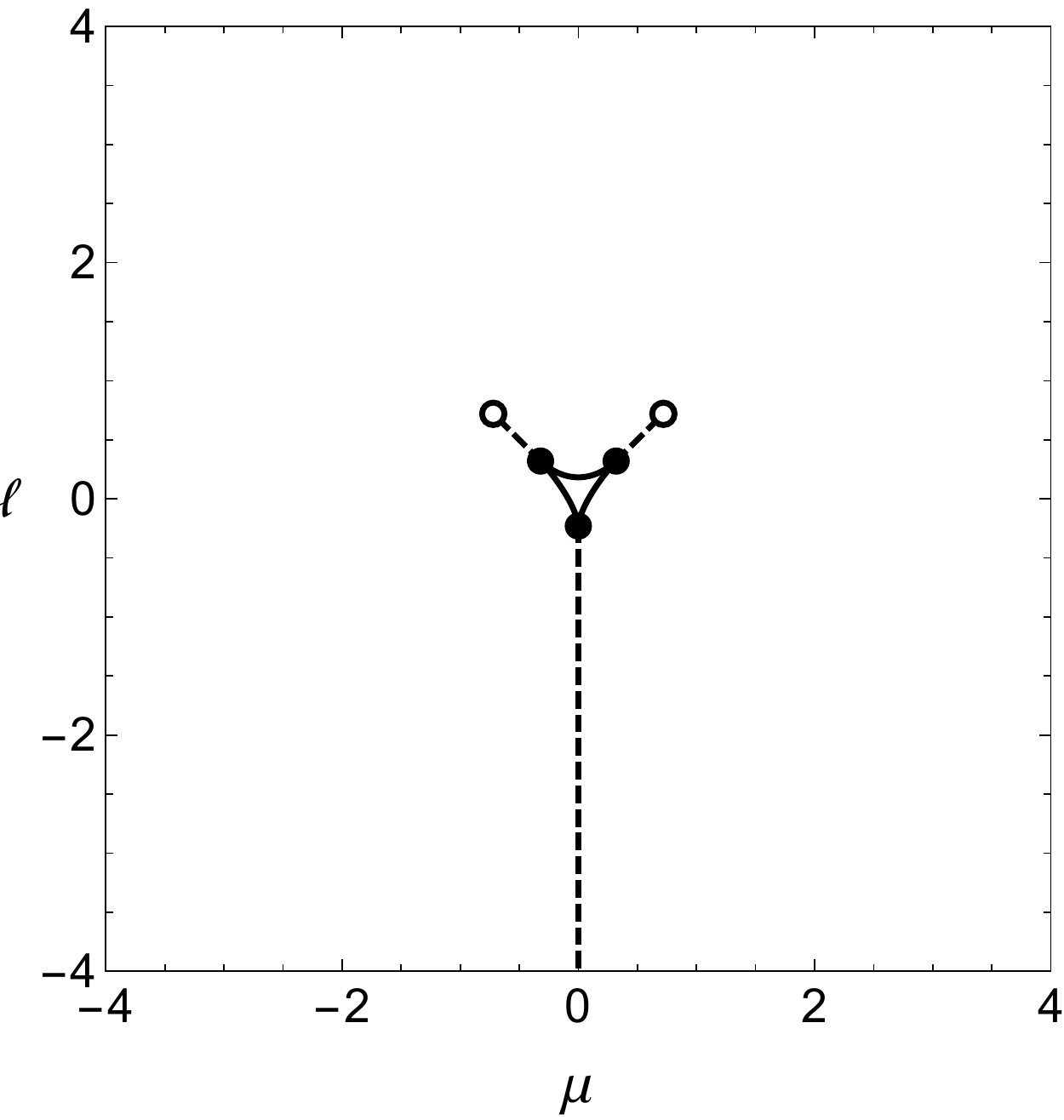}
  \hspace{1cm}
  \includegraphics[width=7cm]{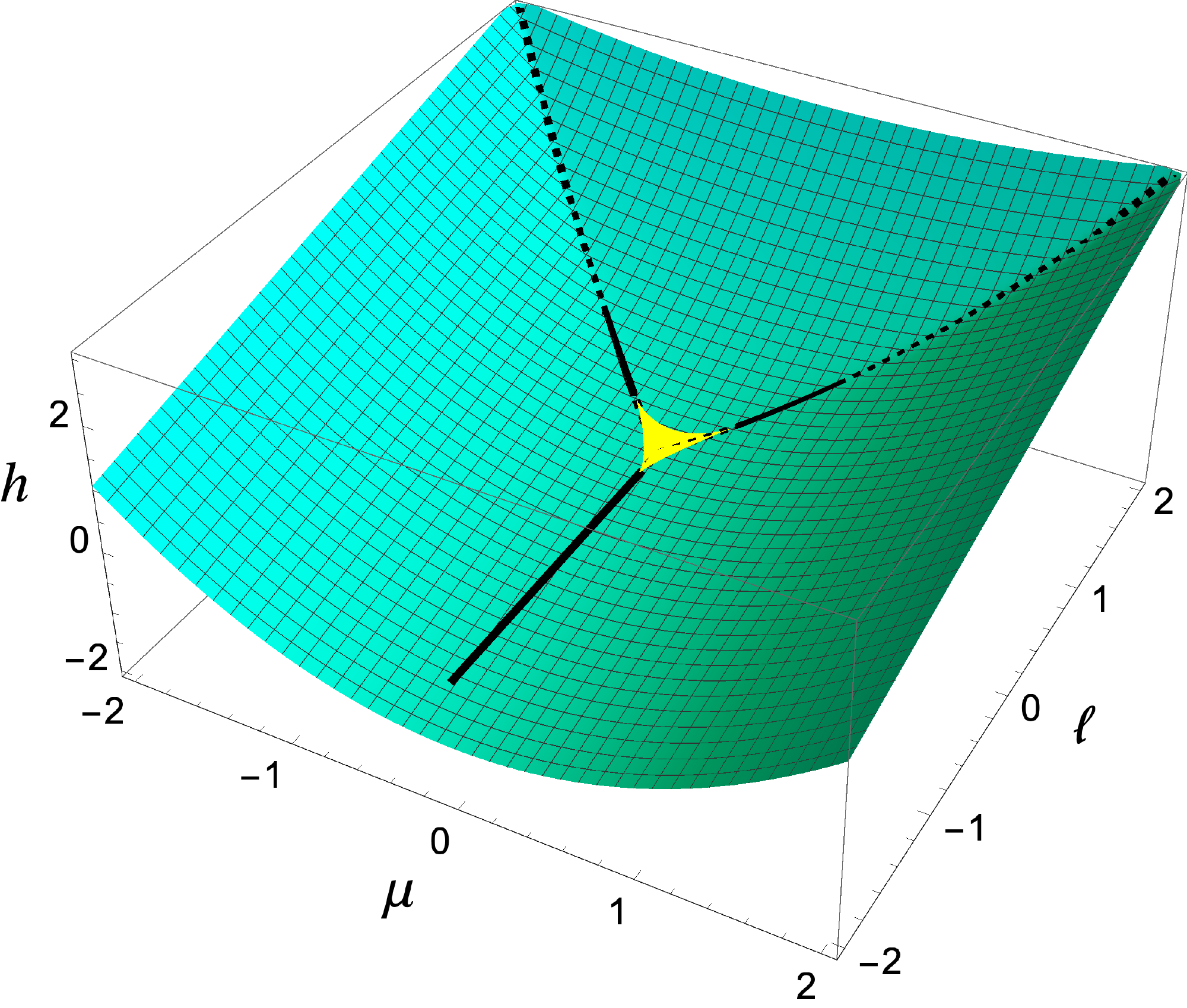}
  \caption{
    Intersection of the plane $\lambda = 0.48$ with the amended
    bifurcation diagram and the corresponding set of critical
    values~$\CV$.
  \label{cv 0<lambda<1/2} }
\end{figure}

Values on the ``lower'' part~$\FV_h$ lift to a connected singular
fibre.
The singular fibre is the Cartesian product of a figure~eight with
a~$\T^2$, that is, it corresponds to the two disjoint families
$\T^3_A(v)$ and $\T^3_B(v)$ in $\Reg''$ getting glued together
along a common~$\T^2$.
Dynamically, this lower dimensional invariant torus~$\T^2$ is
hyperbolic and the two glued $\T^3$ correspond to its
stable=unstable manifold.
The face~$\FV_h$ meets each one of the faces~$\FV_{\plminul}$ along
a family of centre-saddle bifurcations (an edge of~$\DV'$), where
the hyperbolic $\T^2$ from~$\FV_h$ and the elliptic $\T^2$
from~$\FV_e^\circ$ meet and disappear.
Moving on $\FV_h$ toward an edge of~$\DV'$, the component~$\T^3_B$
(glued with $\T^3_A$ to form the corresponding singular fibre)
shrinks and then disappears at the edge of~$\DV'$.

\begin{remark}
\label{passagethrough}
It is instructive to have a look at the quantitative changes that
occur as $\delta$ increases towards~$0$ before the qualitative
change at the case $\delta = 0$ discussed before.
Indeed for $\delta \nearrow 0$ the tetrahedral surface~$\TV$ gets
smaller and smaller, shrinking to the critical value
$(\mu, \ell, h) = (0, 0, 0)$ where the three threads that we have
seen to exist for $\delta = 0$ meet.
For $\delta > 0$ another tetrahedral set~$\TV^{\prime}$ emerges,
`flipped upside-down' compared to~$\TV$.
Thus, while passing through the $1{:}1{:}{-}2$~resonance, the three
straight lines $\CV_{ij}^+ \backslash \CV_{ij}^0$ parametrising
the stable normal modes of the elliptic central equilibrium shrink
down (and re-grow) as the central equilibrium momentarily loses its
stability at the $1{:}1{:}{-}2$~resonance itself.
\end{remark}

\subsubsection{Cases $\boldsymbol{\delta > 0,\,  \lambda_1 = \lambda_2 = 0}$}
\label{sec:CVCases345}

\noindent
We now turn our attention to the case $\delta > 0$.
Here the $(\mu,\ell)$--plane embeds in $(\lambda,\mu,\ell)$--space
as the plane $\lambda = \delta > 0$ and we must consider three
subcases depending on the value of~$\delta$.
Indeed, when comparing with the case $\delta < 0$ we see that the
planes $\lambda = \delta > 0$ intersect the amended bifurcation
diagram in five different ways.
We omit the transitional cases $\delta = \frac{1}{2}$, $\delta = 1$
and concentrate on the three open intervals $\opin{0}{\frac{1}{2}}$,
$\opin{\frac{1}{2}}{1}$ and $\opin{1}{\infty}$ of small,
intermediate and large positive detuning~$\delta$.

\begin{figure}
  \centering
  \includegraphics[width=6cm]{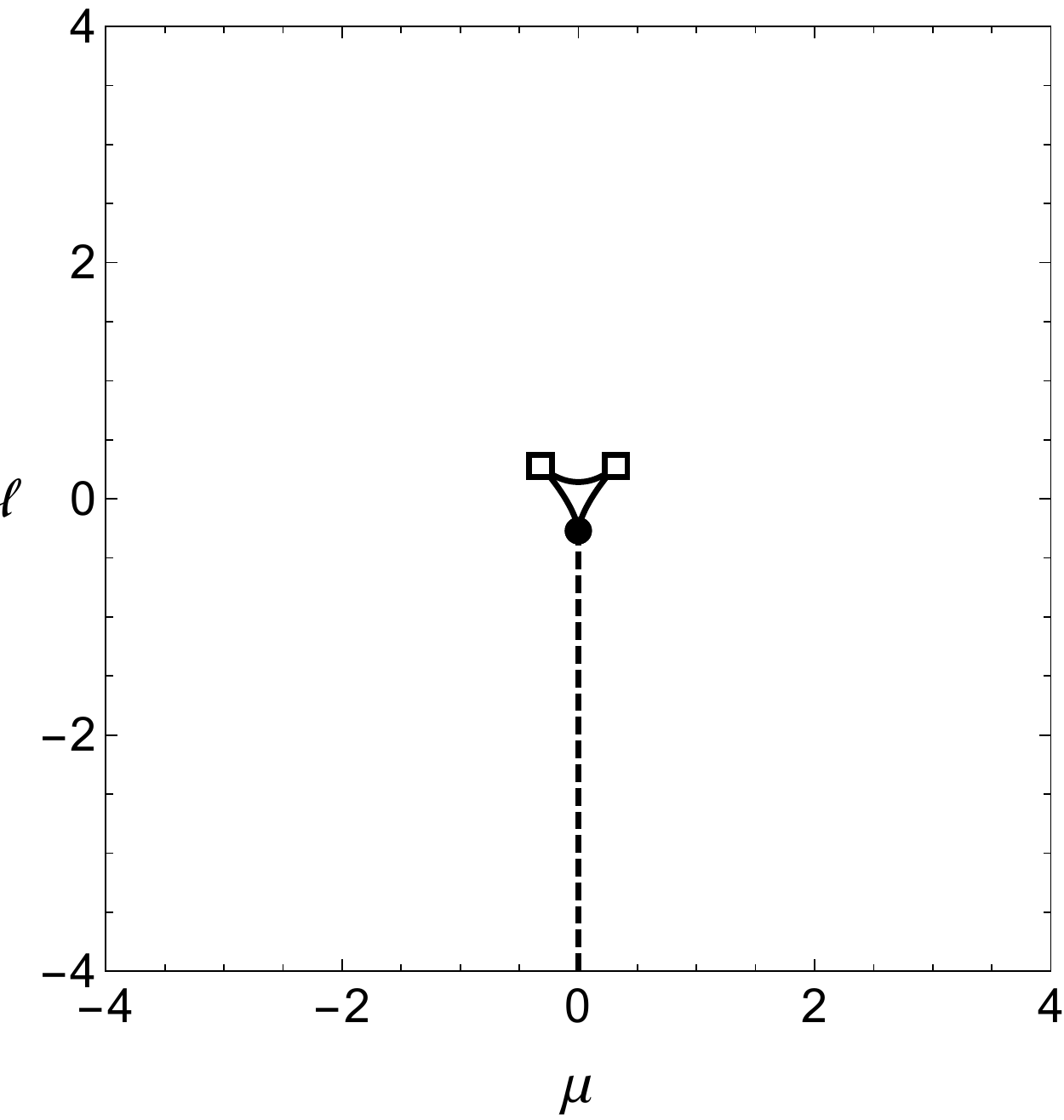}
  \hspace{1cm}
  \includegraphics[width=7cm]{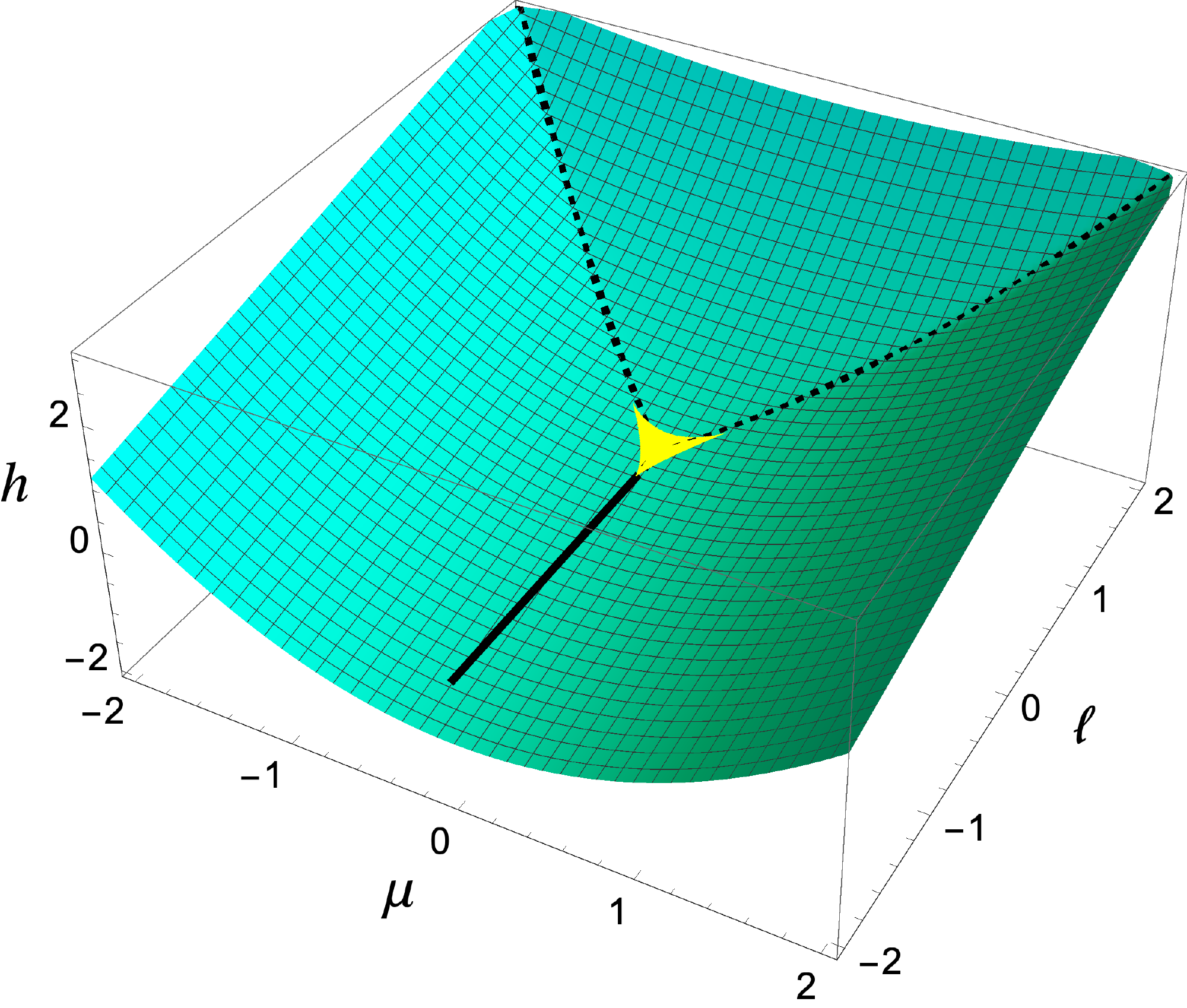}
  \caption{
    Left: intersection of the plane $\lambda = 0.52$ with the
    amended bifurcation diagram.
    The~{\scriptsize $\square$} stand for cusp bifurcations and the~$\bullet$ marks
    the subcritical Hamiltonian Hopf bifurcation.
    Right: corresponding set of critical values $\CV$ of~$\EM$.
    For an enlargement of the central region see
    \fref{cv detail 1/2<lambda<1}.
  \label{cv 1/2<lambda<1} }
\end{figure}

\begin{figure}
   \centering
   \includegraphics[width=7cm]{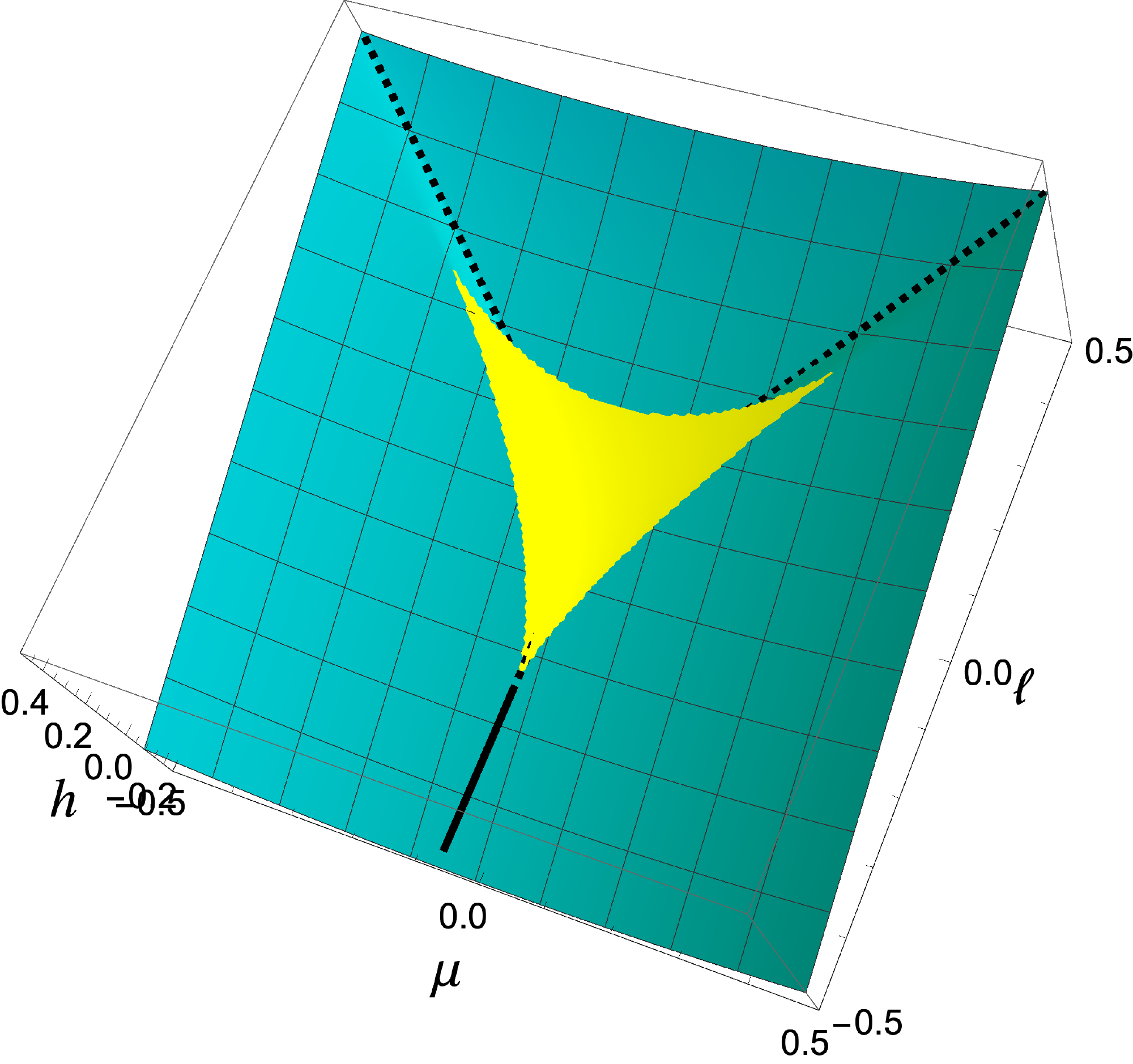}
   \hspace{1cm}
   \includegraphics[width=7cm]{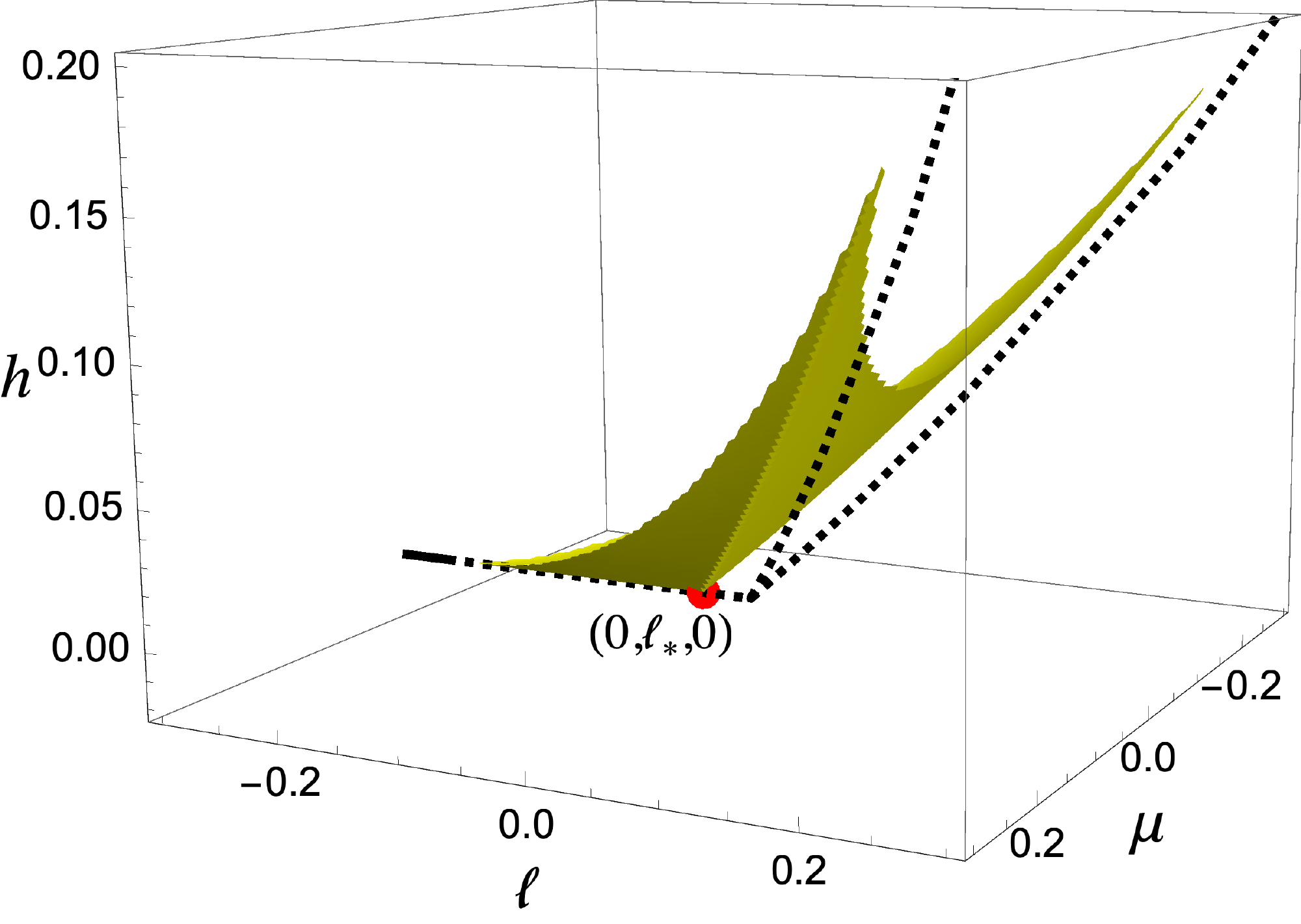}
   \caption{Two views of $\CV$ for $\lambda=0.52$ focusing on $\TV''$.
   At the right panel we are not showing the whole set $\BV$ so as to make more prominent the features of $\TV''$ --- we are drawing however the curves $\CV_{12}$, $\CV_{23}$, $\CV_{13}$ and we note that $\CV_{13}$, $\CV_{23}$, and the part of $\CV_{12}$ between the origin and $(0,\ell_*,0)$ are contained in $\BV$.
   Note that $\TV''$ touches $\BV$ only along the `creases' $\mathcal L_+$ and $\mathcal L_-$ of $\TV''$ that start at $(0,\ell_*,0)$ and end at the two topmost `horns'.
   \label{cv detail 1/2<lambda<1} }
\end{figure}

\paragraph{Case $\boldsymbol{0 < \delta < \frac{1}{2}}$.}
The intersection of the plane $\lambda = \delta$ with the amended
bifurcation diagram as depicted in \fref{cv 0<lambda<1/2}(left) is
qualitatively the same as in the case $\delta < 0$.
The set of critical values~$\CV$ as depicted in
\fref{cv 0<lambda<1/2}(right) is also similar, with the only
difference that the tetrahedral set~$\TV^{\prime}$ of critical
values has been flipped upside-down, see remark~\ref{passagethrough}.
Therefore, it is now the ``upper'' part where we find the
face~$\FV_h$ with hyperbolic~$\T^2$, while the ``lower''
part~$\FV_e$ consists of three creases parametrising the normal modes
joined by three faces~$\FV_{\plminul}$ where we find elliptic~$\T^2$.
The discussion in \sref{sec:CVCase2} carries over
{\it mutatis mutandi}.

\begin{remark}
It is instructive to have a look at the quantitative changes that
occur as $\delta$ increases towards~$\frac{1}{2}$.
Indeed, exactly at $\delta = \frac{1}{2}$ the two vertices of the
tetrahedral surface~$\TV^{\prime}$ correspond to degenerate
Hamiltonian Hopf bifurcations.
Moreover, between these two vertices extends a whole curve
of cusp bifurcations~$\Cusp{3}$.
This is probably the most important difference to considering
`tilted' planes with $(\lambda_1, \lambda_2) \neq (0, 0)$ which
e.g.\ for small $\lambda_2 > 0$ (and $\lambda_1 = 0$) `first'
contain two degenerate Hamiltonian Hopf bifurcations (for an
exceptional value $\delta^* < \frac{1}{2}$) and (increasing
$\delta$ slightly above~$\delta^*$) contain four isolated values
of cusp bifurcations.
For $\lambda_1 \neq 0$ the two degenerate Hamiltonian Hopf
bifurcations lie in different tilted planes.

For $\delta \nearrow \frac{1}{2}$ (with $\lambda_1 = \lambda_2 = 0$)
the face $\FV_0$ of~$\TV^{\prime}$ moves closer and closer to the
surface~$\BV$ of minimal energy values, until at
$\delta = \frac{1}{2}$ the intersection $\TV^{\prime} \cap \BV$
consists of~$\FV_0$, of the values
$(\mu, \ell, h) = (\frac{1}{2}, \pm \frac{1}{2}, \frac{1}{2})$ of
degenerate Hamiltonian Hopf bifurcations and
$(\mu, \ell, h) = (0, 0, 0)$ of the central equilibrium, and of the
parts of $\CV_{13}$, $\CV_{23}$ that extend between the central
equilibrium and one of the degenerate Hamiltonian Hopf bifurcations.
For $\delta > \frac{1}{2}$ we denote the tetrahedral surface, which
no longer contains the origin $(\mu, \ell, h) = (0, 0, 0)$,
by~$\TV^{\prime \prime}$ and leave the description of its position
with respect to~$\BV$ to case $\frac{1}{2} < \delta < 1$ below.
\end{remark}

\paragraph{Case $\boldsymbol{\frac{1}{2} < \delta < 1}$.}
In the intersection of the plane $\lambda = \delta$ with the amended
bifurcation diagram, depicted in \fref{cv 1/2<lambda<1}(left), the
two line segments along $\mu = \pm \ell$ parametrising unstable
normal modes have disappeared.
Their endpoints, which are supercritical and subcritical Hamiltonian
Hopf bifurcations, met at $\delta = \frac{1}{2}$ and for
$\delta > \frac{1}{2}$ the corresponding vertices of the curvilinear
triangle~$\DV$ stand for cusp bifurcations.

The changes in the set of critical values, depicted in
\fref{cv 1/2<lambda<1}(right), reflect the changes in the
intersection with the amended bifurcation diagram.
We focus on the modified part of the set of critical values depicted
in \fref{cv detail 1/2<lambda<1}.
In this case we `again' have a tetrahedral
surface~$\TV^{\prime \prime}$, but its properties are different
from the cases described before.
The same structure also appears in a detuned $1{:}1{:}2$~resonance
and has been described in~\cite{Sadovskii2007}. 
Following the previously introduced terminology, we denote by
$\Reg'$ and~$\Reg''$ the two connected components of~$\Reg$, outside
and inside of~$\TV^{\prime \prime}$, respectively.
Values in $\Reg'$ lift to~$\T^3$.
Values in $\Reg''$ lift to the disjoint union of two~$\T^3$ and
therefore we can again consider two disjoint families $\T^3_A(v)$
and~$\T^3_B(v)$ for $v \in \Reg''$.

We denote by $\FV_h$ the ``upper'' part of $\TV^{\prime \prime}$
with topological boundary the embedding $\DV'$ of $\DV$
to~$\TV^{\prime \prime}$.
Values in $\FV_h$ lift to singular fibres where the families
$\T^3_A(v)$ and $\T^3_B(v)$ intersect along a hyperbolic
lower dimensional invariant torus~$\T^2$.
Two of the vertices of~$\DV'$ correspond to cusp bifurcations,
and they are part of the intersection
$\TV^{\prime \prime} \cap \BV$, while the remaining vertex
(with $\mu = 0$, not on~$\BV$) corresponds to a subcritical
Hamiltonian Hopf bifurcation where the thread~$\CV_{12}^0$
parametrising pinched tori turns into the crease
$\CV_{12}^+ \backslash \CV_{12}^0$ of the ``lower'' part~$\FV_e$
of the tetrahedron, parametrising the now stable normal $3$--mode.

The ``lower'' part $\FV_e$ of~$\TV^{\prime \prime}$ has again
three faces.
Using the same conventions as in \sref{sec:CVCase2}, we denote these
faces by~$\FV_{\plminul}$ and we denote by~$\FV_e^\circ$ their union,
noting that $\FV_e^\circ \cap \BV = \emptyset$.
Values on $\FV_e^\circ$ lift to the disjoint union of a $\T^2$ and
a~$\T^3$.
The difference with previous cases is that it is not the same
$\T^3$--family in $\Reg''$ that shrinks down to~$\T^2$ at each of
these faces whence we no longer can make a `global' choice.
Specifically, let $\T^3_A(v)$ be the family that shrinks down
to~$\T^2$ at the face~$\FV_+$.
Then, the same family $\T^3_A(v)$ shrinks down to $\T^2$ at~$\FV_-$.
However, it is the family $\T^3_B(v)$ that shrinks down to $\T^2$
at~$\FV_0$.
Values on the common topological boundary of $\FV_-$ and~$\FV_+$
(a subset of $\CV_{12}$) lift to the disjoint union of the stable
normal $3$--mode (a smooth~$\T^1$) and a~$\T^3$.

Thus, after emanating from the central equilibrium, the normal
$3$--mode parametrised by $\CV_{12} \backslash \CV_{12}^+$ first
is stable with minimal energy $h_{\min} = 0$ until the value
\begin{displaymath}
   (\mu, \ell, h) \; = \; (0, \ell^*, 0) \;\; \in \;\;
   \CV_{12} \cap \TV^{\prime \prime} \cap \BV
   \enspace , \quad
   \ell^* \in \opin{{-}\lambda^2}{0} \enspace ,
\end{displaymath}
then is parametrised by
$\CV_{12}^+ \backslash \CV_{12}^0 \subseteq \TV^{\prime \prime}$
and loses its stability in the subcritical Hamiltonian Hopf
bifurcation at $\TV^{\prime \prime} \cap \partial \CV_{12}^0$.
The intersection $\TV^{\prime \prime} \cap \BV$ is formed by
$(0, \ell^*, 0)$ and the two values of cusp bifurcations
together with the curve segments $\mathcal{L}_+$
and~$\mathcal{L}_-$ where $\FV_+$ and~$\FV_-$, respectively,
meet with~$\FV_0$.
Values on~$\mathcal{L}_{\pm}$ lift to the disjoint union
$\T^2_A(v) \dot{\cup} \T^2_B(v)$ of two~$\T^2$, one being the
limit of the family $\T^3_B(v)$ at~$\FV_0$ and one being the
limit of the family $\T^3_A(v)$ at~$\FV_{\plmin}$. 
Note that the family~$\T^3_A(v)$ can be smoothly continued through
the face~$\FV_0$ (where $\T^3_B(v)$ shrinks down) to the regular
$\T^3$--fibration over~$\Reg'$, while the family~$\T^3_B(v)$ can be
smoothly continued through $\FV_+$ or~$\FV_-$ (where $\T^3_A(v)$
shrinks down).

In the complement of the normal modes parametrised by~$\CV_{ij}$
the surface~$\BV$ parametrises invariant $2$--tori with minimal
energy.
In particular, the two $\T^2$ parametrised by~$\mathcal{L}_{\pm}$
have the same energy.
Outside of~$\mathcal{L}_{\pm}$ the surface~$\BV$ parametrises a
single family of~$\T^2$.
To the side of~$\mathcal{L}_{\pm}$ `covered' by~$\FV_0$ this is
$\T^2_A(v)$ and to the sides `covered' by~$\FV_{\plmin}$ this is
$\T^2_B(v)$.
In a similar way, when passing through $(0, \ell^*, 0)$ the
$\T^2$--family surrounding the normal $3$--mode is~$\T^2_A(v)$,
while $\T^2_B(v)$ consists of $2$--tori even for critical values
$v = (\mu, \ell, h)$ with $\mu = 0$.
At the cusp value the two families $\T^2_A(v)$ and~$\T^2_B(v)$,
$v \in \mathcal{L}_+$ (or $v \in \mathcal{L}_-$) coincide with
the hyperbolic $\T^2$ parametrised by~$\FV_h$ as the figure~eight
yielding the stable$=$unstable manifolds of the latter has
shrunk to a point.

\begin{figure}
  \centering
  \includegraphics[width=6cm]{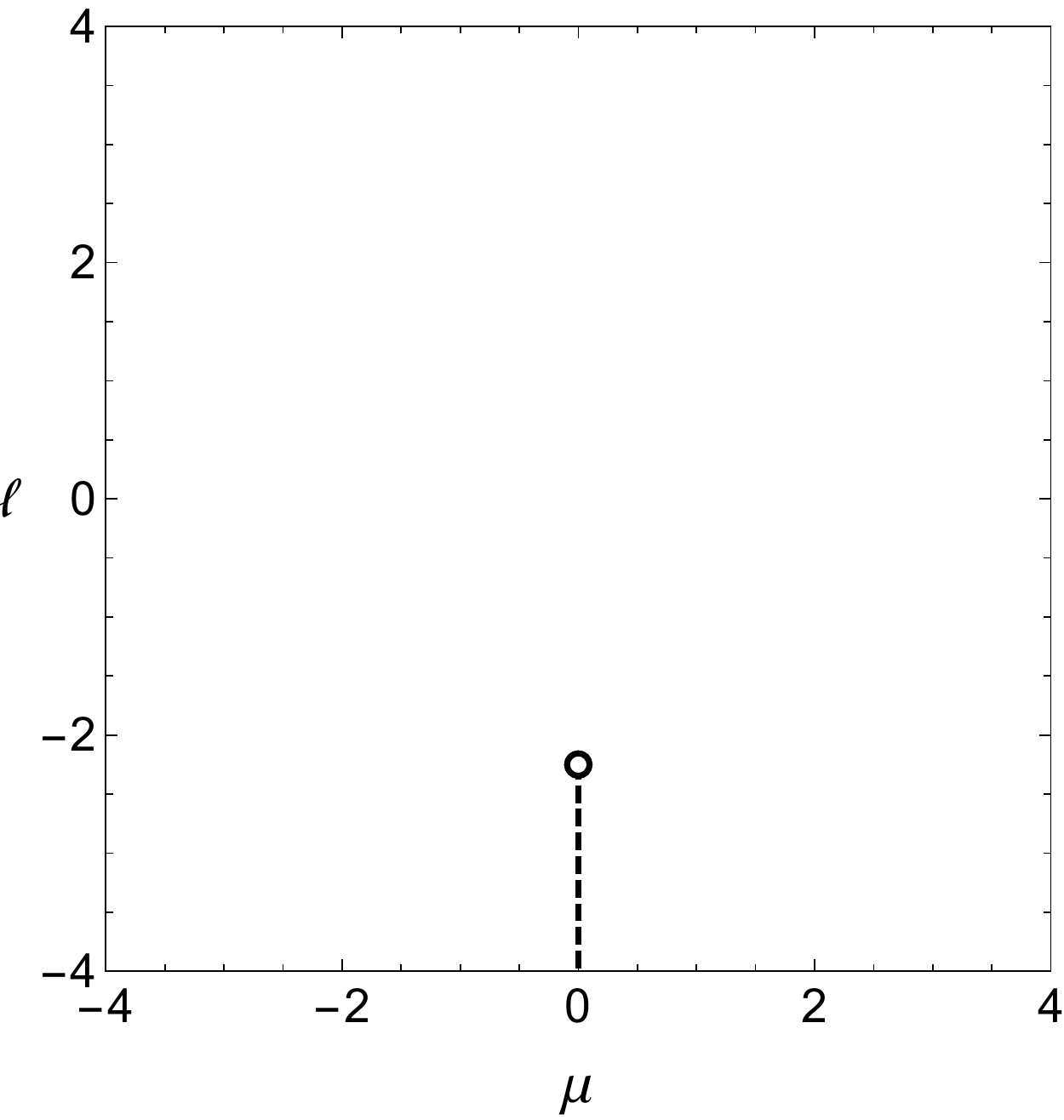}
  \hspace{1cm}
  \includegraphics[width=7cm]{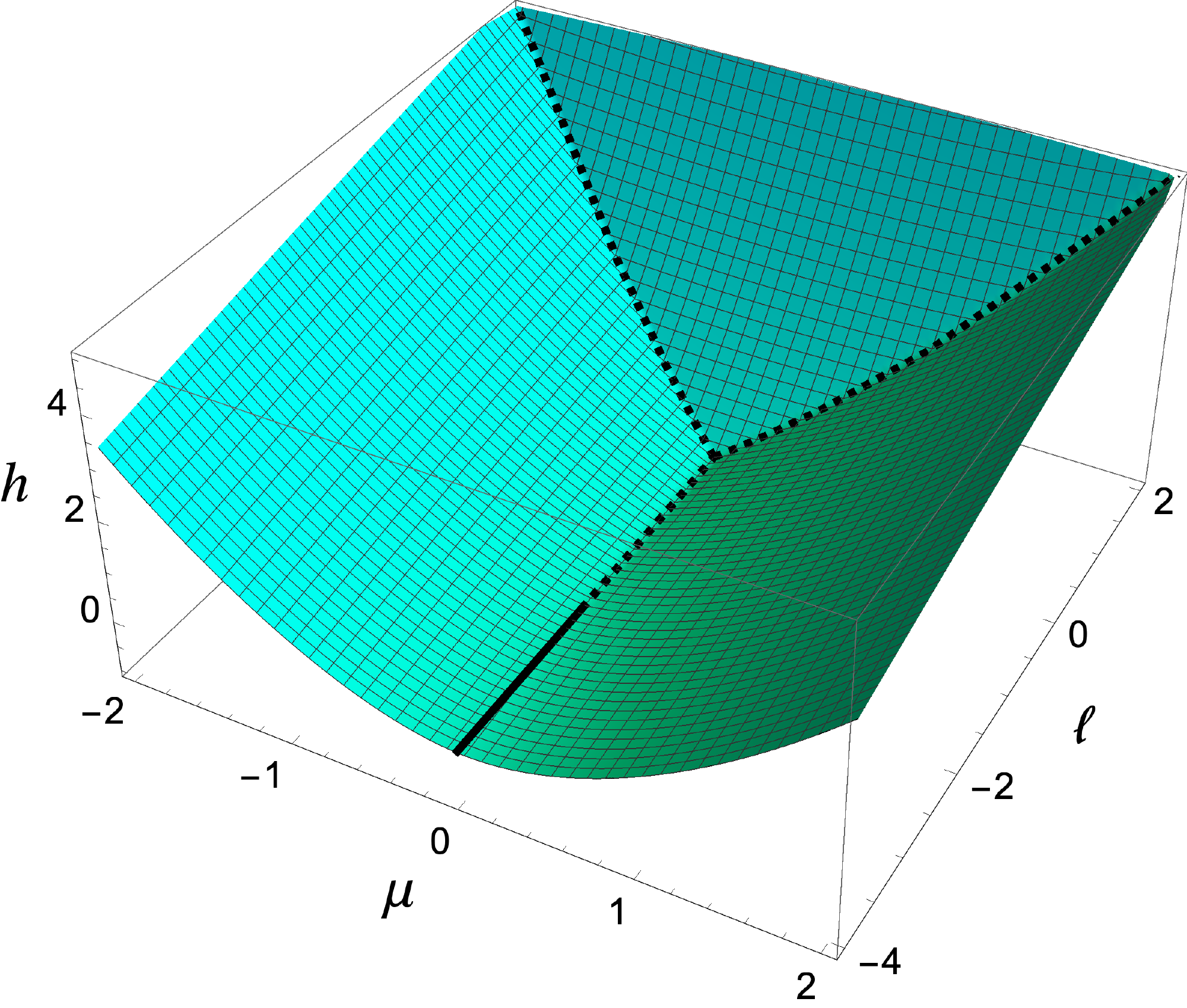}
  \caption{
    Intersection of the plane $\lambda = 1.5$ with the amended
    bifurcation diagram and the corresponding set of critical
    values~$\CV$.
  \label{cv 1<lambda} }
\end{figure}

To better understand the placement of~$\TV^{\prime \prime}$ with
respect to the rest of~$\CV$ and, in particular, to~$\BV$, consider
the straight line $\CV_{12}$ with $(\mu,\ell,h) = (0,\ell,0)$,
$\ell < 0$. 
Then place the tetrahedral structure in such a way that the edge
between $\FV_+$ and~$\FV_-$ is a subset of~$\CV_{12}$ and does not
extend to the origin (where all~$C_{ij}$ meet).
The edges~$\mathcal{L}_{\plmin}$ should then be glued to~$\BV$ while
being distinct from $\CV_{13}$ and $\CV_{23}$, which also belong
to~$\BV$.
Note that $\BV$ is not smooth along~$\mathcal{L}_{\plmin}$, along
$\CV_{13}$ and~$\CV_{23}$, and along the part of $\CV_{12}$ that
belongs on~$\BV$.
The lack of smoothness along $\mathcal{L}_{\plmin}$
and~$\CV_{13,23}$ allows $\TV^{\prime \prime}$ to be glued
with~$\BV$ in the indicated way.

In fact, locally around~$\mathcal{L}_+$ it seems better to view
the surfaces not as part of $\BV$ and~$\TV^{\prime \prime}$, but
as two smooth surfaces parametrising $\T^2_A(v)$ and~$\T^2_B(v)$,
respectively, that intersect along~$\mathcal{L}_+$ (similar
for~$\mathcal{L}_-$).
In the same way, the surface parametrising~$\T^2_B(v)$ is pierced
through by~$\CV_{12}$, at $(0, \ell^*, 0)$ the short line
$(\mu, \ell, h) = (0, \ell, 0)$ does not detach smoothly.

\begin{remark}
Again the transitional case $\delta = 1$ helps to explain the
structure of the set~$\CV$ of critical values.
When passing through $\delta = 1$ the tetrahedral
surface~$\TV^{\prime \prime}$ with its non-empty intersection
with~$\BV$ shrinks down to a single value where $\CV_{12}$
detaches from~$\BV$.
At $\delta = 1$ this value corresponds to a degenerate
Hamiltonian Hopf bifurcation and the set $\CV$ of critical values
already looks like the one depicted in \fref{cv 1<lambda}.
Note that this does not indicate a different type of degenerate
Hamiltonian Hopf bifurcation --- strongly `tilted' planes have the
same kind of transition when passing through one of the other two
degenerate Hamiltonian Hopf bifurcations.
\end{remark}

\paragraph{Case $\boldsymbol{1 < \delta}$.}
For large positive detuning $\delta > 1$ the intersection of the
plane $\lambda = \delta$ with the amended bifurcation diagram
attains its simplest form, see \fref{cv 1<lambda}(left).
The intersection consists only of the straight half line $\mu = 0$,
$\ell \le -\lambda^2$, which starts at a supercritical Hamiltonian
Hopf bifurcation and extends indefinitely.

The set $\CV$ of critical values depicted in
\fref{cv 1<lambda}(right) consists of the surface~$\BV$ where values
lift to~$\T^2$, except along the curves $\CV_{ij}$, $ij=12,13,23$.
In particular, critical values along
$\mathrm{int}(\CV_{ij} \cap \BV)$, $ij=12,13,23$ (dashed lines
in \fref{cv 1<lambda}) correspond to stable normal modes and
where these three meet we have the elliptic equilibrium.
Critical values on $\CV_{12}^0 = \CV_{12}^+$ (the half line
$(0,\ell,0)$, $\ell<-\lambda^2$) correspond to the Cartesian
product of a~$\T^1$ (the normal $3$--mode) and a pinched torus
(its stable=unstable manifold).
The normal $3$--mode loses its stability in a  supercritical
Hamiltonian Hopf bifurcation where the half line~$\CV_{12}$
detaches from~$\BV$ to become $\CV_{12}^0 = \CV_{12}^+$.

\begin{remark}
For $\kappa = 0$ the fibres of the energy-momentum mapping need
not be compact.
Correspondingly, not only the cases with $\delta \geq \frac{1}{2}$
are lost, but also the interpretation for small detuning
$\delta < \frac{1}{2}$ (i.e.\ the passage through $\delta = 0$)
changes.
What remains are the tetrahedral sets $\TV$ and~$\TV^{\prime}$
shrinking down to the origin $(\mu, \ell, h) = (0, 0, 0)$ as
$\delta \nearrow 0$ and $\delta \searrow 0$, respecitvely.
Regular values $v \in \Reg''$ parametrise $\T^3_B(v)$ that shrink
down to elliptic $\T^2_B(v)$, $v \in \FV_e^{\circ}$ surrounding
the three normal modes and to hyperbolic $\T^2_B(v)$, $v \in \FV_h$
which at~$\DV'$ meet the elliptic~$\T^2$ in quasi-periodic
centre-saddle bifurcations.
When passing through the $1{:}1{:}{-}2$~resonance, $\TV$ shrinks
down to the origin and re-emerges as~$\TV^{\prime}$ flipped
upside-down.

In comparison to this, the `final stage' $\delta > 1$ depicted in
\fref{cv 1<lambda} also has three faces of~$\BV$ parametrising
elliptic~$\T^2$, but the `hyperbolic face' has vanished along
with two of the subcritical Hamiltonian Hopf bifurcations, while
the third Hamiltonian Hopf bifurcation has become supercritical.
In particular, $\kappa \neq 0$ provides not only for compact level
sets $\T^3_A(v)$, $v$ a regular value of~$\EM$, but also for
critical values with minimal energy (maximal energy if $\kappa < 0$).
When $\kappa = 0$ higher order terms in the normal
form~\eqref{symmetricdetunedhamiltonian} are needed to decide
what other invariant sets are parametrised by regular values
of~$\EM$ inside and outside of $\TV$ and~$\TV^{\prime}$.
\end{remark}

\section{Monodromy}
\label{Monodromy}

\noindent
In this section we determine the monodromy for the system defined by
the normalized Hamiltonian~$H^{\delta}_{N, L}$ given
in~\eqref{symmetricdetunedhamiltonian}.
We briefly recall some basic facts about monodromy in integrable
Hamiltonian systems as they apply to the system under study here.

Consider a closed path~$\gamma$ in the set $\Reg$ of regular values
of the energy-momentum mapping~$\EM$.
Then $\EM^{-1}(\gamma)$ is a, possibly non-trivial, $\setT^3$--fibre
bundle over~$\gamma$.
The non-triviality of the bundle can be expressed through the
glueing mapping $\psi : \setT^3 \longrightarrow \setT^3$ of the bundle
over the circle~$\gamma$.
The mapping $\psi$ induces a mapping $\psi_*$ on
$H_1(\setT^3) \simeq \setZ^3$ which, fixing a basis of $H_1(\setT^3)$,
can be written as a matrix of~$\SL(3,\setZ)$
which is called the \emph{monodromy matrix} of the bundle.
The monodromy matrix depends only on the homotopy class~$[\gamma]$ of
a path $\gamma$ in~$\Reg$ and we thus denote the monodromy matrix
by~$M_{[\gamma]}$.
The mapping
\begin{displaymath}
   M \; : \;\; \pi_1(\Reg) \;\; \longrightarrow \;\; \SL(3, \setZ)
\end{displaymath}
that assigns to each homotopy class $[\gamma]$ of~$\Reg$ the
corresponding monodromy matrix~$M_{[\gamma]}$ is called the
\emph{monodromy mapping} of the system.

Note that not the energy-momentum mapping~$\EM$ itself, but its
set~$\Reg$ of regular values is the main ingredient of the monodromy
mapping~$M$.
This allows us to use any diffeomorphism of~$\R^3$ to transform~$\Reg$
into a form suitable for our considerations; in particular we may use
any of the alternatives of $\EM = (N, L, H^{\delta}_{N, L})$
discussed at the beginning of \sref{Critical values}.
Thus, we again replace $H^{\delta}_{N, L}$ by~$\cH_{\lambda}$ as
defined in~\eqref{reducedhamiltonian} but considered as a function
$\cH_{\lambda} : \R^6 \longrightarrow \R$ and now also replace $L$ by
$J = \frac{1}{2}(N+L)$, i.e.\ we work with
\begin{displaymath}
   \EM  \;\; = \;\; (N, J, \cH_{\lambda})
\end{displaymath}
where $\lambda$ depends on the values $\mu$ of~$N$ and $\iota$ of~$J$
through
\begin{displaymath}
   \lambda \;\; = \;\; \delta \; + \; (\lambda_1 - \lambda_2) \mu
   \; + \; 2 \lambda_2 \iota
\end{displaymath}
which replaces~\eqref{linearparameterdependence}.
Because of the existence of the effective global $\T^2$--action~$\Phi$
generated by $\X{N}$ and~$\X{J}$, we choose as a basis for expressing
the monodromy matrix a basis of $H_1(\T^3) \simeq \Z^3$
given by the homology cycles $g_N$, $g_J$ represented by periodic
orbits of $\X{N}$ and $\X{J}$ on~$\T^3$ and completed by any other
homology cycle~$g$ with the property that $(g_N,g_J,g)$ form a basis.

\begin{remark}
  The reason we need to consider here the effective action generated
  by $\X{N}$ and~$\X{J}$, rather than the non-effective action
  generated by $\X{N}$ and~$\X{L}$, manifests here.
  The cycles $g_N$ and~$g_L$ generated by the flows of $\X{N}$
  and~$\X{L}$ cannot be combined with any other cycle to form a basis
  of~$H_1(\T^3)$.
  In particular, $g_J$ cannot be expressed as an integer linear
  combination of $g_N$ and~$g_L$, since the definition of~$J$ implies
  $g_L + g_N = 2 g_J$.
\end{remark}

\noindent
Then the monodromy matrix for~$[\gamma]$ can be written in such basis
as
\begin{displaymath}
  M_{[\gamma]} \;\; = \;\;
  \begin{pmatrix}
    1 & 0 & m^{[\gamma]}_N \\
    0 & 1 & m^{[\gamma]}_J \\
    0 & 0 & 1
  \end{pmatrix}
  \enspace .
\end{displaymath}
The given monodromy matrix signifies that parallel transport of $g_N$
and $g_J$ along $\gamma$ gives $g_N$ and $g_J$ respectively, while
parallel transport of $g$ gives
$g + m^{[\gamma]}_N g_N + m^{[\gamma]}_J g_J$.

\begin{remark}
  The cycle $g$ completing the basis $(g_N,g_J,g)$ is not uniquely
  determined: any cycle $g' = g + k_N g_N + k_J g_J$ also defines a
  basis of~$H_1(\T^3)$ with the same orientation.
  However, different choices of~$g$ do not affect the monodromy matrix
  since, if $g$ is parallel transported along~$\gamma$ to some cycle
  $\hat{g} = g + m^{[\gamma]}_N g_N + m^{[\gamma]}_J g_J$, then $g'$
  is parallel transported to
  $\hat{g}' = g^{\prime} + m^{[\gamma]}_N g_N + m^{[\gamma]}_J g_J$
  (because $g_N$ and $g_J$ do
  \emph{not} change under parallel transport).
\end{remark}

\noindent
We define the \emph{monodromy vector} for $[\gamma]$ by
\begin{displaymath}
   \vec{m}^{[\gamma]} \;\; = \;\; (m^{[\gamma]}_N, m^{[\gamma]}_J)
   \;\; \in \;\; \Z^2 \enspace .
\end{displaymath}
The mapping
\begin{displaymath}
   \vec{m} \; = \; (m_N,m_J) \;\; \mapsto \;\; M(\vec{m}) \; = \;
   \begin{pmatrix}
      1 & 0 & m_N \\
      0 & 1 & m_J \\
      0 & 0 & 1
   \end{pmatrix}
\end{displaymath}
exhibits a group isomorphism between $(\Z^2,+,\vec{0})$ and the
subgroup of $(\SL(3,\Z),\cdot,I)$ that is the image of~$\pi_1(\Reg)$
under the monodromy mapping.
In particular,
\begin{displaymath}
  M(\vec{0}) \; = \; I
  \enspace , \quad
  M(\vec{m}_1) M(\vec{m}_2) \; = \; M(\vec{m}_1 + \vec{m}_2)
  \enspace , \quad
  M(\vec{m})^{-1} \; = \; M(-\vec{m})
  \enspace .
\end{displaymath}
Therefore, the existence of the global $\T^2$--action~$\Phi$ implies
that, even though $\pi_1(\Reg)$ may not be abelian, its image under
the monodromy mapping is an abelian subgroup of~$\SL(3,\Z)$.
Note that if $\pi_1(\Reg)$ is generated by $k$ not necessarily
commuting paths $[\gamma_1], \dots, [\gamma_k]$, then we can expand
\begin{displaymath}
   [\gamma] \;\;\ = \;\;
   \prod_{j=1}^{K} \, \prod_{i=1}^k \; [\gamma_i]^{a_{i,j}}
\end{displaymath}
into $K$~factors, i.e.\ with $\sum_{i=1}^k a_{i,j} = 1$,
$j = 1, \ldots, K$ and the corresponding monodromy
vector~$\vec{m}^{[\gamma]}$ is given by
\begin{displaymath}
   \vec{m}^{[\gamma]} \;\; = \;\; \sum_{i=1}^k a_i \vec{m}^{[\gamma_i]}
   \enspace ,
\end{displaymath}
where $a_i = \sum_{j=1}^K a_{i,j}$.
It is therefore sufficient to compute the monodromy vector for the
generators of~$\pi_1(\Reg)$.
We do this computation in \sref{monodromy computation} below using
the approach of~\cite{Efstathiou2017a}.

\subsection{Computation of monodromy}
\label{monodromy computation}

\noindent
We compute monodromy for the cases for which the sets $\CV$ of
critical values were described in \sref{sets of critical values}.
Recall that the system has always three curves of critical values
parametrising the three normal modes $\CV_{23}$, $\CV_{13}$
and~$\CV_{12}$.
The monodromy of the ramified torus bundle defined by the
energy-momentum mapping~$\EM$ is largely determined by these
three curves and whether parts of them are transversally isolated
in the image of~$\EM$ or whether they are embedded in some
two-dimensional surface~$\BV$ of critical values.
These two possibilities also largely determine~$\pi_1(\Reg)$.
In all cases considered in \sref{sets of critical values}, the
fundamental group~$\pi_1(\Reg)$ is non-trivial although its
structure is not always the same.
Moreover, in some cases monodromy can be meaningfully defined for
loops~$\gamma$ that contain critical values of~$\EM$,
see~\cite{EB13} for more details.

\begin{remark}
  The monodromy of a similar $n$--degree-of-freedom Hamiltonian
  system where $n$~threads of critical values join at the origin
  has been studied in~\cite[Example~1.2]{Gross1998}.
  The monodromy of that system is determined using a convenient
  representation where monodromy acts trivially to $n-2$ cycles
  generating the $\setT^n$--fibre homology.
\end{remark}

\subsubsection{Case $\boldsymbol{\delta = \lambda_1 = \lambda_2 = 0}$}
\label{sec:MonCase1}

\noindent
We first determine the monodromy mapping for the resonant system
without detuning, cf.\ \sref{sec:CVCase1}.
Part of this computation has been given in~\cite{Efstathiou2017a},
with the exception of an overall ``sign'' of the monodromy vector
(which depends on a careful choice of orientations and was outside
the scope of~\cite{Efstathiou2017a}).
In this case $\pi_1(\Reg)$ is isomorphic to the free product
$\setZ * \setZ$ and is generated by two closed paths: one
path~$[\gamma_1]$ encircling the thread~$\CV_{23}^0$ and one
path~$[\gamma_2]$ encircling the thread~$\CV_{13}^0$.
The threads $\CV_{23}^0$ and~$\CV_{13}^0$ are oriented so
that they start at infinity and point to the origin.
Then the paths $[\gamma_1]$ and~$[\gamma_2]$ are oriented so
that they follow the right-hand rule with respect to $\CV_{23}^0$
and~$\CV_{13}^0$, respectively.
Note that $[\gamma_1]$ and~$[\gamma_2]$ do not commute.
We further define $[\gamma_3] = [\gamma_1]^{-1}\cdot[\gamma_2]^{-1}$.
Such a homotopy class is represented by a path that
encircles~$\CV_{12}^0$.
Following the same orientation conventions, $[\gamma_3]$ is positively
oriented.
Since $[\gamma_1] \cdot [\gamma_2] \cdot [\gamma_3] = 1$ we conclude that
\begin{equation}
\label{eq:relation monodromy vectors}
   \vec{m}^{[\gamma_1]} \; + \; \vec{m}^{[\gamma_2]}
   \; + \; \vec{m}^{[\gamma_3]} \;\; = \;\; \vec{0}
   \enspace .
\end{equation}
Given equation~\eqref{eq:relation monodromy vectors}, we here first
compute $\vec{m}^{[\gamma_2]}$ and $\vec{m}^{[\gamma_3]}$, from which
we then deduce~$\vec{m}^{[\gamma_1]}$.

To compute $\vec{m}^{[\gamma_2]}$ we consider a specific
representative of~$[\gamma_2]$ on a plane $N = \mu < 0$.
Using $(J, \cH_{\lambda})$ as co-ordinates on the plane $N = \mu < 0$,
the thread~$\CV_{13}^0$ intersects the plane at
$(J, \cH_{\lambda}) = (0,\frac12\mu^2) =: c_2$.
Then $[\gamma_2]$ can be represented by a circle~$C_2$ that winds once
counterclockwise around $c_2$ with respect to the oriented
co-ordinates~$(J, \cH_{\lambda})$ on the $N = \mu < 0$ plane and which
bounds a disk~$U_2$ on this plane.
The disk~$U_2$ contains one $\Phi$--orbit with non-trivial
isotropy~$\T^1_J$ generated by~$\X{J}$.
Following~\cite{Efstathiou2017a} we note that in the basis
$(\X{N}, \X{J})$ we can write $\X{J} = (0,1)$ and therefore the
corresponding monodromy vector should be
$\vec{m}^{[\gamma_2]} = \pm (0,1)$, where the sign must be determined.
For a positively oriented (counter-clockwise) path on the oriented
$(J, \cH_{\lambda})$--plane the sign is~$+1$ if the $\Phi$--orbit with
non-trivial isotropy is \emph{positive} in the sense
of~\cite{Efstathiou2017a} and is~$-1$ otherwise.
The $\T^1_J$--action acts on the reduced space
\begin{displaymath}
   N^{-1}(\mu<0)\quotient{\T^1_N} \;\; \simeq \;\; \C^2
\end{displaymath}
(where $\T^1_N$ is generated by~$\X{N}$) as
\begin{displaymath}
   (z_1 z_2, z_3) \;\; \mapsto \;\;
   (\ee^{2 \pi i t} z_1 z_2, \ee^{-2 \pi i t} z_3)
   \enspace ,
\end{displaymath}
with complex co-ordinates $z_1 z_2$ and~$z_3$ on $N^{-1}(\mu<0) / \T^1_N$.
We note here that the two co-ordinates $z_1 z_2$ and~$z_3$ define an
orientation that coincides with the one induced by symplectic reduction.
Since $\T^1_J$ has weights $1{:}0{:}{-}1$ the $\Phi$--orbit is positive
(it would have been negative if the weights were $1{:}0{:}1$) and we
conclude that
\begin{displaymath}
   \vec{m}^{[\gamma_2]} \;\; = \;\; (0,1) \enspace .
\end{displaymath}
We can repeat the same argument to compute~$\vec{m}^{[\gamma_3]}$.
The isotropy in this case is~$\T^1_N$ and in the basis
$(\X{N}, \X{J})$ we have $\vec{m}^{[\gamma_3]} = \pm (1,0)$.
The $\T^1_N$--action on the reduced space
$J^{-1}(\iota<0) / \T^1_J$ reads as
\begin{displaymath}
   (z_1 z_3, z_2) \;\; \mapsto \;\;
   (\ee^{2 \pi i t} z_1 z_3, \ee^{-2 \pi i t} z_2)
\end{displaymath}
whence the corresponding $\Phi$--orbit is again positive.
The only difference is that $[\gamma_3]$ is now represented by a
negatively oriented (clockwise) circle~$C_3$ on the plane
$J = \iota < 0$.
Therefore
\begin{displaymath}
   \vec{m}^{[\gamma_3]} \;\; = \;\; (-1,0)
\end{displaymath}
and with~\eqref{eq:relation monodromy vectors} finally
\begin{displaymath}
   \vec{m}^{[\gamma_1]} \;\; = \;\; (1,-1) \enspace .
\end{displaymath}
Note that in these computations of the monodromy vectors the specific
form of the Hamiltonian~$\cH_{\lambda}$ is not used.
This implies that in subsequent cases, where the parameters of the
Hamiltonian change, the monodromy vectors remain the same for paths
$[\gamma_k]$, $k=1,2,3$, provided that such paths can be defined
(see \sref{sec:MonCases345} below).

\subsubsection{Case $\boldsymbol{\delta < 0,\, \lambda_1 = \lambda_2 = 0}$}
\label{sec:MonCase2}

\noindent
In this case $\Reg$ consists of two connected components: one
outside~$\TV$ denoted~$\Reg'$, and one inside~$\TV$ denoted~$\Reg''$.
The fundamental group $\pi_1(\Reg')$ is isomorphic to $\Z * \Z$ and
the whole discussion from the previous subsection can be transferred
almost verbatim here.
Indeed, for closed paths in~$\Reg'$ the tetrahedral surface~$\TV$
together with its interior~$\Reg''$ is indistinguishable from the
mere point value $(\mu, \iota, h) = (0, 0, 0)$ where the three
threads meet when $\delta = \lambda_1 = \lambda_2 = 0$.
The fundamental group~$\pi_1(\Reg'')$ is trivial and thus its
image under the monodromy mapping is the identity.

However, monodromy is also meaningful~\cite{EB13} for paths~$\gamma$
that do not lie completely in $\Reg'$ or~$\Reg''$, but pass from one
connected component to the other through~$\FV_e^\circ$.
Then $\EM^{-1}(\gamma)$ is the disjoint union of a $\T^3$--bundle
for which we can define monodromy and another manifold that can be
ignored.
We use this to extend the monodromy mapping~$M$ to~$\pi_1(\Reg^+)$,
where $\Reg^+$ contains both the interior and the exterior of~$\TV$,
but not the face $\FV_h$ of~$\TV$.
In this extension the `ignorable' manifold does not contribute
to~$M$, reflecting that $\pi_1(\Reg'')$ is trivial.
For example, if $\gamma$ enters and exits~$\Reg''$ through the same
face~$\FV_{\plminul}$, then we can reduce the whole
$\T^2$--action~$\Phi$; the family $\T^3_B(v)$ becomes a cylinder
$\T^1_B(v)$ shrinking to points at the two values
$v \in \FV_{\plminul}$, revealing the reduced ignorable manifold to
be~$\setS^2$ and the ignorable manifold itself to be diffeomorphic to
$\setS^2 \times \T^2$.
If $\gamma$ enters and exits~$\Reg''$ once through different faces
of~$\FV_e^\circ$, then only a $\T^1$--subaction of~$\Phi$ can be
regularly reduced (the normal mode `between' the two faces
of~$\FV_e^\circ$ having non-trivial isotropy) and the relevant
$\T^2_B(v)$ shrink to $\T^1_B(v)$, $v \in \FV_e^\circ$ forming
an~$\setS^3$; the ignorable manifold is diffeomorphic to
$\setS^3 \times \T^1$.

Note that we may also consider paths~$\gamma$ that pass through one
of the parts of the~$\CV_{ij}$ that form the common topological
boundaries of the~$\FV_{\plminul}$.
Indeed, while this may allow for e.g.\ a deformation of
$\setS^2 \times \T^2$ into $\setS^3 \times \T^1$, this manifold is
then ignored anyway.

We therefore let $\Reg^+ := \Reg \cup \FV_e$ and define a monodromy
mapping $M : \pi_1(\Reg^+) \longrightarrow \SL(3,\Z)$
by considering only the monodromy of the $\T^3$--bundle connected
component of $\EM^{-1}(\gamma)$ for $\gamma$ in~$\Reg^+$.
The fundamental group~$\pi_1(\Reg^+)$ is isomorphic to~$\pi_1(\Reg)$
and is also generated by the same paths $[\gamma_1]$ and~$[\gamma_2]$
for which we found in \sref{sec:MonCase1} that
$\vec{m}^{[\gamma_1]} = (1,-1)$ and $\vec{m}^{[\gamma_2]} = (0,1)$.
Therefore, the results for the monodromy mapping
$\pi_1(\Reg) \longrightarrow \SL(3,\Z)$ for the case
$\delta = \lambda_1 = \lambda_2 = 0$ apply without any further
modifications to determine the monodromy mapping
$M : \pi_1(\Reg^+) \longrightarrow \SL(3,\Z)$.
The extension of $M$ from $\pi_1(\Reg')$ to~$\pi_1(\Reg^+)$ means
that we may interpret the three threads (which meet at the single
value $(\mu, \iota, h) = (0, 0, 0)$ for
$\delta = \lambda_1 = \lambda_2 = 0$) as meeting at~$\FV_h$ instead
of meeting at $\TV \cup \Reg''$.

\subsubsection{Case $\boldsymbol{\delta > 0,\, \lambda_1 = \lambda_2 = 0}$}
\label{sec:MonCases345}

\noindent
Following the structure of the discussion in \sref{sec:CVCases345},
we again distinguish the three cases of small, intermediate and
large positive detuning separated by $\delta = \frac{1}{2}$ and
$\delta = 1$.

\paragraph{Case $\boldsymbol{0 < \delta < \frac{1}{2}}$.}
This case is exactly the same as the case $\delta < 0$.
We again define $\Reg^+ = \Reg \cup \FV_e$ with the only difference
being that now $\FV_e$ is the ``lower'' part of~$\TV^{\prime}$.
The fundamental group $\pi_1(\Reg^+)$ remains isomorphic to $\Z * \Z$
and the rest of the discussion goes through without any other changes.

\paragraph{Case $\boldsymbol{\frac{1}{2} < \delta < 1}$.}
Here the set of regular values consists again of two connected
components $\Reg'$ and~$\Reg''$ which are, respectively, outside
and inside~$\TV^{\prime \prime}$.
However, there are two important changes here with respect to
previous cases.
First, $\pi_1(\Reg')$ is isomorphic to~$\Z$.
It is generated by $[\gamma_3]$ which winds once around the thread
of critical values $\CV_{12}$ and for which we computed in
\sref{sec:MonCase1} that the monodromy vector is
$\vec{m}^{[\gamma_3]} = (-1,0)$.

Second, we can no longer consider paths that enter $\Reg''$ through
one of the sides $\FV_\pm$ and exit through~$\FV_0$ or vice versa.
For such paths~$\gamma$, the pre-image~$\EM^{-1}(\gamma)$ does not
contain a $\T^3$--bundle over a circle and therefore we cannot define
monodromy.
However, we can still consider paths that enter and exit $\Reg''$
through the union of $\FV_+$ and~$\FV_-$ with their common topological
boundary, and paths that both enter and exit $\Reg''$ through~$\FV_0$.
Recall that the topological boundary of~$\FV_0$ consists of the
common boundary with~$\FV_h$ and the two curve segments
$\mathcal{L}_{\plmin}$ on~$\BV$.
The space of such paths together with paths that lie entirely
in $\Reg'$ or~$\Reg''$ is generated by~$[\gamma_3]$ and therefore
the corresponding homotopy structure is isomorphic to~$\Z$.

\paragraph{Case $\boldsymbol{1 < \delta}$.}
In this case $\pi_1(\Reg)$ is isomorphic to~$\Z$ and generated
by~$[\gamma_3]$.
The monodromy then is completely determined by the monodromy vector
$\vec{m}^{[\gamma_3]} = (-1,0)$.
One may think of the passage of~$\delta$ through $\delta = 1$ from
$\delta > 1$ to $\delta < 1$ as replacing the value
$(\delta, 0, -\delta^2)$, $\delta > 1$ by
$\FV_h \cup \mathcal{L}_{\plmin}$ conditional on paths~$\gamma$ not
encircling $\mathcal{L}_+$ or $\mathcal{L}_-$ (i.e.\ paths that
enter the interior $\Reg''$ of~$\TV^{\prime \prime}$ through~$\FV_0$
exit $\Reg''$ through~$\FV_0$ as well).

\subsection{Global monodromy}
\label{globalmonodromy}

\noindent
Letting the detuning parameter $\delta = \lambda$ (i.e.\
$\lambda_1 = \lambda_2 = 0$) vary we may consider
$\EM = (N, J, \cH_{\lambda})$ as a mapping
\begin{displaymath}
\begin{array}{ccc}
   \R^6 \times \R^1 & \longrightarrow & \R^3 \times \R^1 \\
   (q, p, \lambda) & \mapsto & (\mu, \iota, h, \lambda)
\end{array} \enspace ,
\end{displaymath}
thereby stacking all $\delta$--values together to let the
parametrised sets of critical values form a single subset of~$\R^4$.
This results in a single monodromy mapping assembling the
$\delta$--family of monodromy mappings.

\begin{remark}
\label{detuningbyaction}
This approach is less theoretical than it seems since in
applications the detuning parameter~$\delta$ may easily arise
as the value of some additional action~$D$, see \sref{Conclusions}.
\end{remark}

\section{Conclusions}
\label{Conclusions}

\noindent
An integrable\footnote{Admitting both the axial symmetry generated
by the third component~$N$ of the angular momentum and the oscillator
symmetry generated by the quadratic part~$L$ of the Hamiltonian.}
Hamiltonian system in three degrees of freedom with an equilibrium
in $1{:}1{:}{-}2$~resonance, has a set~$\CV$ of critical values of
the energy-momentum mapping $\EM = (N, L, H_{N, L}^0)$ of the normal
form~\eqref{symmetricdetunedhamiltonian} depicted in
\fref{fig:cvlambda=0}, with three threads parametrising unstable
normal modes meeting at the value of the equilibrium.
This results in a monodromy mapping
\begin{equation}
\label{undetunedmonodromy}
   \Z * \Z \;\; \longrightarrow \;\; \SL(3,\Z)
\end{equation}
with commutative image isomorphic to~$\Z^2$ spanned by the monodromy
vectors $\vec{m}^{[\gamma_1]} = (1, -1)$ and
$\vec{m}^{[\gamma_2]} = (0, 1)$.
A small amount of detuning leaves~\eqref{undetunedmonodromy}
unchangend, but stabilizes the detuned resonant equilibrium and with
it the three normal modes, deforming the set~$\CV$ of critical values
of~$\EM$ into those depicted in figures \ref{cv lambda<0}
and~\ref{cv 0<lambda<1/2} and thereby turning the monodromy
into island monodromy.
For large detuning, depending on the relative size (and sign) of
third and $4$th order terms in the normal
form~\eqref{symmetricdetunedhamiltonian}, the monodromy mapping
becomes
\begin{equation}
\label{largedetunedmonodromy}
   \Z \;\; \longrightarrow \;\; \SL(3,\Z)
\end{equation}
with $\pi_1(\Reg)$ generated by a loop around the single thread in
\fref{cv 1<lambda}.
The mapping~\eqref{largedetunedmonodromy} describes the (island)
monodromy already for intermediate values of the detuning where the
set~$\CV$ of critical values of~$\EM$ has the more complicated form
depicted in \fref{cv 1/2<lambda<1}.
As discussed in~\cite{rink04, BCFT07} monodromy remains meaningful
for the non-integrable
Hamiltonian~\eqref{originaldetunedhamiltonian} approximated by the
integrable normal form~\eqref{symmetricdetunedhamiltonian} and
persists even under small perturbations that destroy the axial
symmetry.
However, for the latter perturbations the non-degeneracy conditions
mentioned in remark~\ref{lowordernormalinternal} that exclude low
order normal-internal resonances become important.

Our choice to retain in the normal
form~\eqref{symmetricdetunedhamiltonian} next to the third order
term~$X$ also the terms of order~$4$ had the dynamical consequence
that all motions remained bounded --- mostly spinning densely around
invariant $3$--tori --- but furthermore turned the bifurcation
diagram of \fref{fig:bif-kappa-0-3d} into the one depicted in
\fref{fig:3dpictureofbifurcationset}.
This resulted in additional bifurcations --- compare
table~\ref{tbl:bif-kappa-0} with table~\ref{tbl:bifurcations} ---
which lead to the different type~\eqref{largedetunedmonodromy} of
monodromy for large and intermediate detuning.
To actually prove that the phenomena discovered in a normal form can
also be observed in the original system one often uses a scaling
that zooms in on smaller and smaller neighbourhoods of the
equilibrium.
In the present situation this would make the $4$th order terms
smaller and smaller and correspondingly already intermediate detuning
larger and larger, see~\eqref{kappa=1 scaling}.
In the similar situation of periodic orbits in normal-internal
resonance the semi-global approach in~\cite{hh07} to the
$1{:}3$~resonance reveals the hyperbolic $3$--periodic orbit that
causes the transitional instability of the initial periodic orbit
at the $1{:}3$~resonance to undergo a periodic centre-saddle
bifurcation for a parameter value nearby the $1{:}3$~resonance;
a phenomenon observed in many applications.
We therefore expect that also the cusp and supercritical
Hamiltonian Hopf bifurcations emanating from the degenerate
Hamiltonian Hopf bifurcations do accompany $1{:}1{:}{-}2$~resonances
where these occur.

In order to interpret the `external' detuning as an internal parameter
one can study periodic orbits in four degrees of freedom instead of
equilibria in three degrees of freedom.
Indeed, while the latter are generically isolated the former form
$1$--parameter families, parametrised by the action~$D$ conjugate to
the angle along the periodic orbit (for non-zero Floquet exponents
one may even parametrise by the energy).
Let us impose axial symmetry and assume that the Floquet exponents
encounter a $1{:}1{:}{-}2$~resonance.
Then it is generic for the value $\delta$ of~$D$ to detune the
resonance.
Normalizing with respect to the periodic motion --- possible under
non-degeneracy conditions that exclude normal-internal resonances
between the normal frequencies and the period --- then allows to
reduce the $\T^1$--action generated by~$D$ with reduced Hamiltonian
in three degrees of freedom of the
form~\eqref{originaldetunedhamiltonian}.
See also remark~\ref{detuningbyaction} in \sref{globalmonodromy}.

One should keep in mind that even in axially symmetric Hamiltonian
systems it is {\em not} generic for the $1$--parameter families of
periodic orbits to encounter a normal $1{:}1{:}{-}2$~resonance.
The reason is that adding $\beta N$
to~\eqref{originaldetunedhamiltonian} detunes the
$1{:}1$~subresonance, see remark~\ref{addingamultiple}.
To account for this second parameter one could study invariant
$2$--tori (with their two actions $D$ and~$B$ conjugate to the toral
angles acting as two internal parameters) in an integrable system of
five degrees of freedom and then reduce the $\T^2$--symmetry along
the $\T^2$--tori to three degrees of freedom, resulting in a
(relative) equilibrium in $1{:}1{:}{-}2$~resonance.
Hoewver, when breaking the symmetries of the integrable system
(the above $\T^2$--symmetry, the $\T^1$--symmetry generated by~$L$
and the axial symmetry generated by~$N$) the $2$--parameter family
of invariant $2$--tori needs {\sc kam}~theory to persist and a
single torus in normal $1{:}1{:}{-}2$~resonance may disappear in
a resonance gap (opened by the necessary Diophantine conditions
on the two internal frequencies).

It is only in six (or more) degrees of freedom that families of
invariant lower dimensional tori with three (or more) internal
frequencies may encounter a normal $n_1{:}n_2{:}n_3$~resonance in
such a way that the normally resonant tori even after perturbation
away from integrability form a non-empty Cantor family parametrised
by a Cantor set of dimension one (or more).
In this way the detuning does become one of the internal parameters
and the phenomena of the previous sections do persistently occur
in six or more degrees of freedom.
For the $1{:}1{:}{-}2$~resonance the occurence co-dimension
increases because of the $1{:}1$~subresonance, again see
remark~\ref{addingamultiple}, and one needs at least eight
degrees of freedom.
The non-integrable $1{:}1{:}{-}2$~resonance, just like its definite
counterpart, is a rather degenerate phenomenon.

\appendix

\section{Proof of Proposition~\ref{description of bifurcation set}}
\label{proof of bifurcation set}

\noindent
While the proposition is formulated for $\kappa = 1$, the value
actually used in figures \ref{fig:3dpictureofbifurcationset}
and~\ref{fig:bif01} and in table~\ref{tbl:bifurcations}, we give
the proof here for general $\kappa \neq 0$ to provide for
complete formulas.
Lemma~\ref{lemma bd char} allows us to compute the bifurcation
diagram of the system via the triple roots of $F(R)$ that lie in
$[R_{\min},\infty[$.
We obtain such roots by factorizing $F(R)$ as
\begin{equation}
\label{factorizing}
   F(R) \;\; = \;\; \frac{\kappa^2}{4} (R - a)^3 (R - b)
   \enspace , 
\end{equation}
where $a$ is the sought out triple root and $b \in \setR$ is the
remaining root of~$F(R)$.
Comparing coefficients of powers of~$R$ in the two expressions
\eqref{F(R)} and~\eqref{factorizing} for~$F(R)$ we obtain the relations
\begin{subequations}
\label{coefficients-triple-roots}
\begin{align}
   - \kappa^2 a^3 b \; + \; 4 h^2 \; - \; 4 \mu^2 \ell
   & \;\; = \;\; 0 \label{coefficients-triple-roots:a}\\
   \kappa^2 a^3 \; + \; 3 \kappa^2 a^2 b \; - \; 8 h \lambda \; + \; 4 \mu^2
   & \;\; = \;\; 0 \label{coefficients-triple-roots:b}\\
   - 3 \kappa^2 a^2  \; - \; 3 \kappa^2 a b \; - \; 4 \kappa h \; + \;
   4 \lambda ^2 \; + \; 4 \ell
   & \;\; = \;\; 0 \label{coefficients-triple-roots:c}\\
   3 \kappa^2 a  \; + \; \kappa^2 b \; + \; 4 \kappa \lambda \; - \; 4
   & \;\; = \;\; 0 \label{coefficients-triple-roots:d} \enspace .
\end{align}
\end{subequations}
Solving
\eqref{coefficients-triple-roots:b}--\eqref{coefficients-triple-roots:d}
for $b$, $\ell$ and $h$ we find
\begin{subequations}
\label{solved-triple-roots}
\begin{align}
   b & \;\; = \;\; 4\kappa^{-2} \; - \; 3 a \; - \; 4 \kappa^{-1} \lambda
\label{solved-triple-roots:a}\\
   2 \lambda \ell & \;\; = \;\; - 2 \kappa^3 a^3 \; - \; 6 \kappa^2 a^2 \lambda
   \; + \; 3 \kappa a^2 \; - \; 6 \kappa a  \lambda^2 \; + \; 6 a \lambda
   \; - \; 2 \lambda ^3 \; + \; \kappa  \mu ^2
\label{solved-triple-roots:b}\\
   2 \lambda h & \;\; = \;\; \mu^2 \; + \; 3 a^2 \; - \; 2 \kappa^2 a^3
   \; - 3 \; \kappa a^2 \lambda
\label{solved-triple-roots:c}
\end{align}
\end{subequations}
which for $\lambda \neq 0$ yields an explicit parametrisation by
$\lambda$ and~$a$.
The remaining equation~\eqref{coefficients-triple-roots:a} becomes
\begin{displaymath}
   \frac{1}{\lambda^2} Q(\mu) \;\; = \;\; 0 \enspace ,
\end{displaymath}
where
\begin{eqnarray*}
   Q(\mu) & = & \left( 1 - 2 \kappa \lambda \right) \mu^4  \\
   & & \!\!\!\! \mbox{} + \; \left( 4 \kappa^3 a^3 \lambda - 4 \kappa^2 a^3
   + 12 \kappa^2 a^2 \lambda^2 - 12 \kappa a^2 \lambda + 6 a^2
   + 12 \kappa a \lambda^3 - 12 a \lambda ^2 + 4 \lambda^4 \right) \mu^2  \\
   & & \!\!\!\! \mbox{} + \; \left( 4 \kappa^4 a^6 + 12 \kappa^3 a^5 \lambda
   - 12 \kappa^2  a^5 + 12 \kappa^2  a^4 \lambda ^2 - 18 \kappa  a^4 \lambda
   + 9 a^4 + 4 \kappa  a^3 \lambda^3 - 4 a^3 \lambda^2 \right)
\end{eqnarray*}
is quadratic in $\mu^2$ for $\lambda \ne \Frac{1}{2\kappa}$.
\vspace{-5pt}
This makes $\lambda = \Frac{1}{2\kappa}$ a special case, next to
$\lambda = 0$.
Below we shall first check these two special cases before treating
\vspace{-5pt}
the general cases $\lambda < 0$, $0 < \lambda < \Frac{1}{2\kappa}$
and $\lambda > \Frac{1}{2\kappa}$.

More degenerate than a triple root of~$F$ is having a quadruple root,
i.e.\ $b=a$ in~\eqref{factorizing}.
From~\eqref{solved-triple-roots:a} then follows
\begin{equation}
\label{quadrupleroot}
   a \;\; = \;\; \frac{1}{\kappa^2} \; - \; \frac{\lambda}{\kappa}
   \enspace .
\end{equation}
As~\eqref{F(R)} is a polynomial of degree~$4$ we have
$F^{(4)} \equiv 6 \kappa^2$ and in particular $F^{(4)}(a) \neq 0$
--- the quadruple root~\eqref{quadrupleroot} is not of order~$5$
or higher.
In \sref{degenerateHHb} we have seen that for $a = R_{\min}$ this
yields the degenerate Hamiltonian Hopf bifurcations at
\begin{displaymath}
   \left( \lambda, \mu, \ell \right) \; = \; \left( \frac{1}{2 \kappa},
   \frac{\pm 1}{2 \kappa^2}, \frac{1}{2 \kappa^2} \right)
   \quad \mbox{and} \quad
   \left( \lambda, \mu, \ell \right) \; = \;
   \left( \frac{1}{\kappa}, 0, \frac{-1}{\kappa^2} \right)
\end{displaymath}
and from lemma~\ref{lemma bd char} we conclude that for
$a > R_{\min}$ this yields cusp bifurcations.

\subsection{Special cases}

\noindent
We first check the special cases $\lambda = 0$ and
$\lambda = \Frac{1}{2\kappa}$.

\paragraph{Case $\boldsymbol{\lambda = 0}$.}
Setting $\lambda = 0$ in~\eqref{solved-triple-roots} gives
\begin{displaymath}
   b \; = \; \frac{4}{\kappa^2} \, - \, 3 a
   \quad \mbox{and} \quad
   \mu^2 \; = \; (2 \kappa^2 a - 3) a^2
   \enspace .
\end{displaymath}
Inserting this in~\eqref{coefficients-triple-roots} we obtain the
quadratic equation
\begin{equation}
\label{quadraticinell}
   \ell^2 \; + \;
   \left( -2 \kappa^4 a^3 + 6 \kappa^2 a^2 - 6a \right) \ell
   \; + \; \left( 3 \kappa^4 a^4 - 10 \kappa^2 a^3 + 9 a^2 \right)
   \;\; = \;\; 0
\end{equation}
in~$\ell$ with discriminant $a^3 (\kappa^2 a - 2)^3$.
The condition $a \geq R_{\min} \geq |\mu|$ implies
$a^2 \ge \mu^2$ and $a \ge 0$, and gives
\begin{displaymath}
   a^2 (2 - \kappa^2 a) \ge 0 \enspace ,
\end{displaymath}
that is $0 \le a \le 2\kappa^{-2}$.
For these values~\eqref{quadraticinell} has negative discriminant
and thus non-real roots except for the end points $a=0$ and
$a = 2 \kappa^{-2}$ of the interval.
For $a = 0$ we find $\mu = \ell = a = 0$, recovering the equilibrium
in $1{:}1{:}{-}2$~resonance with values
$(\lambda, \mu, \ell) = (0, 0, 0)$.
While this is what makes $\lambda = 0$ special, for
$a = 2 \kappa^{-2}$ we find $\pm \mu = \ell = a = 2\kappa^{-2}$,
corresponding to a supercritical Hamiltonian Hopf bifurcation.
These are not special, but belong to the $1$--parameter families of
supercritical Hamiltonian Hopf bifurcations \HHsup{1} and~\HHsup{2},
see below.

\paragraph{Case $\boldsymbol{\lambda = \Frac{1}{2\kappa}}$.}
Here the equation $Q(\mu) = 0$ becomes linear in~$\mu^2$ and in
particular it factorizes to
\begin{displaymath}
   (2 \kappa^2 a - 1)^3 (\mu^2 - 2 \kappa^2 a^3) \;\; = \;\; 0
\end{displaymath}
whence $a = \Frac{1}{2\kappa^2}$ or $\mu^2 = 2 \kappa^2 a^3$.
\vspace{-5pt}
Where both equations are satisfied we recover the two degenerate
Hamiltonian Hopf bifurcations \HHdeg{1} and~\HHdeg{2}.
For $a = \Frac{1}{2\kappa^2}$ we have
\begin{displaymath}
  b \; = \; \frac{1}{2\kappa^2} \; = \; a \enspace ,
  \quad \ell \; = \; \frac{1}{4\kappa^2} \, + \, \kappa^2 \mu^2
  \quad \mbox{and} \quad
  h \; = \; \frac{1}{8\kappa^3} \, + \, \kappa \mu^2
\end{displaymath}
where the parametrisation of $\ell$ and~$h$ by~$\mu$ is restricted
by $|\mu| \leq \Frac{1}{2\kappa^2}$, as obtained from
$a \geq R_{\min} \geq \max(|\mu|, \ell)$.
Therefore
\begin{displaymath}
   (\lambda, \mu, \ell) \;\; = \;\; \left( \frac{1}{2 \kappa},\,
   \mu,\, \frac{1}{4 \kappa^2} + \kappa^2 \mu^2 \right)
   \enspace , \quad
   |\mu| \; \le \; \frac{1}{2 \kappa^2}
\end{displaymath}
parametrises a $1$--parameter family of cusp bifurcations that we
denote by~\Cusp{3} and which extends between \HHdeg{1} and~\HHdeg{2}.
While this is what makes $\lambda = \Frac{1}{2\kappa}$ special, for
$\mu^2 = 2 \kappa^2 a^3$ we have 
\begin{displaymath}
   b \; = \; -3 a \, + \, \frac{2}{\kappa^2} \enspace ,
   \quad \ell \; = \; \frac{6 \kappa^2 a - 1}{4 \kappa^2}
   \quad \mbox{and} \quad
   h \; = \; \frac{3 \kappa a^2}{2} \enspace .
\end{displaymath}
Since $a^2 \ge \mu^2 = 2 \kappa^2 a^3$ and $a \ge 0$ we find
$0 \le a \le \Frac{1}{2\kappa^2}$ (the second inequality also
\vspace{-3pt}
following from $a \ge \ell$) where $a = \pm \mu$ gives the end
points $a = 0$ and $a = \Frac{1}{2\kappa^2}$.
\vspace{-3pt}
We have already seen that the right end point
$a = \Frac{1}{2\kappa^2}$ yields the two degenerate Hamiltonian
\vspace{-3pt}
Hopf bifurcations \HHdeg{1} and~\HHdeg{2}.
The left end point $a = 0$ yields
$(\lambda, \mu, \ell) = (\Frac{1}{2\kappa}, 0, \Frac{-1}{4\kappa^2})$
which belongs to the family~\HHsub{3} of subcritical Hamiltonian
Hopf bifurcations, see below.
In between the parametrisation
\begin{displaymath}
   (\lambda, \mu, \ell) \;\; = \;\; \left( \frac{1}{2\kappa},\,
   \pm \sqrt{2 \kappa^2 a^3},\, \frac{6\kappa^2 a-1}{4\kappa^2}
   \right) \enspace , \quad
   0 \; < \; a \; < \; \frac{1}{2\kappa^2}
\end{displaymath}
yields centre-saddle bifurcations which turn out to belong to the
$2$--parameter families of centre-saddle bifurcations \CS{1}
and~\CS{2}, see again below.

\subsection{General case:
$\boldsymbol{\lambda \neq 0}$,
$\boldsymbol{\Frac{1}{2\kappa}}$}

\noindent
Solving the quadratic equation $Q(\mu) = 0$ for $\mu^2$ we find the
two solutions
\begin{subequations}
\label{general mu and ell}
\begin{align}
   \mu_\pm^2 & \;\; = \;\; \frac{\pm 2  |\lambda|}{2 \kappa \lambda - 1}
   \, \bigl[\, (\kappa a + \lambda)^2 - 2a \,\bigr]^{3/2}
\label{general mu}\\
   & \;\;\;\; \mbox{} + \frac{2 \kappa^3 a^3 \lambda - 2 \kappa^2 a^3
   + 6 \kappa^2 a^2 \lambda^2 - 6 \kappa a^2 \lambda + 3 a^2
   + 6 \kappa a \lambda^3 - 6 a \lambda^2 + 2 \lambda^4}{2 \kappa \lambda - 1}
\nonumber\\
\intertext{and}
   2 \lambda \ell_\pm & \;\; = \;\; - 2 \kappa^3 a^3 \; - \;
   6 \kappa^2 a^2 \lambda \; + \; 3 \kappa a^2 \; - \; 6 \kappa a \lambda^2
   \; + \; 6 a \lambda \; - \; 2 \lambda ^3 \; + \; \kappa \mu_\pm^2
   \enspace .
\label{general ell}
\end{align}
\end{subequations}
These solutions are real provided that the discriminant
$16 \lambda^2 [(\kappa a + \lambda)^2 - 2 a]^3$ of $Q(\mu)$,
the latter seen as a quadratic polynomial in $\mu^2$, is
non-negative.
While this is always true for $\lambda > \Frac{1}{2\kappa}$,
this gives for $\lambda < \Frac{1}{2\kappa}$ the sub-cases
\begin{displaymath}
   0 \; \le \; \kappa^2 a \; \le \; 1 \, - \, \kappa \lambda \, - \,
   \sqrt{1 - 2 \kappa \lambda}
   \quad \text{and} \quad
   \kappa^2 a \; \ge \; 1 \, - \, \kappa \lambda \, + \,
   \sqrt{1 - 2 \kappa \lambda}
   \enspace .
\end{displaymath}

\paragraph{Case $\boldsymbol{\lambda > \Frac{1}{2\kappa}}$.}
Here, the condition $a \ge \ell_+$ is not satisfied for any $a \ge 0$.
Therefore, the solutions $\mu_+$, $\ell_+$
in~\eqref{general mu and ell} can be rejected.

We have checked with Mathematica that the condition
$a \ge \ell_-$ is satisfied for all $a \ge 0$ and 
that $a^2 \ge \mu_-^2$ is also true.
We note that the condition $\mu_-^2 \ge 0$ gives
\begin{displaymath}
   a \, g(a) \;\; \le \;\; 0
\end{displaymath}
with
\begin{equation}
\label{g(a)}
   g(a) \;\; = \;\; 4 \kappa^4 a^3 \; + \;
   \left( 12 \kappa^3 \lambda -12 \kappa^2 \right) a^2 \; + \;
   \left( 12 \kappa^2 \lambda^2 - 18 \kappa \lambda + 9 \right) a
   \; + \; \left( 4 \kappa \lambda^3 - 4 \lambda^2 \right)
   \enspace .
\end{equation}
For $a = 0$ the inequality $a \, g(a) \le 0$ is satisfied for any
value of $g(a)$.
If $a > 0$ then we require that $g(a) \le 0$ and check that
$g(0) = 4 \kappa^2 \lambda^2 (\kappa \lambda - 1)$ and that
$g(a)$ has two extrema at strictly negative values of~$a$.
This means that for $\Frac{1}{2\kappa} < \lambda < \Frac{1}{\kappa}$
the cubic equation $g(a) = 0$ has a unique positive root
$a_0(\lambda)$ and thus $g(a) \le 0$ for $0 \le a \le a_0(\lambda)$.
This yields centre-saddle bifurcations at the triple roots
$a > R_{\min}$ while for $\kappa^2 a = 1 - \kappa \lambda$ we have
$b=a$ and get cusp bifurcations (recall that the non-degeneracy
condition $F^{(4)}(a) \neq 0$ is always true in our system).

We have $0 \le 1 - \kappa \lambda \le \kappa^2 a_0(\lambda)$,
therefore the bifurcations are split to several $2$--parameter
families of centre-saddle bifurcations separated by cusp
bifurcations.
The centre-saddle bifurcations are the family~\CS{4} parametrised by
\begin{displaymath}
   \frac{1}{2\kappa} \; < \; \lambda \; < \; \frac{1}{\kappa}
   \enspace , \quad
   \mu \; = \; \pm \mu_-
   \enspace , \quad
   \ell \; = \; \ell_-
   \enspace , \quad
   1 - \kappa \lambda \; < \; \kappa^2 a \; < \; \kappa^2 a_0(\lambda)
   \enspace .
\end{displaymath}
Then, a part of the family~\CS{1} is parametrised by
\begin{displaymath}
  \frac{1}{2\kappa} \; < \; \lambda \; < \; \frac{1}{\kappa}
   \enspace , \quad
   \mu \; = \; \mu_-
   \enspace , \quad
   \ell \; = \; \ell_-
   \enspace , \quad
   0 \; < \; \kappa^2a \; < \; 1 - \kappa \lambda
\end{displaymath}
and a part of \CS{2} is parametrised by
\begin{displaymath}
   \frac{1}{2\kappa} \; < \; \lambda \; < \; \frac{1}{\kappa}
   \enspace , \quad
   \mu \; = \; -\mu_-
   \enspace , \quad
   \ell \; = \; \ell_-
   \enspace , \quad
   0 \; < \; \kappa^2a \; < \; 1 - \kappa \lambda
   \enspace .
\end{displaymath}
There are two families of cusp bifurcations denoted by \Cusp{1}
and~\Cusp{2}.
They can be obtained from~\CS{4} by setting
$\kappa^2 a = 1 - \kappa \lambda$.
This gives for~\Cusp{1} that
\begin{displaymath}
   \frac{1}{2\kappa} \; < \; \lambda \; < \; \frac{1}{\kappa}
   \enspace , \quad
   \mu \; = \; \mu_- \; = \;
   \kappa^{-2}(\kappa \lambda \, - \, \sqrt{2 \kappa \lambda - 1})
   \enspace , \quad
   \ell_- \; = \; 1 \, - \, \kappa \lambda \, - \,
   \sqrt{2 \kappa \lambda - 1}
\end{displaymath}
and for~\Cusp{2} the same parametrisation up to $\mu = - \mu_-$.

For $\lambda \ge \kappa^{-1}$ we have $g(a) > 0$ for all $a > 0$
and therefore the only possibility that is left is $a = 0$.
Subsequently, for $\lambda \ge \kappa^{-1}$, we obtain by
substituting $a = 0$ that
\begin{displaymath}
   \mu_{\pm}^2 \; = \; 0
   \quad \mbox{and} \quad
   \ell_{\pm} \; = \; - \lambda^2
   \enspace .
\end{displaymath}
So, for $\lambda > \kappa^{-1}$ we have the family of supercritical
Hamiltonian Hopf bifurcations parametrised by
$(\lambda, \mu, \ell) = (\lambda, 0, -\lambda^2)$ and denoted
by~\HHsup{3}.
This can be checked using the derivative
$F'''(a=0) = 6(\kappa \lambda - 1) > 0$.

\paragraph{Case $\boldsymbol{\lambda < \Frac{1}{2\kappa},\, \lambda \neq 0}$.}
The condition $a \ge \ell_-$, together with $a \ge 0$ and
$a^2 \ge \mu_-^2 \ge 0$, gives 
\begin{displaymath}
   \kappa^2 a \;\; \geq \;\; 1 \; - \; \kappa \lambda \; + \;
   \sqrt{1 - 2 \kappa \lambda}
\end{displaymath}
or
\begin{displaymath}
   \kappa^2 a_0(\lambda) \;\; \le \;\; \kappa^2 a \;\; \le \;\;
   1 \; - \; \kappa \lambda \; - \; \sqrt{1 - 2 \kappa \lambda}
   \enspace , \quad
   \lambda \; < \; 0
\end{displaymath}
and
\begin{displaymath}
   0 \;\; \le \;\; \kappa^2 a \;\; \le \;\;
   1 \; - \; \kappa \lambda \; - \; \sqrt{1 - 2 \kappa \lambda}
   \enspace , \quad
   \lambda \; > \; 0
   \enspace .
\end{displaymath}
Here $a_0(\lambda)$ is the unique real root of~\eqref{g(a)}.
This parametrises the part of the family~\CS{1} with $\lambda < \frac{1}{2}$
for $\mu = \mu_-$ and the corresponding part of~\CS{2} for $\mu = -\mu_-$.
The condition $a \ge \ell_+$, together with $a \ge 0$ and
$a^2 \ge \mu_+^2 \ge 0$, gives
\begin{displaymath}
   \kappa^2 a \;\; \geq \;\; 1 \; - \; \kappa \lambda \; + \;
   \sqrt{1 - 2 \kappa \lambda}
\end{displaymath}
or
\begin{displaymath}
   0 \;\; \le \;\; \kappa^2 a \;\; \le \;\;
   1 \; - \; \kappa \lambda \; - \; \sqrt{1 - 2 \kappa \lambda}
   \enspace , \quad
   \lambda \; < \; 0
\end{displaymath}
and
\begin{displaymath}
   \kappa^2 a_0(\lambda) \;\; \le \;\; \kappa^2 a \;\; \le \;\;
   1 \; - \; \kappa \lambda \; - \; \sqrt{1 - 2 \kappa \lambda}
   \enspace , \quad
   \lambda \; > \; 0
   \enspace .
\end{displaymath}
This parametrises the family~\CS{3} for $\mu = \pm \mu_+$. 
Note that for
$\kappa^2 a = 1 - \kappa \lambda - \sqrt{1 - 2 \kappa \lambda}$
we have $\mu_+^2 = \mu_-^2 = a^2$ and $\ell_+ = \ell_- = a$.
Therefore we obtain two curves parametrising subcritical Hamiltonian
Hopf bifurcations since here
$F'''(a) = - 6 \sqrt{1-2\kappa\lambda} < 0$.
The two curves are \HHsub{1} and~\HHsub{2}.
For $\kappa^2 a = 1 - \kappa \lambda + \sqrt{1 - 2 \kappa \lambda}$
we obtain the two curves
\begin{displaymath}
   \ell \; = \; a
   \enspace , \quad
   \mu \; = \; \pm a
   \enspace , \quad
   \lambda \; = \; \pm \sqrt{2a} - \kappa a
   \enspace , \quad
   a \; \ge \; 0
\end{displaymath}
which can alternatively be written as
\begin{displaymath}
   \ell \; = \; \kappa^{-2}
   (1 \, - \, \kappa \lambda \, + \, \sqrt{1 - 2 \kappa \lambda})
   \enspace , \quad
   \mu \; = \; \pm \ell
   \enspace , \quad
   \lambda \; < \; \frac{1}{2 \kappa}
   \enspace .
\end{displaymath}
These two curves parametrise families of supercritical Hamiltonian
Hopf bifurcations since here $F'''(a) = 6 \sqrt{1-2\kappa\lambda} > 0$.
The curves are \HHsup{1} and~\HHsup{2}.

Note that for $\lambda = 0$ these expressions give
$\ell = 2\kappa^{-2}$, $\mu = \pm 2\kappa^{-2}$
which are two of the values we already identified for $\lambda=0$.
Considering now the constraint
$0 \le \kappa^2 a \le 1 - \kappa \lambda - \sqrt{1 - 2 \kappa \lambda}$
for $\lambda = 0$ we find $a = 0$ and thus $\ell = \mu = 0$.
This is the third value we identified for $\lambda = 0$.
Therefore, we can extend the parametrisation for
$\lambda < \Frac{1}{2\kappa}$ to include the case $\lambda = 0$.

\section*{Acknowledgment}
A.M. was supported by INdAM (Istituto Nazionale di Alta Matematica
"F. Severi") and  by the Grant Agency of the Czech Republic,
project~17-11805S.

\end{document}